\documentclass[12pt,leqno]{article}
\usepackage{amssymb,amsfonts,amsmath,amsthm,amscd,mathrsfs}
\usepackage{MnSymbol}
\usepackage{PSFigure,psfrag}
\usepackage{graphicx}
\usepackage{centernot}
\usepackage{stmaryrd}

\def\IE{{\mathbb E}}

\def\IP{{\mathbb P}}
\def\IR{{\mathbb R}}

\def\IT{{\mathbb T}}

\def\IZ{{\mathbb Z}}
\def\IV{{\mathbb V}}

\def\IX{{\mathbb X}}

\def\n{\noindent}

\def\dis{\displaystyle}

\def\fr{\mbox{\footnotesize $\dis\frac{1}{2}$}}

\def\ov{\overline}
\def\ve{\varepsilon}
\def\f{\footnotesize}
\def\r{\rightarrow}
\def\point{{\mbox{\large $.$}}}
\def\wh{\widehat}
\def\wt{\widetilde}

\def\Le{\wt{\cL}\,\!^\ve}
\def\Pe{\wt{\IP}\,\!^\ve}
\def\Pen{\wt{\IP}\,\!^{\ve_n}}
\def\Ee{\wt{\IE}\,\!^\ve}
\def\Een{\wt{\IE}\,\!^{\ve_n}}

\def\cD{{\cal D}}

\def\cL{{\cal L}}

\def\cJ{{\cal J}}

\def\cF{{\cal F}}

\def\cG{{\cal G}}

\dimendef\dimen=0

\newtheorem{theorem}{Theorem}[section]
\newtheorem{lemma}[theorem]{Lemma}
\newtheorem{corollary}[theorem]{Corollary}
\newtheorem{proposition}[theorem]{Proposition}
\newtheorem{remark}[theorem]{Remark}

\begin{document}

\noindent
~

\bigskip
\begin{center}
{\bf AN INVISCID LIMIT TO AN EFFECTIVE ENERGY-ENSTROPHY DIFFUSION PROCESS}
\end{center}

\begin{center}
Alain-Sol Sznitman$^1$ and Klaus Widmayer$^2$
\\[2ex]
Preliminary Draft
\end{center}

\begin{abstract}
In this article we consider a stationary $N$-dimensional Galerkin-Navier-Stokes type evolution with Brownian forcing and random stirring (of arbitrarily small strength that plays the role of a regularization). We show, as a ``proof of concept'', that the stationary diffusion in an open two-dimensional cone constructed in a companion article, stands as the inviscid limit of the laws of the ``enstrophy-energy'' process of the $N$-dimensional diffusion process considered here, this regardless of the strength of the stirring. With the help of 
the quantitative condensation bounds of the companion article, we infer quantitative inviscid condensation bounds, which for suitable forcings show an attrition of all but the lowest modes in the inviscid limit.

\end{abstract}

\vfill 
\hfill July 2026
\vfill
\noindent
{\footnotesize
-------------------------------- \\
$^1\,$Department of Mathematics, ETH Zurich, CH-8092 Zurich, Switzerland\\
$^2\,$Institute of Mathematics, University of Zurich, CH-8057 Zurich, Switzerland and Faculty of 

\vspace{-0.5ex}
\noindent
~~\!Mathematics, University of Vienna, A-1090 Vienna, Austria
}


\newpage
\thispagestyle{empty}
~

\newpage
\setcounter{page}{1}

\setcounter{section}{-1}
\section{Introduction}
While the existence and uniqueness theory for stationary distributions of the two-dimen\-sional incompressible Navier-Stokes equation with Brownian forcing on a two-dimensional square or thin torus is by now a fairly mature subject, see for instance \cite{HairMatt06}, \cite{HairMatt11}, \cite{KuksShir12}, the nature of the stationary measures, which typically are irreversible in the cases of interest, remains poorly understood. When an inviscid limit with a proper balancing of forces is considered, little information is known about the limit behavior, see \cite{GlatSverVico15}, \cite{Kuks08}, and Chapter 5 of \cite{KuksShir12}. These features persist when finite dimensional evolutions corresponding to Galerkin-Navier-Stokes are instead considered. For instance, the known lower bounds on the expectation of the energy are presently blind to the inviscid limit procedure, and the expected trend that under suitable forcings, in the inviscid limit, the stationary distributions condense on the low Fourier modes, is poorly understood, see \cite{BeckCoopLordSpil20}, \cite{BoucSimo09}, (4.2) in \cite{LinkHohmEckh20}, \cite{Naza11} on p.~127.

\medskip
Gaussian measure has been instrumental in several aspects of related research topics. It has been a building block for invariant measures corresponding to very specific forcings, and for the construction of generalized solutions, see \cite{DaprDebu02}, and \cite{HairRosa24} for more recent developments. Also, in a large dimension (and deviation) perspective, it has highlighted a condensation mechanism on low modes in the context of constrained enstrophy and energy regimes, see Section 3.1 of \cite{BoucCorv10}. In the companion article \cite{SzniWidm26a}, Gaussian measure plays a leading role as well. We use it as a tool to devise the drift and diffusion matrix of a diffusion process in a two-dimensional cone. We then establish well-posedness, uniqueness of the stationary measure, as well as certain remarkable condensation bounds for this stationary measure. In the present work, as a ``proof of concept'', to highlight the role of the above two-dimensional process, we show that it is the inviscid limit of the law of the ``enstrophy-energy'' process of a stationary $N$-dimensional Galerkin-Navier-Stokes type evolution with Brownian forcing and random stirring (of arbitrarily small strength, and that plays the role of a regularization). With the help of the condensation bound that we prove in \cite{SzniWidm26a}, we deduce an {\it inviscid condensation bound}, which for instance highlights a ``transfer to low modes'' in the case of suitable forcings.

\medskip
We will now discuss the results in more detail.

\medskip
We refer to Section 1 for precise assumptions. We consider $\IR^N$ with $N = 2n$, $n \ge 4$, as well as two quadratic forms $|x|^2 = \Sigma_\ell \, x_\ell^2$ and $|x|^2_{-1} = \Sigma_\ell \, x_\ell^2 / \lambda_\ell$, where $\lambda_{2i} = \lambda_{2i - 1}$ (denoted by $\mu_i$), $1 \le i \le n$, is increasing and $\mu_1 = 1$. The first quadratic form corresponds to (twice) the enstrophy and the second quadratic form to (twice) the enstrophy in the Galerkin perspective explained in Remark \ref{rem1.1}. We also have a Gaussian measure $\mu$ on $\IR^N$ under which the coordinate functions $x_\ell$, $1\le \ell \le N$, are independent centered Gaussian variables with variance equal to $a/2$, with $a > 0$.

\medskip
The main object of study is a certain stationary diffusion process on $\IR^N$, which depends on a small parameter $\ve > 0$. Its generator is given by
\begin{equation}\label{0.1}
\wt{\cL}\,\!^\ve = \wt{L} + \mbox{\f $\dis\frac{1}{\ve}$} \;(B + \kappa \, \cD), 
\end{equation}
where
\begin{equation}\label{0.2}
\wt{L} = \Sigma_\ell \, \lambda_\ell \,\Big(\mbox{\f $\dis\frac{a}{2}$} \; (1 + \delta_\ell)\, \partial^2_\ell - x_\ell \, \partial_\ell\Big)
\end{equation}
with $\delta_\ell \in (-1,0]$ and $\delta_{2i} = \delta_{2i - 1}$, for $1 \le i \le n$ (this last assumption can actually be dispensed with, but is kept to streamline the exposition, see below (\ref{1.9})),
\begin{equation}\label{0.3}
B = \Sigma_\ell \, b(x)_\ell \, \partial_\ell ,
\end{equation}
\begin{samepage}
where $b(x)_\ell$ is for each $\ell$ a quadratic form in the coordinates of $x$ (it should be thought of as a drift of Galerkin-Navier-Stokes type, see Remark \ref{rem1.1}) such that
\begin{equation}\label{0.4}
\mbox{${\rm div} \, b = 0$, and $\langle x, b(x)\rangle = 0$, $\langle x, b(x)\rangle_{-1} = 0$, for all $x$ in $\IR^N$},
\end{equation}
$0 < \kappa \le 1$ controls the strength of the stirring (it plays a regularization role) induced by
\begin{equation}\label{0.5}
\cD = \fr \; \Sigma^M_1 \, Z^2_m ,
\end{equation}
where $Z_m, 1 \le m \le M$, are certain specific vector fields, quadratic or linear in the coordinates of $x$, see (\ref{1.14}), (\ref{1.15}), such that for each $1 \le m \le M$,
\begin{equation}\label{0.6}
\mbox{${\rm div} \, Z_m = 0$, and $\langle x, Z_m(x) \rangle = 0, \langle x, Z_m(x)\rangle_{-1} = 0$, for all $x$ in $\IR^N$}
\end{equation}
(these $(Z_m)_{1 \le m \le M}$ are shown to satisfy a full rank property in Proposition \ref{propA.3}).

\medskip
Alternatively, the above stationary diffusion can be seen as the law of the stationary solution of the Stratonovich stochastic differential equation on $\IR^N$:
\begin{equation}\label{0.7}
d X^\ve_t = \big(- \Lambda X^\ve_t + \mbox{\f $\dis\frac{1}{\ve}$} \;b(X^\ve_t)\big) \, dt + \Sigma_\ell \big(\lambda_\ell a(1 + \delta_\ell)\big)^{\frac{1}{2}} e_\ell \ d \beta_\ell(t)  + \Big(\mbox{\f $\dis\frac{\kappa}{\ve}$}\Big)^{1/2} \Sigma_1^M Z_m (X^\ve_t) \circ d \wt{\beta}_m(t)
\end{equation}
where $(\Lambda x)_\ell = \lambda_\ell \, x_\ell$, for $x \in \IR^N, 1 \le \ell \le N, (e_\ell)_{1 \le \ell \le N}$ is  the canonical basis of $\IR^N, \beta_\ell, 1 \le \ell \le N, \wt{\beta}_m, 1 \le m \le M$, are independent Brownian motions.

\medskip
The martingale problem formulation attached to $\wt{\cL}\,\!^\ve$, see Proposition \ref{prop1.3}, will however be more convenient in view of the main weak convergence result that we wish to prove in this work (see Theorem \ref{theo5.1}). Let us sketch its content.

\medskip
When one lets $\wt{\cL}^{\,\!\ve}$ act on a function $g(x) = \psi (|x|^2, |x|^2_{-1})$, with $\psi (u,v)$ smooth on $\IR^2$, due to (\ref{0.4}), (\ref{0.6}), one obtains (with $\wt{L}$ as in (\ref{0.2})):
\begin{equation}\label{0.8}
\begin{split}
\Le \,g (x) & = \wt{L} \,g(x)
\\[-0.5ex]
& = 2 a \Big(\Sigma_\ell \, \lambda_\ell (1 + \delta_\ell) \, x^2_\ell \, \partial^2_u \, \psi + 2 \Sigma_\ell (1 + \delta_\ell) \, x_\ell^2 \, \partial^2_{u,v} \psi + \Sigma_\ell \; \mbox{\f $\dis\frac{1}{\lambda_\ell}$} \; (1 + \delta_\ell) \, x_\ell^2 \, \partial^2_v \psi\Big)
\\[-0.5ex]
& + \; (a \Sigma_\ell \, \lambda_\ell (1 + \delta_\ell) - 2 \Sigma_\ell \, \lambda_\ell \, x_\ell^2) \, \partial_u \psi + (a \Sigma_\ell (1 + \delta_\ell) - 2 \Sigma_\ell \, x_\ell^2) \, \partial_v \psi
\end{split}
\end{equation}
(where the partial derivations of $\psi$ are evaluated at $u = |x|^2, v = |x|^2_{-1}$).

\medskip
The effective diffusion process constructed in the companion article \cite{SzniWidm26a} is stationary, and lives in the interior $\stackrel{_\circ}{C}$ of the two-dimensional cone $C = \{(u,v) \in \IR^2$; $0 \le v \le u \le \lambda_N v\}$. It has a generator $\wt{A}$, see (\ref{2.10}), where in essence the terms $x^2_\ell$ in the second and third line of (\ref{0.8}) are replaced by functions $q_\ell$ (see also (\ref{2.9}) and Remark \ref{remA.2}) that, a bit informally, can be characterized through:
\begin{equation}\label{0.9}
\begin{split}
q_\ell (u,v)  = &\; \mbox{a good version of the conditional expectation}
\\
& \; E^\mu \big[x^2_\ell \, \big| \, |x|^2 = u, |x|^2_{-1} = v\big], \; 1 \le \ell \le N
\end{split}
\end{equation} 
(incidentally, these functions do not depend on the parameter $a > 0$, see below (\ref{2.8})).
\end{samepage}

\medskip
The above recipe to define the operator $\wt{A}$ turns out to produce an elliptic diffusion operator with Lipschitz coefficients in $\stackrel{_\circ}{C}$, for which the martingale problem is well posed, and for which there is a unique stationary distribution, see \cite{SzniWidm26a} and (\ref{2.11}), (\ref{2.12}), (\ref{2.15}), (\ref{2.16}). We denote by $\wt{P}$ the corresponding law of the $\stackrel{_\circ}{C}$-valued stationary diffusion process, see (\ref{2.15}). Also (and in a simpler fashion), for each $\ve > 0$, the martingale problem attached to $\cL^\ve$ is well-posed and has a unique stationary distribution, see Proposition \ref{prop1.3}. We denote by $\wt{\IP}^\ve$ the corresponding law of the $\IR^N$-valued stationary diffusion process, and by $(X_t)_{t \ge 0}$ the canonical $\IR^N$-valued process.

\medskip
A central ``proof of concept'' result of this work, in view of \cite{SzniWidm26a}, is Theorem \ref{theo5.1}. It shows that
\begin{equation}\label{0.10}
\mbox{as $\ve \r 0$, the laws of $(|X_t|^2, |X_t|^2_{-1})_{t \ge 0}$ under $\wt{\IP}^\ve$ converge weakly to $\wt{P}$}.
\end{equation}
This corresponds to an averaging result, see for instance Chapters 6 and 7 of \cite{FreiWent12}, \cite{Kuks13}, \cite{KuksPiat08}. A special difficulty here has to do with the fact that the ``fast variables'' evolve in the space $\IX_{u,v} = \{x \in \IR^N; |x|^2 = u, |x|^2_{-1} = v\}$, with $(u,v) \in C$, which moves with the ``slow variables'' $u = |X_t|^2$, $v = |X_t|^2_{-1}$, and the slow variables cross singular rays corresponding to $u = \lambda_\ell \, v$, $3 \le \ell \le N-3$ (see also Figure 1 in Appendix A). The estimates on the $L^2$-norm of the stationary density of \cite{BedrLiss21} can be adapted to the present set-up, and are helpful, see Proposition \ref{prop3.2}, and \cite{ElesHerzLiss26} for recent developments on this topic.

\medskip
It is important to point out that in Theorem \ref{theo5.1}, see (\ref{0.10}), the limit distribution $\wt{P}$ does not depend on $\kappa$ (the strength of the stirring that plays a regularization role). By usual arguments, see Remark \ref{rem5.2}, this shows that one can prove a similar weak convergence to the same $\wt{P}$ as in (\ref{0.10}), in the case where $\kappa$ in (\ref{0.1}) (or (\ref{0.9})) is replaced with a suitable $\kappa_\ve \r 0$, as $\ve \r 0$. It would be highly desirable to obtain a quantitative criterion on the decay to $0$ of $\kappa_\ve$ ensuring the convergence to $\wt{P}$, especially in the case of a drift $b(\cdot)$ stemming from the Galerkin projection of the non-linear term of the two-dimensional Navier-Stokes equation in vorticity form on a torus, as in (\ref{1.12}).

\medskip
Another notable result in the present article is that the weak convergence statement (\ref{0.10}) can be combined with the remarkable quantitative condensation bounds established in Theorem 5.1 of the companion article \cite{SzniWidm26a}, to yield in Theorem \ref{theo6.1} (which is more precise) that defining
\begin{equation}\label{0.11}
B_0 = a \, \Sigma_\ell (1 + \delta_\ell) < B_1 = a \, \Sigma_\ell \, \lambda_\ell (1 + \delta_\ell)
\end{equation}
(incidentally $2 \wt{\IE}^\ve [| X_t|^2] = B_0$ for all $\ve > 0$ and $t \ge 0$, see (\ref{3.4})), one has
\begin{equation}\label{0.12}
\begin{array}{l}
\mbox{for any $\ell_0 \in \{3, \dots, N\}$},
\\
2 \lim\limits_{\ve \r 0} \, \wt{\IE}^\ve \, [|X_0|^2 - |X_0|^2_{-1}] \le \mbox{\f $\dis\frac{B_1 - B_0}{\lambda_{\ell_0} - 1}$} + \mbox{\f $\dis\frac{\lambda_3}{\lambda_3- 1}$} \; \mbox{\f $\dis\frac{\ell_0}{N- \ell_0}$} \; B_0
\end{array}
\end{equation}
(the limit exists and the quantity under the expectation in the left member is non-negative by definition of the norms $| \cdot |, | \cdot |_{-1}$).

\medskip
Thus, when $\ell_0$ can be chosen so that $B_1 / B_0 \ll \lambda_{\ell_0}$ and $\ell_0 \ll N$, the left member of (\ref{0.12}) is small compared to $B_0 = 2 \wt{\IE}^\ve [|X_0|^2]$. This corresponds to an inviscid condensation, we refer to Remark \ref{rem6.2} for a more detailed discussion.

\medskip
Let us finally describe the organization of this article.

\medskip
Section 1 presents the set-up. Section 2 recalls the results from the companion article \cite{SzniWidm26a} concerning the functions $q_\ell$, the stationary diffusion in the cone $\stackrel{_\circ}{C}$, and the quantitative condensation bound, which it satisfies. Section 3 contains several a priori estimates. Section 4 proves the tightness of the laws of the ``enstrophy-energy'' process $(|X_t|^2, |X_t|^2)_{t \ge 0}$ under the $\wt{\IP}^\ve, \ve > 0$, in Theorem \ref{theo4.1}. Section 5 is mainly devoted to the proof of Theorem \ref{theo5.1} (see (\ref{0.10})). The short section 6 contains the inviscid condensation bound in Theorem \ref{theo6.1} ((\ref{0.12}) is a special case). Appendix A collects several results converging the spaces $\IX_{u,v}$, the conditional measure $\mu_{u,v}$, and the full rank property of the vector fields $Z_m, 1 \le m \le M$. Appendix B contains the main estimate on the convergence to equilibrium for the ``fast motion'' on the space $\IX_{u,v}$ when $u,v$ remain in a ``good set'' in Proposition \ref{propB.2}. Finally, the convention concerning positive constants is stated at the beginning of each section where it is needed. Numbered constants have a fixed value throughout the article.

\bigskip\bigskip\n
{\bf Acknowledgment:}  The authors wish to thank Tobias Rohner for extensive simulations of the incompressible Navier-Stokes equation with Brownian forcings on a two-dimensional torus.

\section{The set-up}
\setcounter{equation}{0}

In this section we introduce some notation as well as various objects, which we will encounter throughout this article. We have an even integer
\begin{equation}\label{1.1}
\mbox{$N = 2n$, with $n \ge 4$},
\end{equation}
as well as the positive $\lambda_\ell, 1 \le \ell \le N$, such that
\begin{equation}\label{1.2}
1 = \lambda_1 = \lambda_2 < \lambda_3 = \lambda_4 < \dots < \lambda_{2i-1} = \lambda_{2i} < \dots < \lambda_{N-1} = \lambda_N .
\end{equation}
We often use the notation
\begin{equation}\label{1.3}
\mbox{$\mu_i = \lambda_{2i}$, for $1 \le i \le n$, with $n$ as in (\ref{1.1})}.
\end{equation}
We introduce the scalar products on $\IR^N$ denoted by $\langle \cdot, \cdot \rangle$ and $\langle \cdot, \cdot\rangle_{-1}$ associated to the square norms:
\begin{equation}\label{1.4}
|x|^2 = \Sigma_\ell \, x_\ell^2 \ge |x|^2_{-1} = \Sigma_\ell \; \mbox{\f $\dis\frac{x_\ell^2}{\lambda_\ell}$} , \; \mbox{for $x \in \IR^N$}.
\end{equation}
We sometimes refer to the coordinates $x_\ell$ of $x$ as {\it modes}, $x_1$ and $x_2$ being the {\it lowest modes}, see (\ref{1.2}) and Remark \ref{rem1.1} below. Further, we have a Gaussian measure $\mu$ on $\IR^N$ corresponding to 
\begin{equation}\label{1.5}
a > 0 
\end{equation}
and
\begin{equation}\label{1.6}
d \mu = (\pi a)^{-N/2} \; \exp\Big\{- \mbox{\f $\dis\frac{|x|^2}{a}$}\Big\} \; dx
\end{equation}
(under $\mu$ the coordinates $x_\ell, 1 \le \ell \le N$, are i.i.d. centered Gaussian variables with variance equal to $\frac{a}{2}$).

Our main object of interest is a certain stationary diffusion process on $\IR^N$, see Proposition \ref{prop1.3} below. It depends on a (small) parameter
\begin{equation}\label{1.7}
\ve > 0
\end{equation}
and we will sometimes restrict $\ve$ to $(0,1]$.

\medskip
Among the ingredients to describe this diffusion are the coefficients
\begin{equation}\label{1.8}
\mbox{$\delta_\ell \in (-1,0]$, with $\delta_{2i} = \delta_{2i-1}$, for $1 \le i \le n$},
\end{equation}
which enter the definition of the diffusion operator
\begin{equation}\label{1.9}
\wt{L} = \Sigma_\ell \, \lambda_\ell \, \Big(\mbox{\f $\dis\frac{a}{2}$} \; (1 + \delta_\ell) \, \partial^2_\ell - x_\ell \, \partial_\ell\Big).
\end{equation}
Viewed as a generator, it corresponds to independent Ornstein-Uhlenbeck processes for each coordinate, with respective stationary variances $\frac{a}{2} \, (1 + \delta_\ell)$ and speed-up factor $\lambda_\ell$. The assumption that $\delta_{2i} = \delta_{2i - 1}$ in (\ref{1.8}), can actually be removed due to the fact that $q_{2i} = q_{2i-1}, \lambda_{2i} = \lambda_{2i-1}$, and one has the identity (\ref{A.21}), see also below (3.6) of \cite{SzniWidm26a}. But we keep it here because it somewhat simplifies the exposition.

\medskip
Further, there is a drift-type operator on $\IR^N$
\begin{equation}\label{1.10}
\mbox{$B = \Sigma_\ell \,b(x)_\ell \, \partial_\ell$, where $b(x)_\ell$ is for each a quadratic form in the coordinates of $x$},
\end{equation}
such that
\begin{equation}\label{1.11}
\mbox{${\rm div} \, b=0$, and $\langle x, b(x) \rangle = 0$, $\langle x, b(x) \rangle_{-1} = 0$  for all $x \in \IR^N$}.
\end{equation}

\begin{remark}\label{rem1.1} \rm The main motivating example for such a $b(\cdot)$ corresponds to the case of a suitable Galerkin projection for the non-linear term in the two-dimensional incompressible Navier-Stokes equation in vorticity form on a possibly thin torus $\IT = (\IR / 2 \pi \mu \IZ) \times (\IR / 2 \pi \IZ)$, with $0 < \mu \le 1$, when $\lambda_\ell, 1 \le \ell \le N$, are eigenvalues of the Laplacian on $\IT$, and $\varphi_\ell$ are $L^2(\IT)$-normalized, pairwise orthogonal eigenfunctions of the Laplacian attached to the $\lambda_\ell, 1 \le \ell \le N$, the space being chosen so that (\ref{1.1}), (\ref{1.2}) holds. Then, one sets
\begin{equation}\label{1.12}
\begin{array}{l}
b(x)_\ell = \dis\sum\limits_{1 \le a, b\le N} x_a \, x_b \, \Big(\mbox{\f $\dis\frac{1}{\lambda_a}$} - \mbox{\f $\dis\frac{1}{\lambda_b}$}\Big) \,  t_{a,b,\ell}, \; \mbox{for $1 \le \ell \le N$, with}
\\
\\[-1ex]
t_{a,b,\ell} = \fr \;\langle {\rm det} \, (\nabla \varphi_a, \nabla \varphi_b), \varphi_\ell \rangle_{L^2(\IT)}, \; \mbox{for $1 \le a, b, \ell \le N$}.
\end{array}
\end{equation}
The identities (\ref{1.11}) can straightforwardly be checked in this case, using the fact that for $a,b,c$ in $\{1,\dots,N\}$
\begin{equation}\label{1.13}
\mbox{$t_{a,b,c} = - t_{b,a,c}$ and $t_{a,b,c} = t_{c,a,b}$}
\end{equation}
(the first equality is straightforward, and the second equality follows by integration by parts in the formula defining $t_{a,b,c}$). Hence, $t_{a,b,c} \not= 0$ implies that $a,b,c$ are distinct. The first equality of (\ref{1.11}) is then immediate, and for the second and third it suffices to regroup in the corresponding sums the terms involving three given coordinates of $x$. \hfill $\square$
\end{remark}
The last ingredient to describe the diffusion on $\IR^N$ has to do with ``stirring''. For this purpose, we introduce vector fields on $\IR^N$ (which we also view as differential operators). 

\medskip
First, we let $\cJ$ stand for the collection of triplets $J = (k,\ell,m)$ in $\{1, \dots , N\}$ with $\lambda_k < \lambda_\ell < \lambda_m$ and for $J$ in $\cJ$ we set (there are 3 terms in the sum):
\begin{equation}\label{1.14}
T_J(x) = \dis\sum\limits_{\mbox{\f $(a,b,c)$ cyclic permutation of $J$}} x_a \,x_b \,  \Big(\mbox{\f $\dis\frac{1}{\lambda_a}$} - \mbox{\f $\dis\frac{1}{\lambda_b}$}\Big) \, \partial_c\,,
\end{equation}
(it has a bit the flavor of the constitutive elements of (\ref{1.12}), however without the presence of the $t_{a,b,c})$, and for $1 \le i \le n$, we set
\begin{equation}\label{1.15}
R_i(x) = x_{2i} \, \partial_{2 i - 1} - x_{2i-1} \, \partial_{2i}\,
\end{equation}
(it generates a rotation in the space spanned by $e_{2i-1}$ and $e_{2i}$). One then endows $\cJ$ with some order (for instance alphabetic) to label the elements of $\cJ$ as $J_m, 1 \le m \le | \cJ |$, and one defines the vector fields
\begin{equation}\label{1.16}
\begin{split}
Z_m = &\; T_{\cJ_m}, \; \mbox{for $1 \le m \le | \cJ |$},
\\
= &\; R_i, \; \mbox{for $m = | \cJ | + i$, with $| \cJ | + 1 \le m \le M \stackrel{\rm def}{=} | \cJ | + n$}\,.
\end{split}
\end{equation}
The vector fields $Z_m, 1 \le m \le M$ satisfy similarly as in (\ref{1.11}):
\begin{equation}\label{1.17}
\mbox{${\rm div} \, Z_m = 0$, and $\langle x, Z_m(x) \rangle = 0, \langle x, Z_m(x)\rangle_{-1} = 0$ for all $x \in \IR^N$}
\end{equation}
(and are either quadratic or linear in $x$).

\medskip
The following observation will be useful: if $Z$ stands for one of the vector fields $Z_m$ in (\ref{1.16}) or for $B$ in (\ref{1.10}), if $g(x) = \psi (|x|^2, |x|^2_{-1})$ for $x$ in $\IR^N$, with $\psi$ a $C^1$-function on $\IR^2$, and $f$ is a $C^1$-function on $\IR^N$, then one has
\begin{equation}\label{1.18}
Z (g\, f) = g\, Z\, f \;\; \mbox{(in particular $Z \, g = 0$)}.
\end{equation}
Responsible for the ``stirring'' of the diffusion (and playing the role of a regularization) will be the operator 
\begin{equation}\label{1.19}
\cD = \fr \; \Sigma^M_1 \, Z^2_m\,,
\end{equation}
and the strength of stirring will be governed by the parameter
\begin{equation}\label{1.20}
0 < \kappa \le 1
\end{equation}
(which can be thought as arbitrarily small).

\medskip
We can now introduce the diffusion operator on $\IR^N$, see (\ref{1.9}), (\ref{1.10}), (\ref{1.19})
\begin{equation}\label{1.21}
\Le = \wt{L} + \mbox{\f $\dis\frac{1}{\ve}$} \;(B + \kappa \cD), \; \mbox{for $\ve > 0$}.
\end{equation}
We have (see (\ref{1.9}), (\ref{1.21}) for notation)
\begin{lemma}\label{lem1.2}
Given $\psi(u,v)$ a $C^2$-function on $\IR^2$, setting $g(x) = \psi ( | x |^2, |x|^2_{-1})$, for $x$ in $\IR^N$, one has for $\ve > 0$, $x \in \IR^N$
\begin{equation}\label{1.22}
\begin{split}
\Le \,g(x) = & \; \wt{L} \,g(x)
\\
= & \;2a \,\Big(\Sigma_\ell \, \lambda_\ell (1 + \delta_\ell) \, x_\ell^2 \, \partial^2_u \psi + 2 \Sigma_\ell (1 + \delta_\ell) \, x_\ell^2 \, \partial^2_{u,v} \psi + \Sigma_\ell \; \mbox{\f $\dis\frac{1}{\lambda_\ell}$} \;(1 + \delta_\ell) \, x^2_\ell \, \partial^2_v \psi\Big)
\\
& + (a \, \Sigma_\ell \, \lambda_\ell (1 + \delta_\ell) - 2 \Sigma_\ell \, \lambda_\ell \, x_\ell^2) \, \partial_u \psi + (a\,\Sigma_\ell ( 1 + \delta_\ell) - 2 \Sigma_\ell \, x_\ell^2) \, \partial_v \psi
\end{split}
\end{equation}
(the partial derivative of $\psi$ are evaluated at $u = |x|^2, v = |x|^2_{-1}$).

\medskip
Further, if $g_0(x) = \exp \{ \frac{1}{2a} \;|x|^2_{-1}\}$, for $x$ in $\IR^N$, then for $\ve > 0$,
\begin{equation}\label{1.23}
\Le \,g_0(x) \le -(a N - |x|^2) \, / \, 2a \, e^{|x|^2_{-1} / 2a} \le \mbox{\f $\dis\frac{N}{2}$} \; e^{N/2}, \; \mbox{for $x$ in $\IR^N$}.
\end{equation}
\end{lemma}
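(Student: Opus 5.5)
The plan is to use the invariance property (\ref{1.18}) to remove the singular part $\frac{1}{\ve}(B+\kappa\cD)$ from the action of $\Le$ on $g$. What remains is $\wt{L}$, which is a plain chain-rule computation. The bound (\ref{1.23}) then follows by specializing $\psi$ and estimating the result with $\delta_\ell \le 0$ and $\lambda_\ell \ge 1$.

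\emph{Step 1: reduction to $\wt{L}$.} By (\ref{1.18}), $B g = 0$ and $Z_m g = 0$ for every $1 \le m \le M$. Hence $Z_m^2 g = Z_m(Z_m g) = 0$, so $\cD g = 0$ by (\ref{1.19}). Since $\psi$ is $C^2$, $g$ is $C^2$ and these manipulations are legitimate. This gives $\Le g = \wt{L} g$ for every $\ve > 0$.

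\emph{Step 2: computing $\wt{L} g$.} The chain rule gives
$\partial_\ell g = 2x_\ell\,\partial_u\psi + \frac{2x_\ell}{\lambda_\ell}\,\partial_v\psi$ and
$\partial_\ell^2 g = 2\partial_u\psi + \frac{2}{\lambda_\ell}\partial_v\psi + 4x_\ell^2\big(\partial_u^2\psi + \frac{2}{\lambda_\ell}\partial^2_{u,v}\psi + \frac{1}{\lambda_\ell^2}\partial_v^2\psi\big)$.
Multiplying the second derivative by $\lambda_\ell\frac{a}{2}(1+\delta_\ell)$ and summing over $\ell$ produces
\begin{itemize}
\item the second-order terms $2a\sum_\ell(1+\delta_\ell)x_\ell^2\big(\lambda_\ell\partial_u^2\psi + 2\partial^2_{u,v}\psi + \lambda_\ell^{-1}\partial_v^2\psi\big)$, and
\item the first-order terms $a\sum_\ell\lambda_\ell(1+\delta_\ell)\,\partial_u\psi + a\sum_\ell(1+\delta_\ell)\,\partial_v\psi$.
\end{itemize}
The drift part $-\lambda_\ell x_\ell\partial_\ell g$ summed over $\ell$ gives $-2\sum_\ell\lambda_\ell x_\ell^2\,\partial_u\psi - 2\sum_\ell x_\ell^2\,\partial_v\psi$. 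Collecting the three contributions yields (\ref{1.22}).

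\emph{Step 3: the Lyapunov bound.} Take $\psi(u,v) = e^{v/2a}$. Then $\partial_u\psi = 0$, $\partial_v\psi = \psi/2a$ and $\partial_v^2\psi = \psi/4a^2$. Substituting into (\ref{1.22}) gives
$\Le g_0 = \frac{g_0}{2a}\big(\sum_\ell\frac{1+\delta_\ell}{\lambda_\ell}x_\ell^2 + a\sum_\ell(1+\delta_\ell) - 2|x|^2\big)$.
Since $1+\delta_\ell \le 1$ and $\lambda_\ell \ge 1$, the bracket is at most $|x|^2 + aN - 2|x|^2 = aN - |x|^2$. Therefore $\Le g_0 \le \frac{aN-|x|^2}{2a}\,e^{|x|^2_{-1}/2a}$.

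This is the first inequality in (\ref{1.23}), with the prefactor read as $(aN-|x|^2)/2a$. The minus sign as printed seems to be a misprint: with it, the right-hand side would grow like $(|x|^2-aN)e^{|x|^2_{-1}/2a}$, contradicting the final constant bound.

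For the second inequality, split into two cases.
\begin{itemize}
\item If $|x|^2 \ge aN$, the right-hand side is $\le 0$.
\item Otherwise $|x|^2_{-1} \le |x|^2 < aN$ by (\ref{1.4}), so $e^{|x|^2_{-1}/2a} \le e^{N/2}$ and $(aN-|x|^2)/2a \le N/2$.
\end{itemize}
Together these give $\Le g_0 \le \frac{N}{2}e^{N/2}$.

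The only delicate points are the bookkeeping in the chain rule of Step 2 and the sign convention in (\ref{1.23}). The structural input, Step 1, is immediate from (\ref{1.18}).
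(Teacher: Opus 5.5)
Your proposal is correct and follows the paper's proof essentially step by step: you get $(B+\kappa\cD)g=0$ from (\ref{1.18}), compute $\wt{L}g$ directly by the chain rule, and then take $\psi=e^{v/2a}$, bounded using $\delta_\ell\le 0$, $\lambda_\ell\ge 1$ and a split according to whether $|x|^2\le aN$. Your reading of the first bound in (\ref{1.23}) as $(aN-|x|^2)/(2a)\,e^{|x|^2_{-1}/2a}$ is exactly what the paper's own computation (\ref{1.24}) gives, so the leading minus sign in the statement is indeed a misprint.
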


\begin{proof}
We begin with (\ref{1.22}). By (\ref{1.18}) we see that $(B + \kappa \cD) \, g = 0$ and the first equality follows, and the second equality follows from (\ref{1.9}) and direct computation.

\medskip
As for (\ref{1.23}), we have by (\ref{1.22})
\begin{equation}\label{1.24}
\begin{split}
\Le \, g_0(x) & = \Big\{2a \, / \, (2a)^2 \, \Sigma_\ell \; \mbox{\f $\dis\frac{1}{\lambda_\ell}$} \; (1 + \delta_\ell) \, x^2_\ell + (a \, \Sigma_\ell (1 + \delta_\ell) - 2 |x|^2) \, / \, (2a)\Big\} \, e^{|x|^2_{-1} / 2a}
\\
&\!\!\! \stackrel{(\ref{1.8})}{\le} (a N - |x|^2) \, / \, (2a) \; e^{|x|^2_{-1} / 2a}
\end{split}
\end{equation}
which becomes negative when $|x|^2 > a\, N$, and at most $N/2 \; e^{N/2}$ otherwise, whence (\ref{1.23}). 
\end{proof}

We now have the ingredients to construct the diffusion process on $\IR^N$ attached to $\Le$ by means of a martingale problem, which is well posed, and induces a strong Markov process on $\IR^N$, which has a unique stationary distribution. With this in mind we consider the space $C(\IR_+,\IR^N)$ of continuous $\IR^N$-valued functions on $\IR_+$, endowed with its canonical $\sigma$-algebra $\cG$, its canonical filtration $(\cG_t)_{t \ge 0}$ and its canonical process $(X_t)_{t \ge 0}$.

\begin{proposition}\label{prop1.3}
Given $x$ in $\IR^N$, there is a unique probability $\Pe_x$ on $C(\IR_+, \IR^N)$ such that
\begin{equation}\label{1.25}
\begin{array}{l}
\mbox{$\Pe_x(X_0 = x) = 1$, and for any smooth compactly supported function $f$ on $\IR^N$},
\\
\mbox{$f(X_t) - \dis\int^t_0 \Le \,f(X_s) \, ds, t \ge 0$, is a $(\cG_t)_{t \ge 0}$-martingale}.
\end{array}
\end{equation}
The family $\Pe_x, x \in \IR^N$, constitutes a strong Markov process with state space $\IR^N$, and
\begin{equation}\label{1.26}
\mbox{there is a unique stationary probability $\wt{\mu}_\ve$ for this Markov process}.
\end{equation}
Further, one has
\begin{equation}\label{1.27}
\dis\int_{\IR^N} \exp\Big\{\mbox{\f $\dis\frac{1}{2a}$} \;|x|^2_{-1}\Big\} \; d \wt{\mu}_\ve(x) < \infty\,,
\end{equation}
and
\begin{equation}\label{1.28}
\mbox{$\wt{\mu}_\ve$ is absolutely continuous with respect to the Lebesgue measure on $\IR^N$}.
\end{equation}
\end{proposition}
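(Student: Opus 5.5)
Since the coefficients of $\Le$ are smooth (polynomial: linear drift $-\Lambda x$, quadratic $\frac1\ve b$, the first-order Stratonovich correction $\frac{\kappa}{2\ve}\Sigma_m (Z_m\cdot\nabla) Z_m$, which is cubic or linear, and the diffusion matrix $\mathrm{diag}(\lambda_\ell a(1+\delta_\ell)) + \frac{\kappa}{\ve}\Sigma_m Z_m Z_m^T$), they are locally Lipschitz. The plan is therefore to construct $\Pe_x$ as the law of the solution of the SDE (\ref{0.7}), rewritten in It\^o form. Local Lipschitz coefficients give pathwise uniqueness and strong existence up to explosion. Non-explosion comes from the Lyapunov function $g_0$ of Lemma \ref{lem1.2}: by (\ref{1.23}), $\Le g_0 \le \frac N2 e^{N/2}$. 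Since $g_0 \ge e^{|x|^2/(2a\lambda_N)} \to \infty$, the standard Khasminskii argument (stopping at exit times of the balls and applying It\^o/Dynkin) shows the exit times tend to infinity a.s.

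Well-posedness of the martingale problem (\ref{1.25}) then follows from the Stroock--Varadhan/Yamada--Watanabe equivalence between weak solutions of the martingale problem and solutions of the SDE. Uniqueness in law follows from pathwise uniqueness, localized up to exit times, together with non-explosion. The strong Markov property is a standard consequence of well-posedness, with measurability in $x$ coming from the construction.

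For stationarity, the next step is to strengthen the Lyapunov bound. By (\ref{1.23}) and $|x|^2\ge |x|^2_{-1}$,
\[
\Le g_0 \le -c\,g_0 + C ,
\]
which gives $\wt\IE^\ve_x[g_0(X_t)] \le e^{-ct}g_0(x)+C/c$. Concretely, outside the ball $\{|x|^2\le 2aN\}$ we have $\Le g_0 \le -\frac{|x|^2}{4a} g_0 \le -\frac N2 g_0$, and inside that ball $g_0$ is bounded.
\begin{itemize}
\item \emph{Existence:} Krylov--Bogoliubov applied to time averages of $\Pe_x\circ X_t^{-1}$ gives an invariant probability, using tightness from the bound above and the Feller property from well-posedness. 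Fatou then yields (\ref{1.27}).
\item \emph{Absolute continuity (\ref{1.28}) and uniqueness (\ref{1.26}):} the diffusion matrix is uniformly positive definite, at least $\min_\ell \lambda_\ell a(1+\delta_\ell)\,\mathrm{Id}$, since $\delta_\ell>-1$, so $\Le$ is uniformly elliptic on compacts with smooth coefficients. Hence transition probabilities have positive smooth densities (hypoellipticity plus the support theorem, or just ellipticity: the parabolic Harnack inequality gives strict positivity). Any invariant measure solves $(\Le)^*\wt\mu_\ve=0$ in the distributional sense, so by elliptic regularity it has a smooth density. Two distinct invariant probabilities would produce mutually singular ergodic ones, contradicting the positivity of the transition densities (Doob/Khasminskii). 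This gives (\ref{1.26}).
\end{itemize}

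The main, if mild, obstacle is non-explosion and the moment bound, because the Stratonovich correction is cubic and the stirring coefficients are of order $1/\ve$. This is exactly why one works with $g_0$, which is annihilated by $B$ and $\cD$ by (\ref{1.18}): the argument sidesteps any growth estimate on these superlinear terms.
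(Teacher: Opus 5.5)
Your proposal is correct and is essentially the paper's argument. The paper also uses $g_0$ as a Lyapunov--Foster function via (\ref{1.23}), read with the sign of the second line of (\ref{1.24}) as you implicitly do, since (\ref{1.23}) carries a stray minus sign. Together with the ellipticity of $\Le$, it then invokes Theorems 2.1 and 4.2 of Meyn--Tweedie for well-posedness and for (\ref{1.26}), (\ref{1.27}), and the argument of Proposition 4.3 of the companion article for (\ref{1.28}); your Khasminskii, Krylov--Bogoliubov/Fatou, elliptic-regularity and Doob-type steps are the classical proofs behind those cited results.
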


We will see stronger versions of {\rm (\ref{1.27}), (\ref{1.28})} in Section 3.

\begin{proof}
The proof is analogous (but much simpler) than the proof of Theorem 3.2, Theorem 4.1 and Proposition 4.3 of the companion article \cite{SzniWidm26a}. With (\ref{1.23}), one establishes a Lyapunov-Foster condition (see (3.12) - (3.13) of \cite{SzniWidm26a}), which ensures with Theorem 2.1 on p.~524 of \cite{MeynTwee93} the well-posedness of the martingale problem. With the ellipticity of $\Le$ and the Lyapunov-Foster condition resulting from (\ref{1.23}), one finds with Theorem 4.2 on p.~529 of \cite{MeynTwee93} that (\ref{1.26}), (\ref{1.27}) hold. Analogous arguments as in Proposition 4.3 of \cite{SzniWidm26a} yield (\ref{1.28}).
\end{proof}

Given $\ve > 0$, we then write
\begin{equation}\label{1.29}
\Pe = \dis \int \wt{\mu}_\ve (dx) \,\Pe_x ,
\end{equation}
for the stationary law of the diffusion on $\IR^N$ associated to $\Le$, (a probability on $C(\IR_+,\IR^N)$), and denote by $\Ee$ the corresponding expectation. Of special interest to us will be the small $\ve$ behavior of the law under $\Pe$ of the stochastic process
\begin{equation}\label{1.30}
\mbox{$(W_t)_{t \ge 0} = \big((U_t, V_t)\big)_{t \ge 0}$, where $U_t = |X_t|^2, V_t = |X_t|^2_{-1}$, for $t \ge 0$}.
\end{equation}

\begin{remark}\label{rem1.4} \rm 1) The process $(X_t)_{t \ge 0}$ under $\Pe_x$ from Proposition \ref{prop1.3} has the same law as the solution of the Stratonovich stochastic differential equation on $\IR^N$:
\begin{equation}\label{1.31}
\left\{ \begin{split}
dz_t & = \Big(\!\!- \Lambda z_t + \mbox{\f $\dis\frac{1}{\ve}$} \,b(z_t)\Big) dt + \Sigma_\ell \big(\lambda_\ell \,a(1 + \delta_\ell)\big)^{\frac{1}{2}} e_\ell \ d \beta_\ell(t) + \Big(\mbox{\f $\dis\frac{\kappa}{\ve}$}\Big)^{\frac{1}{2}} \Sigma_1^M Z_m(z_t)   \circ d \wt{\beta}_m(t)
\\
z_0 & = x,
\end{split}\right.
\end{equation}
where $(\Lambda x)_\ell = \lambda_\ell \,x_\ell$, for $x \in \IR^N, 1 \le \ell \le N, (e_\ell)_{1 \le \ell \le N}$ is the canonical basis of $\IR^N$, and $\beta_\ell, 1 \le \ell \le N, \wt{\beta}_m, 1 \le m \le M$, are independent Brownian motions. However, in view of the weak convergence result, we aim at in Theorem \ref{theo5.1}, the martingale problem formulation will be more convenient for us.

\bigskip\n
2) Typically, the stationary measure $\wt{\mu}_\ve$ from Proposition \ref{prop1.3} is not explicit. However, when all $\delta_\ell$ in (\ref{1.8}) are equal, i.e.~when
\begin{equation}\label{1.32}
\mbox{for some $\delta \in (-1,0], \delta_\ell = \delta$, for all $1 \le \ell \le N$},
\end{equation}
then
\begin{equation}\label{1.33}
\begin{array}{l}
\mbox{$\wt{\mu}_\ve$ is the Gaussian measure on $\IR^N$ for which all $x_\ell$ are independent centered}
\\
\mbox{Gaussian variables with variance $\frac{a}{2}\;(1 + \delta)$ (i.e.~$\mu$ in (\ref{1.6}) when $\delta = 0$)}.
\end{array}
\end{equation}
We prove the case $\delta = 0$, the general case follows by replacing $a$ with $a(1 + \delta)$. The martingale problem in (\ref{1.25}) being well-posed, it suffices by the theorem on p.~2 and Section 4 of \cite{Eche82} to show that for every smooth compactly supported $f$ on $\IR^N$ one has
\begin{equation}\label{1.34}
\dis\int d \mu(x) \; \Le \, f(x) = 0.
\end{equation}
To this end, one notes that when $g,h$ are smooth functions on $\IR^N$ growing at most polynomially together with their derivatives, one has by (\ref{1.11}), (\ref{1.17})
\begin{align}
\dis\int B \, g \;\, h \, d \,\mu & = - \dis\int g\;\,B \, h \, d \, \mu \;\; \mbox{and} \label{1.35}
\\[1ex]
\dis\int \cD \, g\;\, h \, d\,\mu & =  \dis\int g\;\, \cD \, h \, d \, \mu\,. \label{1.36}
\end{align}
Further, if $L = \Sigma_\ell \, \lambda_\ell ( \frac{a}{2} \; \partial^2_\ell - x_\ell \,\partial_\ell)$ corresponds to $\wt{L}$ when all $\delta_\ell = 0$, one also has classically
\begin{equation}\label{1.37}
\hspace{-4ex}\dis\int L\, g\;\,h \, d\, \mu  = - \dis\int g\;\, L \, h \, d \, \mu \,.
\end{equation}
Thus, choosing $g = f$ and $h = 1$, one finds (\ref{1.34}) since $\Le = L + \frac{1}{\ve} \,(B +  \kappa \cD)$. The claim (\ref{1.33}) follows for $\delta = 0$, and hence for general $\delta \in (-1,0]$, as explained above. \hfill $\square$
\end{remark}

We conclude this section with an identity for the stationary distribution $\wt{\mu}_\ve$ (or equivalently for $\Pe$), $\ve > 0$, which reflects the special role of enstrophy and energy, as they are conserved by the drift and the stirring, see (\ref{1.11}), and (\ref{1.17}).

\begin{proposition}\label{prop1.5}
Given $\psi(u,v)$ a smooth function on $\IR^2$, which together with all its derivatives has at most polynomial growth. Then, setting $g(x) = \psi (|x|^2_1,|x|^2_{-1})$ for $x$ in $\IR^N$, for every $\ve > 0$, the function $\wt{L} g(x)$ is $\wt{\mu}_\ve$-integrable and
\begin{equation}\label{1.38}
\dis\int d\, \wt{\mu}_\ve(x) \; \wt{L} g   = 0
\end{equation}
(we refer to {\rm (\ref{1.22})} for an expression of $\wt{L} g$ in terms of $\psi$).
\end{proposition}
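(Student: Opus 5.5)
The plan is to derive (\ref{1.38}) from the stationarity of $\wt{\mu}_\ve$ applied to a truncated version of $g$, together with a uniform integrability bound coming from (\ref{1.27}). First, integrability. Since $|x|^2_{-1} \ge |x|^2/\lambda_N$, the bound (\ref{1.27}) gives $\int \exp\{|x|^2/(2a\lambda_N)\}\, d\wt{\mu}_\ve < \infty$. Hence every polynomial in $x$ is $\wt{\mu}_\ve$-integrable. By (\ref{1.22}), $\wt{L} g$ is a sum of products of polynomials in $x$ with derivatives of $\psi$ evaluated at $(|x|^2,|x|^2_{-1})$. These derivatives have polynomial growth, so $|\wt{L} g(x)| \le c(1+|x|^2)^k$ for some $k$, and $\wt{L} g$ is integrable.

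Second, I will truncate. Pick $\chi \in C^\infty_c(\IR^2)$ with $0\le \chi \le 1$, $\chi = 1$ on the unit ball and $\chi = 0$ outside the ball of radius $2$. Set $\psi_R(u,v) = \psi(u,v)\chi(u/R,v/R)$ and $g_R(x) = \psi_R(|x|^2,|x|^2_{-1})$. Then $g_R$ is smooth and compactly supported on $\IR^N$. By (\ref{1.25}) and stationarity of $\wt{\mu}_\ve$ (see (\ref{1.29})),
\[
\Ee[g_R(X_t)] - \Ee[g_R(X_0)] = \int_0^t \Ee[\Le g_R(X_s)]\, ds = t \int \Le g_R \, d\wt{\mu}_\ve .
\]
The left side is $0$, so $\int \Le g_R\, d\wt{\mu}_\ve = 0$. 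Since $g_R$ is a function of $(|x|^2,|x|^2_{-1})$, the first equality of (\ref{1.22}) in Lemma \ref{lem1.2} gives $\Le g_R = \wt{L} g_R$. The drift and stirring parts of $\Le$ thus drop out exactly, and we obtain $\int \wt{L} g_R \, d\wt{\mu}_\ve = 0$ for every $R$.

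Third, I will let $R \to \infty$. The derivatives of $\psi_R$ up to order two are bounded by $c\,(1+|u|+|v|)^k$ uniformly in $R \ge 1$, because derivatives of $\chi(\cdot/R)$ only produce factors $R^{-1}$ and $R^{-2}$. These derivatives converge pointwise to those of $\psi$. Through (\ref{1.22}), $\wt{L} g_R \to \wt{L} g$ pointwise, with the uniform domination $|\wt{L} g_R(x)| \le c'(1+|x|^2)^{k'}$. This dominating function is $\wt{\mu}_\ve$-integrable by the first step. Dominated convergence then yields (\ref{1.38}).

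The main point needing care is the first step: the exponential moment in (\ref{1.27}) must control polynomial growth, which rests on the comparison $|x|^2 \le \lambda_N |x|^2_{-1}$. Everything else is routine. The key structural input is (\ref{1.18}), which makes $B$ and $\cD$ annihilate functions of $(|x|^2,|x|^2_{-1})$, so no $1/\ve$ terms appear.
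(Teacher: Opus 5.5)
Your proof is correct, but it follows a different route from the paper's.

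\textbf{The paper's route.} The paper works directly with the unbounded $g$. It observes that $M^\psi_t = g(X_t)-g(X_0)-\int_0^t \wt{L} g(X_s)\,ds$ is a continuous local martingale, with bracket $\int_0^t (\wt{L} g^2 - 2g\,\wt{L} g)(X_s)\,ds$, see (\ref{1.39}), (\ref{1.40}). It then uses (\ref{1.27}) and stationarity to make $g(X_t)$, $\wt{L} g(X_t)$ and $\langle M^\psi\rangle_t$ integrable. Doob's inequality upgrades $M^\psi$ to a square-integrable martingale. Finally, $0=\Ee[M^\psi_1]=-\int \wt{L} g\,d\wt{\mu}_\ve$ follows from stationarity.

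\textbf{Your route.} You stay inside the martingale problem (\ref{1.25}) for compactly supported test functions. You truncate in the $(u,v)$ variables, which is the right choice: $g_R$ remains a function of $(|x|^2,|x|^2_{-1})$, so (\ref{1.18}) still removes the $\frac{1}{\ve}(B+\kappa\cD)$ part exactly. You then let $R\to\infty$ by dominated convergence. The domination comes from (\ref{1.27}) together with $|x|^2\le \lambda_N |x|^2_{-1}$.

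\textbf{Comparison.} Your argument is more elementary. It needs no bracket computation, no localization and no maximal inequality. It also uses only the integrability of a single polynomial bound on $\wt{L} g$, rather than of $g$, $\wt{L} g$ and the bracket. What the paper's route buys is a genuine martingale $M^\psi$ with an explicit integrable bracket for polynomially growing $\psi$. This is reused in the tightness proof of Section 4, see (\ref{4.6}), (\ref{4.7}) and the Burkholder-Davis-Gundy bound (\ref{4.9}). Your argument delivers only the integrated identity (\ref{1.38}), which is all the statement asks for.
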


\begin{proof}
By Proposition \ref{prop1.3} and (\ref{1.29}), (\ref{1.22}), and stochastic calculus, we know that
\begin{equation}\label{1.39}
M^\psi_t = g(X_t) - g(X_0) - \dis\int^t_0 \wt{L} g(X_s) \,ds, t \ge 0,
\end{equation}
is a continuous local martingale, with bracket process
\begin{equation}\label{1.40}
\langle M^\psi \rangle_t = \dis\int^t_0 (\wt{L} \,g^2 - 2 g\,\wt{L} \,g)(X_s)\,ds, t \ge 0.
\end{equation}
By the exponential bound (\ref{1.27}) and stationarity,  $g(X_t), \wt{L} g(X_t)$, $\langle M^\psi\rangle_t$ are $\Pe$-integrable for every $t \ge 0$. Hence, by Doob's Inequality, see \cite{KaraShre88}, p.~14, $(M^\psi_t)_{t \ge 0}$ is a continuous square integrable martingale. It now follows that
\begin{equation}\label{1.41}
0 = \Ee [M_1^\psi] \stackrel{\mbox{\f stationarity}}{=} - \,\Ee [ \wt{L} g (X_0)] = - \dis\int_{\IR^N} d \wt{\mu}_\ve \, \wt{L} g,
\end{equation}
and this proves (\ref{1.38}). 
\end{proof}

\section{The limit diffusion process}
\setcounter{equation}{0}

In this section we recall some of the main features of the stationary diffusion process constructed in \cite{SzniWidm26a}. It will govern the $\ve \r 0$ limit behavior of the law of $(W_t)_{t \ge 0}$ under $\Pe$, see Theorem \ref{theo5.1}.

\medskip
The state space of the limit diffusion process is the interior $\stackrel{_\circ}{C}$ of the cone
\begin{equation}\label{2.1}
C = \{w = (u,v) \in \IR^2; \; 0 \le v \le u \le \lambda_N v\} \quad \mbox{(with $\lambda_N$ as in (\ref{1.2}))}.
\end{equation}
To describe the diffusion operator on $\stackrel{_\circ}{C}$, we first need to recall from \cite{SzniWidm26a} the functions $q_\ell$ on $C$ (they are ``good versions'' of the conditional expectations $E^\mu [x_\ell^2 \, \big| \, |x|^2 = u, |x|^2_{-1} = v]$, see also Remark \ref{remA.2}). Namely, see Section 2 of \cite{SzniWidm26a}, one has a collection of functions $q_\ell, 1 \le \ell \le N$, on $C$ such that
\begin{align}
&\mbox{each $q_\ell$ is non-negative, homogeneous of degree $1$, and Lipschitz on $C$}, \label{2.2}
\\[1ex]
&q_{2i} = q_{2i-1}, \; \mbox{for $1 \le i \le n$}, \label{2.3}
\\[1ex]
&\mbox{$ u = \Sigma_1^N \, q_\ell(u,v)$ and $v = \Sigma^N_1 \; \mbox{\f $\dis\frac{1}{\lambda_\ell}$} \; q_\ell (u,v)$, for all $(u,v) \in C$}, \label{2.4}
\\
&\mbox{each $q_\ell$ is positive on $\stackrel{_\circ}{C}$ and coincides on each open sector $\Delta_i = \{w = (u,v) \in C$;} \label{2.5}
\\
&0 < \mu_{i-1} \,v < u < \mu_i v\}, 1 < i \le n \; \mbox{(see (\ref{1.3}) for notation), with a rational function} \nonumber
\\
&\mbox{in $u,v$}. \nonumber
\end{align}
Further, the functions $q_\ell$ have the boundary values
\begin{align}
&\mbox{$q_\ell(u,v) = 0$ for $u = v$ or $u = \lambda_N v$, when $3 \le \ell \le N-2$, and $u \ge 0$}, \label{2.6}
\\[1ex]
&\mbox{$q_\ell(u,u) = \mbox{\f $\dis\frac{u}{2}$}$ and $q_\ell(u,u/\lambda_N) = 0$, when $\ell = 1$ or $2$, and $u \ge 0$}, \label{2.7}
\\[0.5ex]
&\mbox{$q_\ell(u,u) = 0$ and $q_\ell(u,u/\lambda_N) =  \mbox{\f $\dis\frac{u}{2}$}$, when $\ell = N-2$ or $N$, and $u \ge 0$}.\label{2.8}
\end{align}
The functions $q_\ell$ do not depend on the parameter $a > 0$ in (\ref{1.5}), see Proposition B.2 of \cite{SzniWidm26a}, and they are regular versions of the conditional expectations of $x^2_\ell$ under $\mu$, given $|x|^2 = u$ and $|x|^2_{-1} = v$. Namely, one has
\begin{equation}\label{2.9}
\mbox{$E^\mu \big[ x_\ell^2 \, \big| \, |x|^2, |x|^2_{-1}\big] = q_\ell ( |x|^2, |x|^2_{-1}), \mu$-a.s., for $1 \le \ell \le N$}
\end{equation}
(the left member denotes the conditional expectation of $x_\ell^2$ given $|x|^2, |x|^2_{-1}$ under $\mu$).

\medskip
We also refer to Remark \ref{remA.2} for a strenghtened version of this identity. One then defines the differential operator $\wt{A}$ on the open cone $\stackrel{_\circ}{C}$, formally obtained by replacing the terms $x^2_\ell$ by $q_\ell$ in (\ref{1.22}):
\begin{equation}\label{2.10}
\begin{split}
\wt{A} \psi = & \;2a\Big\{\Sigma_\ell \, \lambda_\ell (1 + \delta_\ell) \,q_\ell \, \partial^2_u \psi + 2 \Sigma_\ell (1 + \delta_\ell) \, q_\ell \, \partial^2_{u,v} \psi + \Sigma_\ell \;\mbox{\f $\dis\frac{1}{\lambda_\ell}$} \;(1 + \delta_\ell) \,q_\ell \, \partial^2_v \psi\Big\}
\\
& + \big(a \Sigma_\ell \,\lambda_\ell (1 + \delta_\ell) - 2 \Sigma_\ell \,\lambda_\ell \, q_\ell \big) \, \partial_u \psi + \big(a \Sigma_\ell \,(1 + \delta_\ell) - 2 \Sigma_\ell  \, q_\ell \big) \, \partial_v \psi.
\end{split}
\end{equation}
As established in Section 3 of \cite{SzniWidm26a}, $\wt{A}$ is an elliptic second order differential operator with Lipschitz coefficients in $\stackrel{_\circ}{C}$. Moreover, it gives rise to a diffusion process on $\stackrel{_\circ}{C}$, through a well-posed martingale problem. Namely, letting $C(\IR_+, \stackrel{_\circ}{C})$ stand for the space of continuous $\stackrel{_\circ}{C}$-valued functions on $\IR_+$, $\cF$ for its canonical $\sigma$-algebra, $(\cF_t)_{t \ge 0}$, for its canonical filtration, and $(W_t)_{t \ge 0}$ for its canonical process, one has
\begin{samepage}
\begin{equation}\label{2.11}
\begin{array}{l}
\mbox{$\wt{P}_w (W_0 = w) = 1$ and for any smooth compactly supported function $\psi$ on $\stackrel{_\circ}{C}$},
\\
\mbox{$\psi(W_t) - \dis\int^t_0 \wt{A} \psi(W_s)\, ds, t \ge 0$ is an $(\cF_t)_{t \ge 0}$-martingale under $\wt{P}_w$}.
\end{array}
\end{equation}
Further, see Section 4 of \cite{SzniWidm26a}, the family $\wt{P}_w, w \in \,\stackrel{_\circ}{C}$, constitutes a strong Markov process on the state space $\stackrel{_\circ}{C}$, and see Theorem 4.1 and Proposition 4.3 of \cite{SzniWidm26a},
\begin{equation}\label{2.12}
\begin{array}{l}
\mbox{there is a unique stationary distribution $\wt{\pi}$ on $\stackrel{_\circ}{C}$ for the above Markov process}
\\
\mbox{and it is absolutely continuous with respect to the Lebesgue measure on $\stackrel{_\circ}{C}$}.
\end{array}
\end{equation}
Moreover, if $\alpha > 0$ and $\beta > 0$ are such that
\begin{equation}\label{2.13}
\begin{array}{l}
2(2 \alpha + 1) \;\max\limits_\ell (\lambda_\ell - 1) (1+ \delta_\ell) < \Sigma_\ell (\lambda_\ell - 1) (1+ \delta_\ell) \;\; \mbox{and}
\\
2(2 \beta + 1) \;\max\limits_\ell (\lambda_N - \lambda_\ell) (1+ \delta_\ell) < \Sigma_\ell (\lambda_N- \lambda_\ell) (1+ \delta_\ell),
\end{array}
\end{equation}
then
\begin{equation}\label{2.14}
\dis\int_{\stackrel{_\circ}{C}}\, (u-v)^{-(\alpha +1)} + (\lambda_N v - u)^{-(\beta +1)} + e^{\frac{v}{2a}} d\, \wt{\pi}(w) < \infty.
\end{equation}
The law of the stationary diffusion process will play an important role in Section 5. One thus defines on $C(\IR_+,\stackrel{_\circ}{C})$ the probability
\begin{equation}\label{2.15}
\mbox{$\wt{P} = \dis\int \wt{\pi}(dw) \, \wt{P}_w$ (and one writes $\wt{E}$ for the corresponding expectation)}.
\end{equation}
By Proposition 4.2. of \cite{SzniWidm26a} one has the following characterization of $\wt{P}$:
\begin{equation}\label{2.16}
\begin{array}{l}
\mbox{$\wt{P}$ is the only probability $Q$ on $C(\IR_+, \stackrel{_\circ}{C})$ such that $Q$ is stationary and}
\\[0.5ex]
\mbox{for any smooth compactly supported function $\psi$ on $\stackrel{_\circ}{C}$, under $Q$},
\\
\mbox{$\psi(W_t) - \psi(W_0) - \dis\int^t_0 \wt{A} \psi(W_s) \, ds, t \ge 0$, is an $(\cF_t)_{t \ge 0}$-martingale}.
\end{array}
\end{equation}
Finally, we recall the condensation bound. One first defines
\begin{equation}\label{2.17}
B_0 = a\, \Sigma_\ell (1 + \delta_\ell) \; \mbox{and} \; B_1 = a\, \Sigma_\ell \, \lambda_\ell (1 + \delta_\ell),
\end{equation}
so that $B_0 < B_1$. One knows from Proposition 4.4. of \cite{SzniWidm26a} that
\begin{equation}\label{2.18}
B_0 = 2 \wt{E} [U_0] \; \mbox{and} \; B_1 = 2 \wt{E} [\Sigma_\ell \, \lambda_\ell \, q_\ell(W_0)] .
\end{equation}
The {\it condensation bound}, (see Theorem 5.1 of \cite{SzniWidm26a}), states that 
\begin{equation}\label{2.19}
\begin{array}{l}
\mbox{for any $\ell_0$ in $\{3, \dots, N\}$, writing $\ell_0 = 2i_0$ or $\ell_0 = 2i_0 - 1$, with $2 \le i_0 \le n$,}
\\[0.5ex]
\mbox{(depending on whether $\ell_0$ is even or odd), one has}
\\[0.5ex]
2 \wt{E} [U_0 - V_0] \le \mbox{\f $\dis\frac{B_1 - B_0}{\lambda_{\ell_0} - 1}$} + \mbox{\f $\dis\frac{\lambda_3}{\lambda_3 - 1}$} \;\dis\sum\limits_{1 \le k < i_0 - 1} \; \mbox{\f $\dis\frac{1}{n-k}$} \;B_0 \le  \mbox{\f $\dis\frac{B_1 - B_0}{\lambda_{\ell_0} - 1}$} + \mbox{\f $\dis\frac{\lambda_3}{\lambda_3 - 1}$}  \; \mbox{\f $\dis\frac{\ell_0}{N - \ell_0}$} \;B_0
\end{array}
\end{equation}
(the sum in the middle member is omitted when $i_0 = 2$).
\end{samepage}

The proof of (\ref{2.19}) relies on the important {\it monotonicity property} of the $q_\ell, 3 \le \ell \le N$, see Theorem 2.3 of \cite{SzniWidm26a}. The condensation bound (\ref{2.19}) shows in particular that $\wt{E} [U_0 - V_0]$ is small compared to $\wt{E}[U_0] ( = B_0 / 2)$ when $\ell_0$ can be chosen so that $B_1 / B_0 \ll \lambda_{\ell_0}$ and $\ell_0 \ll N$. The ratio $B_1/B_0$ can be viewed as some ``effective spectral value'' at which the Brownian forcing induced by the $(\beta_\ell)_{1 \le \ell \le N}$ occurs, see (\ref{1.31}) and Remark \ref{rem6.2} 1).

\section{Some controls}
\setcounter{equation}{0}

In this section we collect several estimates on the stationary distribution $\wt{\mu}_\ve$, see (\ref{1.26}), which hold uniformly for $\ve \in (0,1)$ (or even for $\ve > 0$ for some of them). We also quote at the end of the section the basic estimate for the convergence to equilibrium of the diffusion under the sole effect of drift and stirring from Appendix B. Throughout this section, positive constants will a priori depend on $N, (\lambda_\ell)_{1 \le \ell \le N}, a, (\delta_\ell)_{1 \le \ell \le N}$, on the coefficients entering the definition of $b(\cdot)$ in (\ref{1.10}), and on $\kappa$, see (\ref{1.1}), (\ref{1.2}), (\ref{1.5}), (\ref{1.8}), (\ref{1.20}). Dependence on additional parameters will appear in the notation.

\medskip
We begin with some exponential controls on $\wt{\mu}_\ve$, which substantially sharpen (\ref{1.27}). We need some notation. We define the stochastic process on $C(\IR_+,\IR^N)$, see (\ref{1.30}) for notation:
\begin{equation}\label{3.1}
T_t = \Sigma_\ell \, \lambda_\ell \, X^2_{t,\ell} \; (\ge U_t \ge V_t), \; t \ge 0,
\end{equation}
and the positive reals:
\begin{align}
B_0 & = a \, \Sigma_\ell (1 + \delta_\ell), \; B^\prime_0 = \sup\limits_\ell \;a(1 + \delta_\ell) \; (\le B_0), \label{3.2}
\\[1ex]
B_1 & = a \, \Sigma_\ell \, \lambda_\ell (1 + \delta_\ell), \; B^\prime_1 = \sup\limits_\ell \;a\, \lambda_\ell (1 + \delta_\ell) \; (\le B_1). \label{3.3}
\end{align}
They measure the strength of the Brownian forcing attached to the $\beta_\ell(\cdot), 1 \le \ell \le N$, in (\ref{1.31}). The next proposition contains exponential controls on $U_0, V_0$ under $\Pe$, uniformly in $\ve > 0$, which sharpen (\ref{1.27}). These controls will be used repeatedly in the next sections.

\begin{proposition}\label{prop3.1}
For $\ve > 0$ one has
\begin{equation}\label{3.4}
2 \Ee [U_0] = B_0 \; \mbox{and} \; 2 \Ee [T_0] = B_1.
\end{equation}
Further, there is a finite non-negative function $\Phi$ on $\{(z,b,b^\prime) \in \IR^3_+; z b^\prime < 1\}$ such that
\begin{align}
\Ee [\exp\{z V_0\}(1 + U_0)] &\le \Phi (z,B_0,B^\prime_0), \; \mbox{for all $\ve > 0$ and  $0 \le z < 1/B^\prime_0$}, \label{3.5}
\\[1ex]
\Ee [\exp\{z U_0\}(1 + T_0)] &\le \Phi (z,B_1,B^\prime_1), \; \mbox{for all $\ve > 0$ and  $0 \le z < 1/B^\prime_1$}. \label{3.6}
\end{align}
\end{proposition}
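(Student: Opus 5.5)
The plan is to use Proposition \ref{prop1.5} with suitable test functions $\psi(u,v)$, together with the formula (\ref{1.22}) for $\wt{L} g$. Neither the drift $B$ nor the stirring $\cD$ acts on functions of $(|x|^2,|x|^2_{-1})$, so every bound obtained this way is uniform in $\ve>0$.

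\emph{First moments.} Take $\psi(u,v)=v$. By (\ref{1.22}), $\wt{L} g = a\Sigma_\ell(1+\delta_\ell) - 2\Sigma_\ell x_\ell^2 = B_0 - 2|x|^2$. Integrating against $\wt{\mu}_\ve$ and using (\ref{1.38}) gives $2\Ee[U_0]=B_0$. Next take $\psi(u,v)=u$. Then $\wt{L} g = B_1 - 2\Sigma_\ell\lambda_\ell x_\ell^2 = B_1-2T$, and (\ref{1.38}) gives $2\Ee[T_0]=B_1$. Both functions have polynomial growth, so Proposition \ref{prop1.5} applies; integrability comes from (\ref{1.27}) and $|x|^2\le\lambda_N|x|^2_{-1}$. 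This proves (\ref{3.4}).

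\emph{Exponential bound (\ref{3.5}).} Take $\psi(u,v)=e^{zv}$. This is not polynomially growing, so first I would truncate, e.g.\ replace $e^{zv}$ by $e^{z(v\wedge R)}$ suitably smoothed, or by $\sum_{k\le K} (zv)^k/k!$, and pass to the limit by monotone convergence at the end. Formally, (\ref{1.22}) gives
\begin{equation*}
\wt{L} e^{zv} = e^{zv}\Big(2az^2\,\Sigma_\ell \tfrac{1}{\lambda_\ell}(1+\delta_\ell)x_\ell^2 + z\,(B_0-2|x|^2)\Big).
\end{equation*}
Since $a(1+\delta_\ell)/\lambda_\ell\le B'_0$, the first term is at most $2z^2B'_0|x|^2$. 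Hence
\begin{equation*}
\wt{L} e^{zv} \le z\,e^{zv}\big(B_0-2(1-zB'_0)|x|^2\big).
\end{equation*}
Integrating and using (\ref{1.38}) gives
\begin{equation*}
2(1-zB'_0)\,\Ee[e^{zV_0}U_0]\le B_0\,\Ee[e^{zV_0}].
\end{equation*}
To close this, split $\Ee[e^{zV_0}]\le e^{zK}+K^{-1}\Ee[e^{zV_0}U_0]$, using $V_0\le U_0$. Choosing $K=B_0/(1-zB'_0)$ absorbs the second term and bounds $\Ee[e^{zV_0}(1+U_0)]$ by an explicit $\Phi(z,B_0,B'_0)$.

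\emph{Exponential bound (\ref{3.6}).} The argument is the same with $\psi=e^{zu}$:
\begin{equation*}
\wt{L} e^{zu}=e^{zu}\Big(2az^2\,\Sigma_\ell\lambda_\ell(1+\delta_\ell)x_\ell^2+z(B_1-2T)\Big)\le z\,e^{zu}\big(B_1-2(1-zB'_1)T\big),
\end{equation*}
and $U\le T$ gives the same absorption, with the same function $\Phi$.

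The main obstacle is justifying (\ref{1.38}) for these exponentially growing $\psi$. Before the bound is established we only know $e^{V/2a}$ is integrable by (\ref{1.27}), which is weaker than what is needed when $z$ is close to $1/B'_0$. I would handle this with truncated test functions $\psi_R$ that equal $e^{zv}$ for $v\le R$ and grow linearly beyond. For these, Proposition \ref{prop1.5} applies directly. The contributions where $\psi_R''\ne0$ are nonpositive or controlled in the same way, so the inequality survives for each $R$ with constants independent of $R$. Letting $R\to\infty$ with Fatou then finishes the argument.
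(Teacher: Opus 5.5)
Your proof is correct. For (\ref{3.4}) it is the paper's argument, but for (\ref{3.5}), (\ref{3.6}) you take a genuinely different route.

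The paper never applies (\ref{1.38}) to an exponential. It uses $\psi=u^m$ (resp.\ $v^m$), for which Proposition \ref{prop1.5} applies verbatim, since all polynomial moments are a priori finite by (\ref{1.27}). This gives the recursion $\Ee[U_0^m]\le\Ee[T_0U_0^{m-1}]\le\{(m-1)B'_1+\tfrac12 B_1\}\,\Ee[U_0^{m-1}]$. The paper iterates it and sums the exponential series term by term, so no truncation or limiting argument is needed. It also produces a sharper $\Phi$. The main part of its series is binomial: $\sum_m\frac{z^m}{m!}\prod_{0\le p<m}(pb'+\tfrac b2)=(1-zb')^{-b/(2b')}$, a Gamma-type bound that blows up only polynomially as $z\uparrow 1/b'$. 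Your absorption gives something of order $e^{zb/(1-zb')}$ instead.

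Your one-step Lyapunov-type argument buys a closed-form $\Phi$ and a transparent mechanism: the term $-2(1-zB')U$ (resp.\ $T$) beats the constant forcing. The cost is the truncation step, which you rightly flag. To make it precise, replace ``nonpositive or controlled in the same way'' by the actual requirement on the truncation:
\begin{itemize}
\item $\psi_R'\ge 0$ and $\psi_R''\le z\psi_R'$;
\item $\psi_R'\le z e^{zv}$, with $\psi_R'\to z e^{zv}$ pointwise;
\item $\psi_R'$ grows at most polynomially.
\end{itemize}
Under these conditions $\wt L\psi_R\le\psi_R'(V)\,(B_0-2(1-zB'_0)U)$, every expectation involved is finite, and your absorption runs with $\psi_R'(V_0)$ in place of $ze^{zV_0}$, uniformly in $R$. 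Fatou then finishes the argument.

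Your alternative truncation by Taylor polynomials $\psi_K=\sum_{k\le K}(zv)^k/k!$ meets these conditions automatically on $v\ge 0$. Indeed $\psi_K'=z\psi_{K-1}$ and $\psi_K''=z^2\psi_{K-2}\le z\psi_K'$. In that form your argument is essentially the paper's moment recursion written as a generating function.
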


\begin{proof}
The identities (\ref{3.4}) are immediate consequences of (\ref{1.38}) and (\ref{1.22}) with the choices $\psi(u,v) = v$ and $\psi(u,v) = u$.

\begin{samepage}
\medskip
We proceed with the proof of (\ref{3.6}). The choice of $\psi(u,v) = u^m$ with $m \ge 1$ in (\ref{1.38}) and (\ref{1.22}) yields that for arbitrary $\ve > 0$
\begin{equation}\label{3.7}
2a(m-1) \, \Ee [\Sigma_\ell \, \lambda_\ell (1 + \delta_\ell) \, X^2_{0,\ell} \,U_0^{m-2}] + a \, \Sigma_\ell \, \lambda_\ell (1 + \delta_\ell) \, \Ee_0 [U_0^{m-1}] = 2 \Ee [T_0 \,U_0^{m-1}]
\end{equation}
(where the first term is omitted when $m = 1$).
\end{samepage}

\medskip
Noting that $B_1 = 2 \Ee[T_0] \ge 2 \Ee[U_0]$, see (\ref{3.1}), and that by (\ref{3.3}) $a \, \Sigma_\ell \, \lambda_\ell (1 + \delta_\ell) \, X^2_{0,\ell} \le B^\prime_1 U_0$, one finds that for $m \ge 2$ and $\ve > 0$
\begin{equation}\label{3.8}
\Ee [U_0^m] \stackrel{(\ref{3.1})}{\le} \Ee [T_0 \,U_0^{m-1}] \stackrel{(\ref{3.7})}{\le} \Big\{(m-1) \,B^\prime_1 + \fr \;B_1\Big\} \; \Ee [U_0^{m-1}].
\end{equation}
iterating, and using $\Ee[U_0] \le B_1/2$, we find that
\begin{equation}\label{3.9}
\Ee [U_0^m] \le \prod\limits_{0 \le p < m}  \Big(p \,B^\prime_1 + \fr \;B_1\Big), \; \mbox{for $m \ge 1$ and $\ve > 0$},
\end{equation}
and using the second inequality of (\ref{3.8}) with $m$ in place of $m-1$, we find
\begin{equation}\label{3.10}
\Ee [T_0 \,U_0^m] \ \le \prod\limits_{0 \le p \le m}  \Big(p \,B^\prime_1 + \fr \;B_1\Big), \; \mbox{for $m \ge 1$ and $\ve > 0$}.
\end{equation}
We then note that for $z \ge 0$, one has
\begin{equation}\label{3.11}
\begin{array}{l}
\Ee [\exp \{z U_0\}(1 + T_0)] = \Sigma_{m \ge 0} \; \mbox{\f $\dis\frac{z^m}{m!}$} \; \Ee[U_0^m (1 + T_0)] \stackrel{(\ref{3.9}),(\ref{3.10})}{=}
\\[1ex]
\Sigma_{m \ge 0} \;  \mbox{\f $\dis\frac{z^m}{m!}$} \Big\{\prod\limits_{0 \le p < m} \Big(p \,B^\prime_1 + \fr \;B_1\Big)\Big\} \Big(1 + m \, B^\prime + \fr \; B_1\Big).
\end{array}
\end{equation}
Setting for $z,b,b^\prime \ge 0$ with $z b^\prime < 1$
\begin{equation}\label{3.12}
\Phi (z,b,b^\prime) = \Sigma_{m \ge 0} \;  \mbox{\f $\dis\frac{z^m}{m!}$} \;\Big\{\prod\limits_{0 \le p < m} \Big(p b^\prime + \fr \;b\Big)\Big\} \Big(1 + m b^\prime + \fr \;b\Big),
\end{equation}
which is finite by the ratio test, one obtains (\ref{3.6}).

\medskip
The proof of (\ref{3.5}) is similar. Using $\psi(u,v) = v^m$ with $m \ge 1$ in (\ref{1.38}) and (\ref{1.22}), one finds in place of (\ref{3.7}) that for $\ve > 0$,
\begin{equation}\label{3.13}
2a (m-1) \, \Ee\Big[ \Sigma_\ell \; \mbox{\f $\dis\frac{1}{\lambda_\ell}$}\; (1 + \delta_\ell)\, X^2_{0,\ell} \, V_0^{m-2}\Big] + a \Sigma_\ell (1 + \delta_\ell) \, \Ee [V_0^{m-1}] = 2 \Ee[U_0 V_0^{m-1}]
\end{equation}
(where the first term is omitted when $m=1$).

\medskip
One now uses $B_0 = 2 \Ee[U_0] \ge 2 \Ee [V_0], \; \mbox{and} \; a \Sigma_\ell \, \frac{1}{\lambda_\ell} \, (1 + \delta_\ell)\, X^2_{0,\ell} \le B^\prime_0 V_0$, to deduce that for $m \ge 2$ and $\ve > 0$,
\begin{equation}\label{3.14}
\Ee [V_0^m] \le \Ee [U_0 V_0^{m-1}] \stackrel{(\ref{3.13})}{\le} \Big\{ (m-1) \,B^\prime_0 + \fr \; B_0\Big\} \; \Ee[V_0^{m-1}].
\end{equation}
The proof then runs similar to that of (\ref{3.6}) and yields (\ref{3.5}). 
\end{proof}

As an aside, similar bounds could be derived in the context of stationary incompressible Navier-Stokes equations with Brownian forcing on a two-dimensional torus and would sharpen the controls on p.~212 of \cite{KuksShir12}.

\medskip
We now proceed with a result which substantially sharpens (\ref{1.28}), where it is stated that the stationary distribution $\wt{\mu}_\ve$ is absolutely continuous with respect to the Lebesgue measure. Its proof is due to \cite{BedrLiss21}. It will play an important role in showing that the near ``singular set'' where $|x|^2 / |x|^2_{-1}$ gets close to some of the $\lambda_\ell$ (see also Appendix B) has uniformly small measure under $\wt{\mu}_\ve$, $0 < \ve \le 1$.

\begin{proposition}\label{prop3.2}
Letting $\wt{h}_\ve$ stand for the density of $\wt{\mu}_\ve$ with respect to the Lebesgue measure on $\IR^N$, one has
\begin{equation}\label{3.15}
\sup\limits_{0 < \ve \le 1} \; \dis\int \wt{h}_\ve^2 \, dx \le C
\end{equation}
(and of course $\wt{h}_\ve \ge 0$ with $\int h_\ve\, dx = 1$).
\end{proposition}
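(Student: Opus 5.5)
The plan is to adapt the $L^2$ estimate of \cite{BedrLiss21} for stationary densities, exploiting the skew-symmetry of $B$ and the symmetry of $\cD$. The key structural point is that the large operator $\frac{1}{\ve}(B+\kappa\cD)$ acts on the density through its formal adjoint, and that this adjoint does not spoil an energy estimate.

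\emph{Step 1 (adjoint).} The density $\wt{h}_\ve$ solves the stationary Fokker--Planck equation $(\Le)^* \wt{h}_\ve = 0$ in the distributional sense; this follows from the stationarity (\ref{1.26}) and the martingale problem (\ref{1.25}). Since ${\rm div}\, b = 0$ and ${\rm div}\, Z_m = 0$, one has $B^* = -B$ and $Z_m^* = -Z_m$ with respect to Lebesgue measure. Hence $\cD^* = \cD$, and
\[
(\Le)^* = \wt{L}^* - \tfrac{1}{\ve}\,B + \tfrac{\kappa}{\ve}\,\cD .
\]

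\emph{Step 2 (energy identity).} Next I would pair the equation with $\wt{h}_\ve$, or rather with a truncation $\wt{h}_\ve \wedge K$ or a smooth cutoff of it; the details are handled below. The term $\int B\wt{h}_\ve\,\wt{h}_\ve\,dx = \frac12\int B(\wt{h}_\ve^2)\,dx$ vanishes because $B$ is divergence free. The term $\int \cD \wt{h}_\ve\,\wt{h}_\ve\,dx = -\frac12 \Sigma_m \int (Z_m \wt{h}_\ve)^2\,dx$ is nonpositive, and its factor $\kappa/\ve$ has a good sign, so it can be dropped. It remains to compute with $\wt{L}^* h = \Sigma_\ell \lambda_\ell\big(\frac a2(1+\delta_\ell)\partial_\ell^2 h + \partial_\ell(x_\ell h)\big)$. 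Integrating by parts gives
\[
0 = -\Sigma_\ell \lambda_\ell \tfrac a2 (1+\delta_\ell)\int (\partial_\ell h)^2\,dx + \tfrac12 \Sigma_\ell \lambda_\ell \int h^2\,dx ,
\]
using $\int \partial_\ell(x_\ell h)\,h\,dx = \frac12\int h^2\,dx$. This yields the bound
\[
\int |\nabla \wt{h}_\ve|^2\,dx \le C \int \wt{h}_\ve^2\,dx ,
\]
uniformly in $\ve$, and the same inequality holds for $\ve > 1$.

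\emph{Step 3 (closing).} To close, I would combine the Nash inequality
\[
\|h\|_2^{2+4/N} \le C\,\|\nabla h\|_2^2\,\|h\|_1^{4/N}
\]
with $\|h\|_1 = 1$. This gives $\|h\|_2^{2+4/N} \le C\|h\|_2^2$, hence $\|h\|_2^{4/N} \le C$, which is (\ref{3.15}), provided $\|h\|_2 < \infty$ is known a priori. Alternatively, one can split the space into a large ball, controlled by the moment bounds (\ref{3.5}), and its complement. This circumvents the need for a priori finiteness, as follows. Write $\|h\|_2^2 \le \|h \wedge K\|_2^2 + \int h\,1_{h>K}$, and apply the truncated energy inequality to $h\wedge K$.

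\emph{Main obstacle.} The main difficulty is justifying the pairing: a priori one only knows $\wt{h}_\ve \in L^1$ with exponential moments. For $\ve$ fixed, $\Le$ is elliptic with smooth polynomial coefficients, so hypoellipticity (Weyl-type regularity) makes $\wt{h}_\ve$ smooth. I would then test against $\phi_R^2\, G(\wt{h}_\ve)$, with $\phi_R$ a cutoff on $B(0,R)$ and $G(s) = s\wedge K$. The commutator terms involve $B\phi_R$ and $Z_m\phi_R$, carry factors $1/\ve$, and grow like $|x|^2/R$. I would control them through the exponential moments (\ref{3.5}), (\ref{3.6}), whose bounds are uniform in $\ve$. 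Letting first $R\to\infty$ for fixed $\ve$ removes the $1/\ve$ issue entirely, because for fixed $\ve$ the boundary terms tend to zero. The resulting inequality is free of $\ve$, which then gives uniformity. Finally $K\to\infty$ by monotone convergence. The $L^2$ bound is then finite and uniform over $0<\ve\le1$.
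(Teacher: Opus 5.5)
Your proposal is correct and follows essentially the paper's route. The paper simply adapts the Bedrossian--Liss energy estimate, noting that ellipticity (from $1+\delta_\ell>0$) gives direct gradient control and that the stirring term $\frac{\kappa}{\ve}\Sigma_m\|Z_m \wt{h}_\ve\|_2^2$ only helps; your energy identity closed by Nash's inequality, justified by smoothness of $\wt{h}_\ve$ for fixed $\ve$ and the truncation/cutoff limits ($R\to\infty$ first, then $K\to\infty$ by monotone convergence), is exactly that argument written out, and in fact gives a bound uniform in all $\ve>0$. One small slip: the inequality $\|h\|_2^2\le\|h\wedge K\|_2^2+\int h\,1_{\{h>K\}}\,dx$ is false in general, but it is not needed, since the $K$-uniform Nash bound on $\|h\wedge K\|_2$ plus monotone convergence already yields (\ref{3.15}).
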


\begin{proof}
The proof is the adaptation to our simpler context of the proof of (4.4) on pp.~507-508 of \cite{BedrLiss21}, noting that we are here in an elliptic set-up due to (\ref{1.8}), and the stirring vector fields present here, see (\ref{1.16}) - (\ref{1.17}), actually improve the energy estimate corresponding to (4.6) of \cite{BedrLiss21}. Incidentally, the notation of \cite{BedrLiss21} is matched with our set-up by comparing (1.3) of  \cite{BedrLiss21} with (\ref{1.31}) here. In the notation of  \cite{BedrLiss21} we are interested in the case $\alpha = 0, A = 0$, $B = \Lambda, N = -b(\cdot)$, and the vectors $Z_j, 1 \le j \le r$ of  \cite{BedrLiss21} correspond to $(\lambda_\ell \,a(1 + \delta_\ell))^{1/2} e_\ell, 1 \le \ell \le N$, here. In the present article, we also have the stirring vector fields $Z_m, 1 \le m \le M$, which improve the estimates needed in the proof of (\ref{3.15}) when adapting the proof of (4.4) in  \cite{BedrLiss21}.
\end{proof}

The next proposition will be especially useful in Section 5, in particular when using Proposition \ref{propB.2}, see also (\ref{5.24}), (\ref{5.47}) to benefit from a rapid convergence to equilibrium on $\IX_{u,v}$, see (\ref{A.1}), under the sole action of drift and stirring. Given $0 < u_{\min} < u_{\max}$ and $\eta > 0$, we introduce the ``untamed subset'' of $\IR^N$
\begin{equation}\label{3.16}
\begin{array}{l}
U(u_{\min}, u_{\max}, \eta) =
\\
\big\{x \in \IR^N; u_{\min} \le |x|^2 \le u_{\max}, \;\mbox{and $\big| \,|x|^2 / |x|^2_{-1} - \lambda_\ell  \big| \ge \eta$,  for all $1 \le \ell \le N\big\}^c$},
\end{array}
\end{equation}
as well as
\begin{equation}\label{3.17}
\gamma(u_{\min}, u_{\max}, \eta) = \sup\limits_{0 < \ve \le 1} \;\wt{\mu}_\ve \big(U (u_{\min}, u_{\max}, \eta)\big).
\end{equation}

\begin{proposition}\label{prop3.3}
\begin{equation}\label{3.18}
\lim\limits_{u_{\min} \r 0, u_{\max} \r \infty, \eta \r 0} \gamma (u_{\min}, u_{\max}, \eta) = 0.
\end{equation}
\end{proposition}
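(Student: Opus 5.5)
The untamed set $U(u_{\min},u_{\max},\eta)$ is contained in the union of three sets: $\{|x|^2 < u_{\min}\}$, $\{|x|^2 > u_{\max}\}$, and
\[
S(u_{\max},\eta)=\Big\{x;\; |x|^2\le u_{\max},\ \big|\,|x|^2/|x|^2_{-1}-\lambda_\ell\big|<\eta \ \mbox{for some } \ell\Big\}.
\]
The plan is to bound the $\wt{\mu}_\ve$-measure of each piece uniformly in $0<\ve\le 1$. The large values and the singular set will use Proposition \ref{prop3.1} and Proposition \ref{prop3.2} respectively; the small values need one extra step.

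For the large values, (\ref{3.4}) and Markov's inequality give $\wt{\mu}_\ve(|x|^2>u_{\max})\le B_0/(2u_{\max})$ for all $\ve>0$, which tends to $0$ as $u_{\max}\to\infty$.

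For the small values, Proposition \ref{prop3.2} and Cauchy--Schwarz give, for any Borel set $A\subseteq\IR^N$,
\[
\wt{\mu}_\ve(A)=\int_A \wt{h}_\ve\,dx\le |A|^{1/2}\Big(\int \wt{h}_\ve^2\,dx\Big)^{1/2}\le C^{1/2}|A|^{1/2},
\]
where $|A|$ is Lebesgue measure. Taking $A=\{|x|^2<u_{\min}\}$, a ball of volume $c_N u_{\min}^{N/2}$, yields $\wt{\mu}_\ve(|x|^2<u_{\min})\le C' u_{\min}^{N/4}\to 0$, uniformly in $0<\ve\le1$.

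For the singular set, we apply the same Cauchy--Schwarz bound with $A=S(u_{\max},\eta)$, so it suffices to show that $|S(u_{\max},\eta)|\to0$ as $\eta\to0$ for fixed $u_{\max}$. Since $S(u_{\max},\eta)$ decreases as $\eta\downarrow0$ and lies in a bounded ball, dominated convergence reduces this to showing that $\bigcap_{\eta>0}S(u_{\max},\eta)\subseteq\{x\neq 0;\ |x|^2=\lambda_\ell|x|^2_{-1}\ \mbox{for some }\ell\}\cup\{0\}$ is Lebesgue-null. Each set $\{|x|^2-\lambda_\ell|x|^2_{-1}=0\}=\{\sum_k(1-\lambda_\ell/\lambda_k)x_k^2=0\}$ is the zero set of a quadratic form. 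This form is not identically zero: for $\ell\le 2$ the coefficient at $k=N$ is nonzero, and for $\ell\ge3$ the coefficient at $k=1$ is nonzero. A nontrivial polynomial has a null zero set, which gives the claim. One caveat is that on $S$ the ratio $|x|^2/|x|^2_{-1}$ must be defined, so the point $x=0$ is excluded or handled separately, but it is null anyway.

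Combining the three pieces, for every $\delta>0$ we choose $u_{\max}$ large, then $u_{\min}$ small, then $\eta$ small, and obtain $\gamma(u_{\min},u_{\max},\eta)\le\delta$. One point needs care here: the limit in (\ref{3.18}) is joint, so we use the monotonicity of $\gamma$ in each argument. $\gamma$ decreases as $u_{\min}$ decreases, as $u_{\max}$ increases, and as $\eta$ decreases, because $U$ shrinks in each case. The main obstacle is the uniform-in-$\ve$ control of the measure of the thin singular set, and that is exactly what the uniform $L^2$ bound on $\wt{h}_\ve$ from Proposition \ref{prop3.2} supplies. Everything else is elementary.
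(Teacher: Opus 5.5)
Your proof is correct and follows essentially the same route as the paper: Markov's inequality via (\ref{3.4}) for large $|x|^2$, then Cauchy--Schwarz with the uniform $L^2$ bound of Proposition \ref{prop3.2}, applied both to small $|x|^2$ and to the truncated near-singular set, and finally combining the three pieces. Where the paper only says ``a similar argument'' for (\ref{3.22}) and passes over the joint limit, you supply the details: the truncated near-singular set has Lebesgue measure tending to $0$ because the zero sets of the non-trivial quadratic forms $\sum_k (1-\lambda_\ell/\lambda_k)\,x_k^2$ are null, and the monotonicity of $\gamma$ in each argument turns the three separate limits into the joint one.
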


\medskip
\begin{proof}
Note that $B_0$ and $B_1$ are constants in view of the convention stated at the beginning of this section. Thus, by (\ref{3.4}) and the Chebychev Inequality, we see that
\begin{equation}\label{3.19}
\lim\limits_{E \r \infty} \; \sup\limits_{\ve > 0} \; \wt{\mu}_\ve (|x|^2 \ge E) = 0.
\end{equation}
Moreover, by the Cauchy-Schwarz Inequality and Proposition \ref{prop3.2}, we find that for $\ve \in (0,1]$ and $e > 0$ one has
\begin{equation}\label{3.20}
\wt{\mu}_\ve (|x|^2 \le e) = \dis\int_{|x|^2 \le e} \wt{h}_\ve\, dx \le \Big(\dis\int \wt{h}^2_\ve\, dx\Big)^{1/2} | \{x; |x|^2 \le e\}|^{1/2} \le C \,|\{x; |x|^2 \le e\}|^{1/2},
\end{equation}
and hence
\begin{equation}\label{3.21}
\lim\limits_{e \r 0} \; \sup\limits_{0 < \ve \le 1} \wt{\mu}_\ve ( | x|^2 \le e) = 0.
\end{equation}
Then, with the help of (\ref{3.19}), (\ref{3.20}) and a similar argument as above
\begin{equation}\label{3.22}
\lim\limits_{\eta \r 0} \; \sup\limits_{0 < \ve \le 1} \;\wt{\mu}_\ve (\{x \not= 0; \;\mbox{for some} \; 1 \le \ell \le N, |x|^2 / |x|^2_{-1} \in [\lambda_\ell - \eta, \lambda_\ell + \eta]\}) = 0.
\end{equation}
Together with (\ref{3.19}), (\ref{3.20}) this proves (\ref{3.18}).
\end{proof}

\medskip
We finally recall the main control concerning the effect of the sole action of {\it drift and stirring} from Appendix B. Given $u_{\min}, u_{\max}$ and $\eta$ as above (\ref{3.16}), we introduce the good set
\begin{equation}\label{3.23}
\begin{split}
G(u_{\min}, u_{\max}, \eta)  = &\;U (u_{\min}, u_{\max}, \eta)^c
\\
 = &\; \big\{x \in \IR^N; u_{\min} \le |x|^2 \le u_{\max}, \big| \, |x|^2 / |x|^2_{-1} - \lambda_\ell \, \big| \, \ge \eta,
\\
&\quad  \mbox{for all $1 \le \ell \le N\big\}$}.
\end{split}
\end{equation}
Then, given $x$ in $G(u_{\min}, u_{\max}, \eta)$, setting $u = |x|^2, v = |x|^2_{-1}$, we recall the space $\IX_{u,v}$ from (\ref{A.1}), and the regular conditional probability $\mu_{u,v}$ of $\mu$ given $| \cdot |^2 = u$ and $| \cdot |^2_{-1} = v$, see (\ref{A.7}), (\ref{A.12}). The diffusion with generator $\Gamma = \kappa \, \cD + B$ (see (\ref{1.10}), (\ref{1.19})) preserves $\IX_{u,v}$ and has a transition density $p_t(y,z), t > 0$, with respect to $\mu_{u,v}(dz)$ on $\IX_{u,v}$. By Proposition \ref{B.2}, one has for all $y,z$ in $\IX_{u,v}$ and $t \ge 1$,
\begin{equation}\label{3.24}
|p_t (y,z) - 1| \le c_2(u_{\min}, u_{\max},\eta) \; \exp\{-c_3(u_{\min}, u_{\max},\eta) \;t\}.
\end{equation}
This estimate will be especially useful in the proof of Theorem \ref{theo5.1}.

\section{Tightness}
\setcounter{equation}{0}

In this short section we prove the tightness of the laws of the two-dimensional process $(W_t)_{t \ge 0}$ in (\ref{1.30}) under the stationary laws $\Pe, \ve > 0$, see (\ref{1.29}). In essence, this process corresponds to the time evolution of the ``enstrophy and energy'' for the stationary diffusions with generator $\Le$, see (\ref{1.21}). We keep the same convention concerning constants as stated at the beginning of Section 3.

\medskip
We recall (\ref{2.1}) for the definition of the closed cone $C$ in $\IR^2$. We endow the space $C(\IR_+, C)$ of continuous $C$-valued functions on $\IR_+$ with the topology of uniform convergence on compact intervals, and with its usual Borel $\sigma$-algebra, which coincides with the $\sigma$-algebra generated by the canonical coordinates (see for instance Chapter 16 of \cite{Kall02}). We denote by
\begin{equation}\label{4.1}
\mbox{$\wt{P}^\ve$ the law of $(W_t)_{t \ge 0}$ on $C(\IR_+,C)$ under $\Pe$, for $\ve > 0$}.
\end{equation}
The main object of this section is the proof of
\begin{theorem}\label{theo4.1}
\begin{equation}\label{4.2}
\mbox{The laws $\wt{P}^\ve, \ve > 0$, are tight}.
\end{equation}
\end{theorem}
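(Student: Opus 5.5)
We will use the standard tightness criterion on $C(\IR_+,C)$. By stationarity the law of $W_0$ under $\Pe$ does not depend on the starting time. It is tight uniformly in $\ve$, since $2\Ee[U_0] = B_0$ by (\ref{3.4}) and $0 \le V_0 \le U_0$; Chebychev's inequality then gives uniform tightness of the initial laws. It therefore suffices to control the moduli of continuity of $U$ and $V$ on each interval $[0,T]$, uniformly in $\ve > 0$. We will use the Kolmogorov–Chentsov criterion and aim for a bound
\begin{equation*}
\Ee\big[|U_t - U_s|^4 + |V_t - V_s|^4\big] \le C\,|t-s|^2, \quad 0 \le s \le t \le s+1,
\end{equation*}
with $C$ independent of $\ve$.

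The key point is that $\ve$ disappears from the dynamics of $W$. By Lemma \ref{lem1.2}, for $g(x) = \psi(|x|^2,|x|^2_{-1})$ one has $\Le g = \wt{L} g$, since $B$ and $\cD$ annihilate such $g$ by (\ref{1.18}). Applying (\ref{1.39}) with $\psi(u,v) = u$ and $\psi(u,v) = v$ gives the decompositions
\begin{equation*}
U_t = U_0 + \int_0^t \big(B_1 - 2T_s\big)\,ds + M^u_t, \qquad V_t = V_0 + \int_0^t \big(B_0 - 2U_s\big)\,ds + M^v_t .
\end{equation*}
Here $M^u, M^v$ are continuous local martingales. By (\ref{1.40}) their brackets are $\int_0^t 4a\,\Sigma_\ell \lambda_\ell(1+\delta_\ell)X^2_{s,\ell}\,ds \le 4B'_1\int_0^t U_s\,ds$ and $\int_0^t 4a\,\Sigma_\ell \lambda_\ell^{-1}(1+\delta_\ell)X^2_{s,\ell}\,ds \le 4B'_0\int_0^t V_s\,ds$ respectively. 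As in the proof of Proposition \ref{prop1.5}, the moment bounds make these true martingales. Note that the stirring term $\frac{\kappa}{\ve}\cD$ does not enter the bracket either, because $Z_m g = 0$.

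For the drift parts, Hölder's inequality and stationarity give
\begin{equation*}
\Ee\Big[\Big|\int_s^t (B_1 - 2T_r)\,dr\Big|^4\Big] \le (t-s)^4\,\Ee\big[(B_1+2T_0)^4\big].
\end{equation*}
So we need a uniform bound on $\Ee[T_0^4]$. This is the step I expect to be the main obstacle, because Proposition \ref{prop3.1} only controls $\Ee[T_0 e^{zU_0}]$, i.e. essentially first moments of $T_0$. There are two ways to handle it. The first is to use the weaker exponent: bound $\Ee[|\int_s^t T_r\,dr|^{p}]$ for $p$ slightly above $1$, or even work only with first moments of the drift together with the Aldous-type criterion. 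The second, which I will try first, is to truncate. On $\{\sup_{[0,T]} U \le K\}$ we have $T \le \lambda_N U \le \lambda_N K$, so the drift has increments at most $(B_1 + 2\lambda_N K)|t-s|$. We then use Doob's inequality together with the uniform exponential moments (\ref{3.6}) to show that $\Pe(\sup_{[0,T]}U > K)$ is small uniformly in $\ve$. For this, note that $\sup U$ is bounded by $U_0$ plus $\int_0^T B_1\,dr$ (since $-2T \le 0$) plus $\sup |M^u|$, whose second moment is at most $4B'_1 T B_0/2$ by stationarity. This gives $\Pe(\sup_{[0,T]}U > K) \le C_T/K$ uniformly in $\ve$.

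For the martingale parts, the Burkholder–Davis–Gundy inequality and stationarity give
\begin{equation*}
\Ee\big[|M^u_t - M^u_s|^4\big] \le C\,\Ee\Big[\Big(\int_s^t U_r\,dr\Big)^2\Big] \le C\,(t-s)^2\,\Ee[U_0^2],
\end{equation*}
which is uniformly bounded by (\ref{3.6}); the analogous bound holds for $M^v$. Combining with the truncation, we obtain: for each $T$, $\eta > 0$ and $\rho > 0$ there exist $\delta > 0$ and $K$ such that, uniformly in $\ve > 0$, the event $\{\sup U \le K\} \cap \{\text{modulus}_\delta(W) \le \rho\}$ has probability at least $1-\eta$. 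The drift part is controlled deterministically on $\{\sup U \le K\}$, and the martingale part by Kolmogorov's criterion applied to $M^u$ and $M^v$. Finally, $W$ takes values in the closed cone $C$, so by the Arzelà–Ascoli characterization the laws $\wt{P}^\ve$, $\ve > 0$, are tight on $C(\IR_+,C)$.
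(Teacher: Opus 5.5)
Your proof is correct, but the obstacle you identify is not real. Once it is removed, the plan you set aside is exactly the paper's proof. You write that (\ref{3.6}) gives ``essentially first moments of $T_0$''. In fact $T_0 \le \lambda_N U_0$, which you use yourself a few lines later, and (\ref{3.6}) gives uniform exponential, hence all polynomial, moments of $U_0$. So $\sup_{\ve>0}\Ee[T_0^4] \le \lambda_N^4 \sup_{\ve>0}\Ee[U_0^4] < \infty$ follows at once.

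The paper then argues directly:
\begin{itemize}
\item H\"older's inequality on the drift term;
\item Burkholder--Davis--Gundy on the martingale $M$, using $\langle M\rangle_t \le 4a\int_0^t T_s\,ds$;
\item this gives $\Ee[\sup_{s\le t}(U_s-U_0)^4] \le c\,t^4\,\Ee[(1+T_0)^4] + c\,t^2\,\Ee[T_0^2] \le c\,t^2$ for $t \le 1$, and the same for $V$;
\item Kolmogorov--Chentsov then concludes.
\end{itemize}

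Your truncation route is also valid. You localize on $\{\sup_{[0,T]} U \le K\}$, where both drifts are deterministically Lipschitz, and bound $\Pe(\sup_{[0,T]}U > K)$ through Doob's inequality and first moments. You apply Kolmogorov--Chentsov only to $M^u$ and $M^v$, and finish with the Arzel\`a--Ascoli modulus criterion. It has the mild advantage of needing only $\sup_\ve \Ee[U_0^2] < \infty$ rather than fourth moments.

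It is longer, however, and yields only a qualitative modulus-of-continuity statement. The paper's quantitative bound (\ref{4.10}) is reused later in (\ref{5.33}) to estimate $\Pe[\tau_0 < t_1]$. Tightness alone would also suffice there, since only $\tau \to 0$ matters.
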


\begin{proof}
We first note that $C(\IR_+,C)$ sits as a closed subspace of $C(\IR_+, \IR^2)$ endowed with the topology of uniform convergence on compact intervals, 
and it thus suffices to show the tightness of the laws of $(W_t)_{t \ge 0} = ((U_t, V_t))_{t \ge 0}$ as a continuous $\IR^2$-valued process under $\Pe, \ve > 0$. The first identity in (\ref{3.4}) and the inequality $U_0 \ge V_0 \ge 0$ show that
\begin{equation}\label{4.3}
\mbox{the laws of $W_0 = (U_0,V_0)$ on $\IR^2$ under $\Pe, \ve > 0$, are tight}.
\end{equation}
To complete the proof of (\ref{4.2}) we will show that
\begin{align}
& \sup\limits_{\ve > 0, s \ge 0} \; \Ee[ | U_{t + s} - U_s|^4] \le c \, t^2, \; \mbox{for $0 \le t \le 1$}, \label {4.4}
\\[1ex]
& \sup\limits_{\ve > 0, s \ge 0} \; \Ee[ | U_{t + s} - V_s|^4] \le c \, t^2, \; \mbox{for $0 \le t \le 1$}, \label {4.5}
\end{align}
and the claim will come from the application of the Kolmogorov-Chentsov Theorem, see \cite{Kall02}, p.~313.

\medskip
We first prove (\ref{4.4}). By stationarity we can assume that $s = 0$. We apply (\ref{1.39}), (\ref{1.40}) (see also below (\ref{1.40})), to the case $\psi(u,v) = u$ and find that with the notation (\ref{3.1}), (\ref{3.3}), under $\Pe, \ve > 0$,
\begin{equation}\label{4.6}
\mbox{$M_t = U_t - \dis\int^t_0 B_1 - 2T_s \, ds, t \ge 0$, is a continuous $(\cG_t)_{t \ge 0}$-martingale},
\end{equation}
with bracket process
\begin{equation}\label{4.7}
\langle M \rangle_t = 4 a  \dis\int^t_0 \Sigma_\ell \, \lambda_\ell (1 + \delta_\ell) \, X^2_{s,\ell} \, ds \stackrel{(\ref{1.8})}{\le} 4 a \dis\int^t_0 T_s \, ds, \; \mbox{for $t \ge 0$}.
\end{equation}
We then find that for $t > 0$
\begin{equation}\label{4.8}
\begin{split}
(U_t - U_0)^4 & \le c \,\Big(\dis\int^t_0 B_1 - 2 T_s \, ds\Big)^4 + c (M_t - M_0)^4
\\
& \le c\, t^3 \dis\int^t_0 (B_1 - 2 T_s)^4 \, ds + c (M_t - M_0)^4 .
\end{split}
\end{equation}
By the Burkholder-Davis-Gundy Inequalities, see \cite{Kall02}, p.~333, one has
\begin{equation}\label{4.9}
\Ee [ \sup\limits_{0 \le s \le t} (M_s - M_0)^4] \le c \, \Ee [\langle M\rangle^2_t] \stackrel{(\ref{4.7})}{\le} c \, t^{2} \, \Ee[T_0^2] .
\end{equation}
Hence, we find that for $0 \le  t \le 1$,
\begin{equation}\label{4.10}
\Ee [ \sup\limits_{0 \le s \le t} (U_s - U_0)^4] \le c \,t^4 \, \Ee [(1 + T_0)^4] + c t^2 \,\Ee [T_0^2] \le c\, t^2,
\end{equation}
using $T_0 \le c \,U_0$ (recall that $\lambda_N$ is a constant by the convention stated at the beginning of Section 3) and (\ref{3.6}) in the last step.

\medskip
The inequality (\ref{4.5}) is proved in a similar (even simpler) fashion. This completes the proof of Theorem \ref{theo4.1}.
\end{proof}

\section{The weak convergence result}
\setcounter{equation}{0}

In this section we prove a central result of this article, namely the weak convergence of the $\wt{P}^\ve$, $\ve > 0$, see (\ref{4.1}), to the law $\wt{P}$ in (\ref{2.15}). This provides a meaning to the interpretation of $\wt{P}$ as the law of an {\it effective enstrophy-energy diffusion process}, and offers a rational for the averaging procedure consisting in the replacement of $x^2_\ell$ in (\ref{1.22}) by $q_\ell$ in (\ref{2.10}). We keep the same convention as stated at the beginning of Section 3 concerning constants.

\medskip
Before stating our main result, we begin with some observations. We view $C(\IR_+,\stackrel{_\circ}{C})$ as a Borel subset of $C(\IR_+,C)$ (made of trajectories with infinite entrance time in $\stackrel{_\circ}{C}\,\!\!\!^c$), and thus we view $\wt{P}$ in (\ref{2.15}) as a probability on $C(\IR_+,C)$ giving full mass to $C(\IR_+,\stackrel{_\circ}{C})$. The laws of $(\wt{P}_\ve)_{\ve > 0}$ in (\ref{4.1}) are also defined on $C(\IR_+,C)$. The main result of this section is

\begin{theorem}\label{theo5.1}
\begin{equation}\label{5.1}
\mbox{As $\ve \r 0$, the laws $\wt{P}^\ve$converge weakly to $\wt{P}$}.
\end{equation}
\end{theorem}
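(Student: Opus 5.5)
By Theorem \ref{theo4.1} the family $\wt{P}^\ve$, $\ve > 0$, is tight. So it suffices to show that every limit point $Q$ along a sequence $\ve_n \r 0$ equals $\wt{P}$. For this I would verify the hypotheses of the characterization (\ref{2.16}), in three steps.

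\emph{Step 1: $Q$ lives on the open cone and is stationary.} Stationarity is inherited from the stationarity of the $\wt{P}^{\ve_n}$. To see that $Q$ gives full mass to $C(\IR_+,\stackrel{_\circ}{C})$, I would first use (\ref{3.21}), (\ref{3.22}) with $\ell = 1$ and $\ell = N$. Since $\{u = v\}$ and $\{u = \lambda_N v\}$ correspond to $|x|^2/|x|^2_{-1} \in \{1, \lambda_N\}$, these give that the one-dimensional marginal of $Q$ charges neither $\partial C$ nor $\{0\}$. Stationarity then gives that $Q(W_t \in \partial C) = 0$ for every $t$. Upgrading this to ``no hitting of $\partial C$ on $[0,T]$'' is a delicate point, since $C(\IR_+,\stackrel{_\circ}{C})$ is not closed. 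I would handle it with the localized martingale property established in Step 2. On the event that the path stays in a compact subset of $\stackrel{_\circ}{C}$ before a stopping time, $Q$ solves the $\wt{A}$-martingale problem up to exit. Combining this with the uniqueness and stationarity of $\wt{\pi}$ and the fact that $\wt{P}_w$ never exits $\stackrel{_\circ}{C}$, the stopped laws agree with $\wt{P}$, and letting the compacts increase shows there is no loss of mass. An alternative is to check that (\ref{2.16}) only needs $Q$ stationary with a marginal not charging $\partial C$; I would verify which form Proposition 4.2 of \cite{SzniWidm26a} actually requires.

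\emph{Step 2: the martingale property.} Fix $\psi$ smooth and compactly supported in $\stackrel{_\circ}{C}$, and set $g = \psi(|x|^2,|x|^2_{-1})$. By Lemma \ref{lem1.2}, $g(X_t) - g(X_0) - \int_0^t \wt{L} g(X_s)\,ds$ is a martingale under $\Pe$. Here $\wt{L} g(x) = F(x)$ is polynomial in the variables $x_\ell^2$, with coefficients depending on $w = (|x|^2, |x|^2_{-1})$. Define
\[
\wt{A}\psi(w) = \bar F(w) = \int F \, d\mu_{u,v}
\]
using (\ref{2.9}) and Remark \ref{remA.2}. Fix $0 \le s_1 < \dots < s_k \le s < t$ and $\phi$ bounded continuous on $C^k$. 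The problem reduces to showing
\[
\Ee\Big[\phi(W_{s_1},\dots,W_{s_k}) \dis\int_s^t \big(F(X_r) - \bar F(W_r)\big)\, dr\Big] \longrightarrow 0 .
\]
Granted this, the uniform integrability coming from (\ref{3.5}), (\ref{3.6}) and the continuity of $\bar F$ on the support of $\psi$ (Lipschitz $q_\ell$) let me pass to the limit in the martingale relation along $\ve_n$.

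\emph{Step 3: averaging, the main obstacle.} I would split $[s,t]$ into blocks of length $\tau_\ve = \ve T$, with $T$ large but $\tau_\ve \r 0$. On each block, the drift and stirring act with speed $1/\ve$, while $W$ moves only $O(\sqrt{\tau_\ve})$ by (\ref{4.4}), (\ref{4.5}).

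Consider a block start $X_{r_0}$ lying in the good set $G(u_{\min},u_{\max},\eta)$. Compare $X$ on the block with the pure $\Gamma = B + \kappa\cD$ dynamics, time-changed by $1/\ve$, started from $X_{r_0}$. The OU part $\wt{L}$ contributes a perturbation of order $\tau_\ve$ over the block. This comparison can be made by a synchronous coupling in the Stratonovich form (\ref{1.31}), or via Duhamel on the generator. Then apply (\ref{3.24}) with time $T \ge 1$: the conditional average of $F(X_r)$ over the block is within $c_2 e^{-c_3 T}$ of $\bar F(W_{r_0})$, up to errors vanishing as $\ve \r 0$.

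There is a further difficulty because $\IX_{u,v}$ moves. I would control it by the Lipschitz continuity of $\bar F$ on the good set, together with the smallness of the $W$ increments over one block.

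Blocks starting in the untamed set $U(u_{\min},u_{\max},\eta)$ are handled by stationarity. Their total contribution is bounded by $(t-s)$ times $\Ee[|F - \bar F|(X_0)\,1_U(X_0)]$. By Hölder's inequality with the moment bounds (\ref{3.5}), (\ref{3.6}), this is at most a power of $\gamma(u_{\min},u_{\max},\eta)$, which tends to $0$ by Proposition \ref{prop3.3}.

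The order of limits is: first $\ve \r 0$, then $T \r \infty$, then $u_{\min} \r 0$, $u_{\max} \r \infty$, $\eta \r 0$. This yields the convergence displayed in Step 2, so $Q = \wt{P}$ by (\ref{2.16}).

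The crux is the block-wise coupling with uniform constants on the good set. It is exactly there that the constants $c_2$, $c_3$ in (\ref{3.24}) and the $L^2$ density bound of Proposition \ref{prop3.2} are indispensable.
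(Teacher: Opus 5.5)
Your proposal is correct and has the same structure as the paper's proof. Both use tightness and identify limit points through (\ref{2.16}). Both split time into blocks on which $X$ is synchronously coupled to the pure $B+\kappa\cD$ dynamics started at the block's left endpoint. Both apply Proposition~\ref{propB.2} on good blocks and Proposition~\ref{prop3.3} on untamed ones, and use tightness to control the motion of $W$ within a block.

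\textbf{Block scale.} The main technical difference is here. The paper uses $K(\ve)\sim\ve^{-1}(\log\frac{1}{\ve})^{-\nu}$ blocks with $\nu<\frac12$. This makes $\tau/\ve\to\infty$, which is needed for $\ve K\to0$ in (\ref{5.26}). It also keeps $\tau/\ve$ small enough that the Gronwall factor $\exp\{c(\tau/\ve+1)\tau/\ve\}$ of (\ref{5.43}) is beaten by $\tau^{3/2}/\ve$ in (\ref{5.44}). Your blocks of length $\ve T$, with the iterated limits $\ve\to0$, then $T\to\infty$, then $u_{\min},u_{\max},\eta$, avoid this balancing and are equally valid.

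\textbf{Bounded integrand.} The paper first reduces to averaging the unbounded quantity $X_\ell^2-q_\ell(W)$, hence its repeated use of (\ref{3.6}) in (\ref{5.24}) and (\ref{5.32}). You keep the bounded $F=\wt{L}g$, which lets you bound the untamed blocks directly by $2\|F\|_\infty(t-s)\gamma$. Your stated bound via $\Ee[|F-\bar F|(X_0)1_U(X_0)]$ is not literally right, since the integrand is evaluated along the block and not at its start. You still need to stop $X$ at exit from a ball for the coupling, because $b$ and the $Z_m$ are only locally Lipschitz; (\ref{4.10}) controls the exit probability.

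\textbf{Non-hitting of $\partial C$.} The paper uses the Lyapunov--Foster function of the companion article together with Meyn--Tweedie. Your localization argument also works: local uniqueness up to exit from compacts, plus non-explosion of $\wt{P}_w$, integrated against the law of $W_0$ under $Q$. This step cannot be skipped, because (\ref{2.16}) concerns laws on $C(\IR_+,\stackrel{_\circ}{C})$.

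\textbf{Two rates to correct in Step~3.} Neither affects the conclusion.
\begin{itemize}
\item Averaging over fast times in $[0,T]$ gives an error of order $(2+c_2/c_3)/T$, not $c_2e^{-c_3T}$, because the transient on $[0,1]$ counts (compare (\ref{5.26})).
\item The OU part enters the coupling at order $\sqrt{\ve T}$, amplified by a Gronwall factor depending on $T$ and $u_{\max}$, not at order $\tau_\ve$.
\end{itemize}
Both errors still vanish in your order of limits.
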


Before turning to the proof of Theorem \ref{theo5.1} we make the important observation:

\begin{remark}\label{rem5.2} \rm
The limit law $\wt{P}$ in (\ref{2.15}) does not depend on the parameter $\kappa$ in (\ref{1.20}). The topology of weak convergence on $C(\IR_+,C)$ is metrizable, see for instance \cite{StroVara79}, p.~9, and it follows that a convergence to the same $\wt{P}$ holds when the fixed parameter $\kappa \in (0,1]$, measuring the strength of the stirring and entering the definition of $\Pe$ in (\ref{1.29}) and Proposition \ref{prop1.3}, is replaced by a suitable $\kappa_\ve \in (0,1]$ tending to zero as $\ve$ goes to zero. Of course, it would be desirable to have a quantitative control on the rate of decay of $\kappa_\ve$ to $0$ ensuring the weak convergence to $\wt{P}^\ve$ for the law of $(W_t)_{t \ge 0}$ under the stationary solution of the martingale problem attached to $\wt{L} + \frac{1}{\ve} \,(B + \kappa_\ve \cD)$, as $\ve$ goes to zero. \hfill $\square$
\end{remark}

Theorem \ref{theo5.1} can be viewed as an instance of an averaging result for the fast variables, see Chapter 6 and 7 of \cite{FreiWent12}. While the proof shares a similar spirit with \cite{Kuks13}, \cite{KuksPiat08}, a special feature here is that what plays the roles of ``fast variables'' evolves in the space $\IX_{u,v}$, which moves along the ``slow variables'' $u = U_t, v = V_t, t \ge 0$, and the slow variables may cross some singular rays corresponding to $u = \mu_i v$, with $2 \le i \le n-1$. The controls developed in Section 3 are helpful on this matter. The proof first proceeds with several reduction steps that bring to the core statement (\ref{5.10}) to be established.

\bigskip\n
{\it Proof of} Theorem \ref{theo5.1}: With the tightness shown in Theorem \ref{theo4.1}, we can thus consider an arbitrary sequence $\ve_n \r 0$, such that $\wt{P}^{\ve_n}$ converges weakly to a probability $Q$ on $C(\IR_+,C)$, and the claim (\ref{5.1}) will follow if we can show that $Q = \wt{P}$.

\medskip
With this in mind, we first collect so some a priori properties of such a $Q$. First, the laws $\wt{P}^{\ve_n}, n \ge 0$, being stationary, we see that
\begin{equation}\label{5.2}
\mbox{$Q$ is a stationary law on $C(\IR_+,C)$}.
\end{equation}
Then, one notes that the law of $W_0 = (U_0,V_0)$ under $Q$ is the weak limit of the laws of $(U_0,V_0)$ on $C$ under $\Pen$. But, by Proposition \ref{prop3.3}, these laws are tight on $\stackrel{_\circ}{C}$, so that
\begin{equation}\label{5.3}
\mbox{under $Q$ the law of $W_0$ (or of any $W_t, t \ge 0$, by (\ref{5.2})) gives full mass to $\stackrel{_\circ}{C}$}.
\end{equation}
As we now explain, Theorem \ref{theo5.1} will be established if we can show that
\begin{equation}\label{5.4}
\begin{array}{l}
\mbox{under $Q$, for any smooth compactly supported function $\psi$ on $\stackrel{_\circ}{C}$},
\\
\mbox{$\psi(W_t) - \psi(W_0) - \dis\int^t_0 \wt{A} \psi (W_s) \,ds, s \ge 0$, is an $(\cF_t)_{t \ge 0}$-martingale}.
\end{array}
\end{equation}
Indeed, using the Lyapunov-Foster function in (3.21) of \cite{SzniWidm26a} and Theorem 2.1 on p.~524 of \cite{MeynTwee93}, it will follow from (\ref{5.3}) that $Q$ gives full mass to $C(\IR_+,\stackrel{_\circ}{C})$. In addition, $Q$ will satisfy (\ref{2.16}) and hence coincide with $\wt{P}$.

\medskip
Thus, there remains to establish (\ref{5.4}). We will now reduce this task to the proof of (\ref{5.10}) below.

\medskip
First, note that (\ref{5.4}) will follow if we show that for any $0 \le s_1 \le \dots \le s_m \le t < t^\prime$, any continuous compactly supported functions $\varphi_1,\dots,\varphi_m$ on $\IR^2$ bounded by $1$ in absolute value, and any smooth compactly supported function $\psi$ on $\stackrel{_\circ}{C}$, one has (with $E^Q$ denoting the $Q$-expectation):
\begin{equation}\label{5.5}
E^Q \Big[\big(\psi(W_{t^\prime}) - \psi(W_t) - \dis\int^{t^\prime}_ t \! \wt{A} \psi (W_s) \,ds\big) \prod\limits_{k=1}^m \varphi_k(W_{s_k})\Big] = 0.
\end{equation}
Since $\wt{P}^{\ve_n}$ converges weakly to $Q$ and $\wt{P}^{\ve_n}$ is the law on $C(\IR_+,C)$ of $(W_t)_{t \ge 0}$ under $\Pen$ (on $C(\IR_+, \IR^N)$), the equality (\ref{5.5}) will follow if we show that (with $\Een$ the $\Pen$-expectation):
\begin{equation}\label{5.6}
\lim\limits_n \Een \Big[\big(\psi(W_{t^\prime}) - \psi(W_t) - \dis\int^{t^\prime}_t \! \wt{A} \psi(W_s) \, ds\big) \, F\Big] = 0, \; \mbox{with} \; F = \prod\limits_{k=1}^m \varphi_k (W_{s_k}).
\end{equation}
By construction, see (\ref{1.25}), (\ref{1.29}), the expectation where we replace in (\ref{5.6}) $\wt{A} \psi(W_s)$ by $\Le g(X_s)$, with $g(x) = \psi (|x|^2, |x|^2_{-1})$, is equal to zero. Noting that $\Le g = \wt{L} g$ by (\ref{1.22}), the claim (\ref{5.5}) will follow once we show that
\begin{equation}\label{5.7}
\lim\limits_n \Een \Big[\dis\int^{t^\prime}_t \wt{L} g (X_s) - \wt{A} \psi(W_s) \, ds \,F\Big] = 0.
\end{equation}
Taking into account the expression for $\wt{L} g(x)$ on the second and third line of (\ref{1.22}), and the definition of $\wt{A} \psi$ in (\ref{2.10}), the claim (\ref{5.5}) will thus follow if for any smooth function $h$ compactly supported in $\stackrel{_\circ}{C}$, $0 \le t < t^\prime$, $F$ as in (\ref{5.6}), and $1 \le \ell \le N$, one has
\begin{equation}\label{5.8}
\lim\limits_n \Een \Big[\dis\int^{t^\prime}_t  \big(X^2_{s,\ell} - q_\ell (W_s)\big) \, h(W_s)\,ds \,F\Big] = 0.
\end{equation}
Now, if $\tau'_0 = t < \tau'_1 < \dots < \tau'_J = t^\prime$ with $J \ge 1$ are intermediate points with a constant increment $\tau'_{j+1} - \tau'_j = (t^\prime - t) / J$, for $0 \le j < J$, we note that using stationarity, and Cauchy-Schwarz Inequality, for $0 < \ve < 1$ one has
\begin{equation}\label{5.9}
\begin{array}{l}
\Ee \Big[\Sigma_{0 \le j < J} \dis\int^{\tau'_{j+1}}_{\tau'_j} |X^2_{s,\ell} - q_\ell (W_s)| \; |h(W_s) - h(W_{\tau'_j})| \,ds\Big] \le
\\
J \dis\int_0^{(t^\prime - t)/J} ds\,\Ee [ | X^2_{s,\ell} - q_\ell (W_s)|^2]^{1/2} \; \Ee [ |h(W_s) | - h(W_0)|^2]^{1/2} \stackrel{(\ref{3.6}),(\ref{A.21})}{\le}
\\[2ex]
c\,(t^\prime - t) \sup\limits_{0 \le s \le (t^\prime - t) / J} \Ee [ | h(W_s) - h(W_0)|^2]^{1/2},
\end{array}
\end{equation}
and the expression with the last line tends uniformly in $0 < \ve \le 1$ to zero as $J$ goes to infinity by the tightness of the laws $\wt{P}^\ve$ of $W_\point$, see Theorem \ref{theo4.1}. As a result, (\ref{5.8}) will follow once we can show that for any fixed $J \ge 1$, a similar vanishing limit as in (\ref{5.8}) holds for the $\Pen$-expectation of $\Sigma_{0 \le j < J} \int^{\tau'_{j+1}}_{\tau'_j} (X^2_{s,\ell} - q_\ell (W_s)) \, ds \, h(W_{\tau'_j})\,F$.

\medskip
Therefore, we now see that to prove (\ref{5.4}) it suffices to show that
\begin{equation}\label{5.10}
\left\{ \begin{array}{l}
\mbox{for any $0 \le s_1 \le \dots \le s_m \le t < t^\prime$, continuous compactly supported functions}
\\
\mbox{$\varphi_1, \dots, \varphi_m$ on $\IR^2$ bounded in absolute value by $1$, and $1 \le \ell \le N$, one has}
\\
\mbox{$\lim\limits_n \Een \big[\int^{t^\prime}_t X^2_{s,\ell} - q_\ell (W_s) \, ds \, F\big] = 0$, where}
\\
F = \varphi_1(W_{s_1}) \dots \varphi_m(W_{s_m}).
\end{array}\right. 
\end{equation}
This is the promised reduction for the proof of (\ref{5.4}).

\medskip
We will now prove (\ref{5.10}). For this purpose it is convenient to use the stochastic differential equation formulation for the diffusion process $X_s, s \ge 0$, see (\ref{1.31}). So, $X_s, s \ge 0$, under $\Pe$, with $\ve > 0$, can be represented on a possibly extended filtered probability space as the solution of the Stratonovich differential equation
\begin{equation}\label{5.11}
\begin{split}
d X(t) = &  \; \Big(\!\!- \Lambda X(t) + \mbox{\f $\dis\frac{1}{\ve}$} \; b (X(t))\Big) \, dt + \Sigma^N_1 \Big(\lambda_\ell \,a (1 + \delta_\ell)\Big)^{\frac{1}{2}} e_\ell \, d \beta_\ell(t) 
\\[-1ex]
& \;\; + \Big(\mbox{\f $\dis\frac{\kappa}{\ve}$}\Big)^{\frac{1}{2}} \Sigma^M_1 \, Z_m (X(t)) \circ d \wt{\beta}_m(t), \mbox{with the initial condition $X(0)$},
\end{split}
\end{equation}
where $e_\ell, 1 \le \ell \le N$, stands for the canonical basis of $\IR^N$, $\Lambda$ for the $N \times N$ diagonal matrix with coefficients $\lambda_\ell, 1 \le \ell \le N$, and $\beta_\ell, 1 \le \ell \le N$, $\wt{\beta}_m, 1 \le m \le M$, are independent Brownian motions for the filtration still denoted by $(\cG_t)_{t \ge 0}$, and we write $X(t)$ in place of $X_t$.

\medskip
We now wish to prove (\ref{5.10}). We thus choose $0 \le s, \le \dots \le s_m \le t < t^\prime$, $\varphi_1, \dots, \varphi_m$ continuous compactly supported functions on $\IR^2$ bounded in absolute value $1$, and $\ell$ in $\{1, \dots, N\}$ and will prove, with $F$ as in (\ref{5.10}), that
\begin{equation}\label{5.12}
H \stackrel{\rm def}{=} \Ee \,\Big[ \dis\int^{t^\prime}_t X^2_\ell(s) - q_\ell(W_s) \,ds \, F\Big] \rightarrow 0, \; \mbox{as $\ve \r 0$}.
\end{equation}
We tacitly assume that $0 < \ve \le 1$ from now on. We choose $u_{\min}, u_{\max}$, and $\eta$ with
\begin{equation}\label{5.13}
0 < u_{\min} < u_{\max}, \; u_{\max} > 1, \;\eta > 0
\end{equation}
and recall the definition of the {\it good} set $G(u_{\min}, u_{\max}, \eta)$ in (\ref{3.23}), and its complement, the {\it untamed} subset $U(u_{\min}, u_{\max}, \eta)$ in (\ref{3.16}); both are subsets of $\IR^N$. We then introduce dependent on $\ve$
\begin{equation}\label{5.14}
\mbox{$K(\ve) > 0$ integer such that for some $\nu \in \Big(0, \fr\Big)$, $K(\ve) \sim \ve^{-1} \Big(\log \mbox{\f $\dis\frac{1}{\ve}$}\Big)^{-\nu}$, as $\ve \r 0$}
\end{equation}
(the constraint on $\nu$ will be important in (\ref{5.45})), as well as the intermediate times (not to be confused with the intermediate times below (\ref{5.8}), and with $K$ shorthand notation for $K(\ve)$):
\begin{equation}\label{5.15}
\mbox{$t_0 = t < t_1 < \dots < t_K = t^\prime$ with $t_{k+1} - t_k = (t^\prime - t) / K \stackrel{\rm def}{=} \tau$, for $0 \le k < K$}, 
\end{equation}
so that
\begin{equation}\label{5.16}
\mbox{$\ve \le \tau \r 0$ and $\ve / \tau \r 0$, as $\ve \r 0$}.
\end{equation}
We now come back to the quantity $H$ of interest in (\ref{5.12}) and write:
\begin{equation}\label{5.17}
\begin{split}
H = H_1 + H_2, \; \mbox{where} &\; H_1 = \Ee \,\Big[\Sigma_{0 \le k < K} \dis\int^{t_{k+1}}_{t_k} X^2_\ell (s) - q_\ell (W_{t_k}) \,ds \, F\Big] \; \mbox{and}
\\
&\;H_2 = \Ee \,\Big[\Sigma_{0 \le k < K} \dis\int^{t_{k+1}}_{t_k} q_\ell (W_{t_k}) - q_\ell(W_s) \,ds \, F\Big].
\end{split}
\end{equation}
As we now explain
\begin{equation}\label{5.18}
\lim\limits_{\ve \r 0} \;H_2 = 0.
\end{equation}
Indeed, for any $A > 0$, we have $q_\ell = q_\ell - q_\ell \wedge A + q_\ell \wedge A$, so that (recall $|F| \le 1$)
\begin{equation}\label{5.19}
\begin{split}
|H_2| \le & \; \Ee\, \Big[ \Sigma_{0 \le k < K} q_\ell (W_{t_k}) \,1\{q_\ell(W_{t_k}) \ge A\}  (t_{k+1} - t_k) + \dis\int^{t^\prime}_t \!\!\! q_\ell (W_s) \, 1\{q_\ell (W_s) \ge A\}\Big] \, ds
\\
& \qquad \! + (t^\prime -t ) \, \Ee [\sup\limits_{0 \le s, s^\prime \le \tau} | \,q_\ell \wedge A(W_s) - q_\ell \wedge A (W_{s^\prime})|]
\\
= & \; 2 (t^\prime - t) \, \Ee \,[q_\ell (W_0) \, 1 \{q_\ell(W_0) \ge A\}] 
\\
& \qquad \! + (t^\prime - t) \, \Ee [\sup\limits_{0 \le s, s^\prime \le \tau}  | \,q_\ell \wedge A(W_s) - q_\ell \wedge A (W_{s^\prime})|],
\end{split}
\end{equation}
using stationarity in the second and third line. By the tightness of the laws of $W_s, s \ge 0$, under $\Pe$ shown in Theorem \ref{4.1} and by (\ref{5.16}), the last term of (\ref{5.19}) goes to $0$ as $\ve \r 0$. Further, by the exponential moment bound on $U_0$ in (\ref{3.6}), and the bound $q_\ell (u,v) \le u$, see (\ref{2.4}), the quantity on the third line of (\ref{5.19}) goes to $0$ as $A$ tends to infinity. This proves (\ref{5.18}).

\medskip
We now consider $H_1$ in (\ref{5.17}). For each $0 \le k < K$ we introduce the stochastic process $z_k(s)$, $t_k \le s \le t_{k + 1}$, solution of (see (\ref{1.10}), (\ref{1.16}) for notation)
\begin{equation}\label{5.20}
\begin{split}
d z_k(s) & = \mbox{\f $\dis\frac{1}{\ve}$} \; b\big(z_k(s)\big)\, ds + \Big(\mbox{\f $\dis\frac{\kappa}{\ve}$}\Big)^{1/2} \; \Sigma^M_1\, Z_m \big(z_k(s)\big) \circ d \wt{\beta}_m(s),
\\
z_k(t_k) & = X(t_k) .
\end{split}
\end{equation}
This corresponds to a similar stochastic differential equation (on the time interval $[t_k, t_{k+1}]$) as in (\ref{5.11}), where only the ``fast terms'', where $\ve$ appears, are kept, and with an initial condition given by $X(t_k)$. Since $b$ and $Z_m, 1 \le m \le M$, satisfy (\ref{1.11}), (\ref{1.17}), and (\ref{5.20}) is a Stratonovich differential equation, $z_k(\cdot)$ preserves the $|\cdot |$ and $| \cdot |_{-1}$ norm (and is non-explosive):
\begin{equation}\label{5.21}
\mbox{$|z_k(s)|^2 = U_{t_k}$ and $|z_k(s)|^2_{-1} = V_{t_k}$, for all $s \in [t_k, t_{k+1}]$}.
\end{equation}
We then write (with $z_{k,\ell}(\cdot)$ denoting the $\ell$-th component of $z_k (\cdot)$):
\begin{equation}\label{5.22}
\begin{split}
H_1 = I_1 +I_2, \; \mbox{where} &\; I_1 = \Ee \,\Big[\Sigma_{0 \le k < K} \dis\int^{t_{k+1}}_{t_k} X^2_\ell (s) - z^2_{k,\ell} (s) \,ds \, F\Big] \; \mbox{and}
\\
&\;I_2 = \Ee \,\Big[\Sigma_{0 \le k < K} \dis\int^{t_{k+1}}_{t_k} z^2_{k,\ell} (s) - q_\ell (W_{t_k}) \,ds \, F\Big] .
\end{split}
\end{equation}
We first handle $I_2$. For each $k$, we consider whether $X(t_k)$ is ``good'', that is $X(t_k) \in G(u_{\min},u_{\max},\eta)$, see (\ref{3.23}), otherwise we say that it is ``bad''. We then write
\begin{equation}\label{5.23}
\begin{split}
I_2 =I_{2,1} + I_{2,1}, \; \mbox{where} &\; I_{2,1} = \Ee \Big[\Sigma_{0 \le k < K} \dis\int^{t_{k+1}}_{t_k} \!\! z^2_{k,\ell} (s) - q_\ell (W_{t_k}) \,ds \, F, X(t_k) \; \mbox{is good}\Big], 
\\
&\;I_{2,2} = \Ee \Big[\Sigma_{0 \le k < K} \dis\int^{t_{k+1}}_{t_k} \!\!  z^2_{k,\ell} (s) - q_\ell (W_{t_k}) \,ds \, F, X(t_k) \; \mbox{is bad}\Big].
\end{split}
\end{equation}
We first bound $I_{2,2}$. By (\ref{5.20}), (\ref{5.21}) and stationarity we see that
\begin{equation}\label{5.24}
\begin{split}
|I_{2,2}| & \le (t^\prime -t ) \,\Ee \,\big[\big(U_0 + q_\ell (W_0)\big), \; X(0) \notin G(u_{\min},u_{\max}, \eta)\big]
\\
& \le c\,(t^\prime - t) \, \gamma	({u_{\min},u_{\max}, \eta})^{1/2},
\end{split}
\end{equation}
where we used the Cauchy-Schwarz Inequality, (\ref{3.6}), $q_\ell(w) \le u$, for $w = (u,v)$ by (\ref{2.4}), as well as (\ref{3.23}), (\ref{3.17}).

\medskip
We now bound $I_{2,1}$, see (\ref{5.23}). We use the notation $\mu_w$ for $\mu_{u,v}$, with $(u,v) \in C$, see (\ref{A.7}), which is the conditional law of $\mu$ given $|x|^2 = u$ and $|x|^2_{-1} = v$ (see (\ref{A.12})), and we recall the notation $p_t(\cdot,\cdot)$ for the transition density with respect to $\mu_w$ of the semi-group generated by $B + \kappa \cD$, when $w = (u,v) \in \, \stackrel{_\circ}{C}$ and $u/v$ does not belong to $\{\lambda_\ell, 1 \le \ell \le N\}$, see (\ref{B.11}). Note that ``$X(t_k)$ good'', see above (\ref{5.23}), means that $X(t_k)$ belongs to $G(u_{\min},u_{\max}, \eta)$, and hence $W_{t_k}$ fulfills these conditions (and even the assumptions of Proposition \ref{propB.2}). We thus have
\begin{equation}\label{5.25}
\begin{split}
I_{2,1} & = \Sigma_{0 \le k < K} \dis\int^{t_{k+1}}_{t_k} ds \, \Ee \, \big[\big(z^2_{k,\ell}(s) - q_\ell(W_{t_k})\big) \, F, X(t_k) \; \mbox{is good}\big]
\\
& = \Sigma_{0 \le k < K} \dis\int^\tau_0 ds \, \Ee \, \Big[ \dis\int \big(p_{s/\ve}\big(X(t_k),z\big) - 1\big) \, z^2_\ell \, d\mu_{W_{t_k}}(z) \; F, X(t_k) \; \mbox{is good$\Big]$},
\end{split}
\end{equation}
where we have used (\ref{5.15}), conditioning on $\cG_{t_k}$, and the identity (\ref{A.21}) to express $q_\ell(W_{t_k})$ as a $\mu_{W_{t_k}}$-integral.

\medskip
On the event where $X(t_k)$ is good we have $z^2_\ell \le |z|^2 \le u_{\max}$, $d\mu_{W_k}(z)$-a.s., and using (\ref{3.24}) (see also Proposition \ref{propB.2}) when $s \ge \ve$, we obtain
\begin{equation}\label{5.26}
\begin{split}
|I_{2,1}| & \le K \,u_{\max} \Big(2 \ve + \dis\int^\tau_\ve c_2 \, \exp\Big\{ - c_3 \;\mbox{\f $\dis\frac{s}{\ve}$}\Big\} \; ds\Big)
\\
& \le \ve \, K \,u_{\max} \,  \Big(2 + \mbox{\f $\dis\frac{c_2}{c_3}$}\Big)
\end{split}
\end{equation}
where $c_2 (u_{\min},u_{\max},\eta)$ and $c_3 (u_{\min},u_{\max},\eta)$ are the constants from Proposition \ref{propB.2}. By (\ref{5.14}) the product $\ve \,K$ tends to $0$ as $\ve$ goes to $0$, so that
\begin{equation}\label{5.27}
\lim\limits_{\ve \r 0} \; |I_{2,1}| = 0
\end{equation}
and, combined with (\ref{5.24}), (\ref{5.23}), we find 
\begin{equation}\label{5.28}
\limsup\limits_{\ve \r 0} \; |I_2| \le   c(t^\prime - t) \,\gamma(u_{\min},u_{\max},\eta)^{1/2}.
\end{equation}
We now turn to $I_1$ in (\ref{5.22}) and introduce for $0 \le k  < K$ the $(\cG_t)$-stopping times (see (\ref{5.15}), (\ref{5.13}) for notation):
\begin{equation}\label{5.29}
\tau_k = \inf\{ s \ge t_k; U_s \ge u_{\max}\}
\end{equation}
and we write
\begin{equation}\label{5.30}
\begin{split}
I_1 =I_{1,1} + I_{1,2}, \; \mbox{where} &\; I_{1,1} = \Ee\, \Big[\Sigma_{0 \le k < K} \dis\int^{t_{k+1}}_{t_k} \!\! X^2_\ell (s \wedge \tau_k) -z^2_{k,\ell} (s \wedge \tau_k) \,ds \, F\Big], \;\
\\
&\qquad \;\; \; \mbox{and}
\\
&\;I_{1,2} = \Ee\, \Big[\Sigma_{0 \le k < K} \dis\int^{t_{k+1}}_{t_k} \!\! X^2_\ell (s) - X^2_\ell (s \wedge \tau_k) - z^2_{k,\ell}(s) + z^2_{k,\ell}(s \wedge \tau_k)\, ds \, F\Big].\end{split}
\end{equation}
We first bound $I_{1,2}$.

\medskip
We note that the integral inside the expectation vanishes if $\tau_k \ge t_{k+1}$, and is bounded by $\int^{t_{k+1}}_{t_k}U_s \, ds + 3(U_{t_k} + u_{\max})(t_{k+1} - t_k)$ otherwise. We thus find that (recall that $|F| \le 1$)
\begin{equation}\label{5.31}
|I_{1,2}| \le \Sigma_{0 \le k  < K} \dis\int^{t_{k+1}}_{t_k} \Ee\, [U_s + 3 U_{t_k} + 3 u_{\max}, t_k \le \tau_k < t_{k+1}]\, ds.
\end{equation}
Applying the Cauchy-Schwarz Inequality and (\ref{3.6}), we thus find that
\begin{equation}\label{5.32}
|I_{1,2}| \le c\,(1 + u_{\max}) (t^\prime - t) \; \Pe [\tau_0 < t_1]^{1/2}.
\end{equation}
Next, we note that when $t_1 = \tau \le 1$, i.e.~when $\ve$ is small enough, see (\ref{5.15}), as we will from now on assume, one has
\begin{equation}\label{5.33}
\begin{split}
\Pe\,[\tau_0 < t_1] & \le \Pe\,\Big[U_0 \ge \mbox{\f $\dis\frac{u_{\max}}{2}$} \Big] + \Pe \, \Big[\sup\limits_{0 \le s \le \tau} |U_s - U_0| \ge \mbox{\f $\dis\frac{u_{\max}}{2}$} 
\Big]
\\
&\!\!\!\! \stackrel{(\ref{4.10})}{\le} \Pe \,\Big[U_0 \ge \mbox{\f $\dis\frac{u_{\max}}{2}$} \Big] + c \, \tau^2 / u^4_{\max} .
\end{split}
\end{equation}
Thus, coming back to (\ref{5.32}), we find that
\begin{equation}\label{5.34}
\overline{\lim\limits_{\ve \r 0}} \;\; |I_{1,2}| \le c\,(t^\prime - t) (1 + u_{\max}) \; \sup\limits_{0 < \ve \le 1} \Pe\, \Big[U_0 \ge \mbox{\f $\dis\frac{u_{\max}}{2}$} \Big]^{1/2}.
\end{equation}
We now turn to $I_{1,1}$, see (\ref{5.30}). We will use the martingale property satisfied by $(X(s), z_k(s))$, $s \in [t_k,t_{k+1}]$. We introduce the functions, see (\ref{1.10}), (\ref{1.19})
\begin{equation}\label{5.35}
\mbox{$f_\ell(x) = x^2_\ell$ and $h_\ell(x) = (B + \kappa \cD) \, f_\ell (x)$ for $x \in \IR^N$}.
\end{equation}
Then we find, using the martingale property and $X(t_k) = z_k(t_k)$, that
\begin{equation}\label{5.36}
\begin{split}
I_{1,1} & = \Sigma_{0 \le k < K} \dis\int^{t_{k+1}}_{t_k} \Ee\, \big[\big(X^2_\ell (s \wedge \tau_k) - z^2_{k,\ell} (s \wedge \tau_k)\big)\, F\big] \, ds
\\
& =  \Sigma_{0 \le k < K} \dis\int^{t_{k+1}}_{t_k} \Ee\, \Big[ \dis\int^{s \wedge \tau_k}_{t_k} \Big\{\wt{L} + \mbox{\f $\dis\frac{1}{\ve}$} \;(B + \kappa \cD) \, f_\ell \big(X(\sigma)\big)
\\
& \hspace{3.5cm}- \mbox{\f $\dis\frac{1}{\ve}$} \, (B + \kappa \cD) \, f_\ell  \big(z_k(\sigma)\big)  \Big\} \, d \sigma \, F\Big] \,ds
\\
& = J_1 + J_2,
\end{split}
\end{equation}
where
\begin{equation}\label{5.37}
\begin{split}
J_1& = \Sigma_{0 \le k < K} \dis\int^{t_{k+1}}_{t_k} \Ee\,\Big[\dis\int^{s \wedge \tau_k}_{t_k} \wt{L} f_\ell\big(X(\sigma)\big) \, d \sigma \, F\Big] \; \mbox{and}
\\
J_2& = \Sigma_{0 \le k < K} \dis\int^{t_{k+1}}_{t_k} \Ee\,\Big[\dis\int^{s \wedge \tau_k}_{t_k} \,h_\ell\big(X(\sigma)\big) - h_\ell \big(z_k(\sigma)\big) \, d \sigma \, F\Big]\, ds.
\end{split}
\end{equation}
We fist bound $J_1$. We note that by (\ref{1.9}) and (\ref{5.35})
\begin{equation}\label{5.38}
| \wt{L} \, f_\ell (x) | \le  c\, (x_\ell^2 + 1),
\end{equation}
and hence that
\begin{equation}\label{5.39}
|J_1| \le \Sigma_{0 \le k < K} \dis\int^{t_{k+1}}_{t_k} c\,(s - t_k) \, \Ee [ X^2_\ell(0) + 1] \, ds \stackrel{(\ref{5.15})}{\le} c\,(t^\prime - t) \, \tau( \Ee [U_0] + 1).
\end{equation}
With the identity (\ref{3.4}) and (\ref{5.16}) we thus see that 
\begin{equation}\label{5.40}
\lim\limits_{\ve \r 0} \; J_1 = 0.
\end{equation}
We proceed with $J_2$. We will apply the Gronwall Inequality to bound the difference $|X(\sigma) - z_k(\sigma)|$. We first note that inside the ball of $\IR^N$ where $|x|^2 \le u_{\max}$, the $\IR^N$-valued functions $b(\cdot), Z_m(\cdot), \Sigma_\ell \, Z_{m,\ell}(\cdot) \, \partial_\ell \, Z_{m,\ell}(\cdot)$, see (\ref{1.10}), (\ref{1.16}), satisfy a Lipschitz property with a constant depending on $u_{\max}$ (we refer to the beginning of Section 3 for our convention concerning constants). Further, as noted in (\ref{5.21}), $\IP_\ve$-a.s., $|z_k(s)| = |z_k(t_k)|$, for all $s$ in $[t_k,t_{k+1}]$.

\medskip
Thus, applying Doob's Inequality, see \cite{KaraShre88}, p.~14, and noting that $|X|(\wt{\sigma})|^2$ and $|z_k(\wt{\sigma})|^2$ are at most $u_{\max}$ when $\tau_k > t_k$ and $t_k \le \wt{\sigma} \le \tau_k$, and that $\{\tau_k = t_k\} = \cG_{t_k}$, we find by (\ref{5.11}) and (\ref{5.20}) that for $0 \le k < K$ and $t_k \le s \le t_{k+1}$:
\begin{equation}\label{5.41}
\begin{array}{l}
\Ee [\sup\limits_{t_k \le \sigma \le s} \, | \, X(\sigma \wedge \tau_k) - z_k (\tau \wedge \tau_k)|^2]  \le C\, \Ee \Big[\Big(\dis\int^s_{t_k} | X(\wt{\sigma} \wedge \tau_k) | \, d\wt{\sigma}\Big)^2 1_{\{\tau_k > t_k\}}
\\[1ex]
+\; \Sigma_\ell \sup\limits_{\sigma \in [t_k,s]} |\beta_\ell (\sigma) - \beta_\ell(t_k)|^2\Big] +  \Big[ \mbox{\f $\dis\frac{c(u_{\max})}{\ve}$} \Big\{ \dis\int^s_{t_k} \Ee [ | X(\wt{\sigma} \wedge \tau_k) - z_k(\wt{\sigma} \wedge \tau_k)| ] \, d \wt{\sigma}\Big\}^2 
\\[1ex]
+ \;\mbox{\f $\dis\frac{c(u_{\max})}{\ve}$} \dis\int^s_{t_k} \Ee [ | X(\wt{\sigma} \wedge \tau_k) - z_k(\wt{\sigma} \wedge \tau_k)|^2 ]\,  d \wt{\sigma}\Big]  \le c(u_{\max}) (s - t_k) (s- t_k + 1) 
\\[1ex]
+ \; c(u_{\max}) \Big(\mbox{\f $\dis\frac{s-t_k}{\ve^2}$}  + \mbox{\f $\dis\frac{1}{\ve}$}\Big) \dis\int^s_{t_k} \Ee[| X(\sigma \wedge \tau_k) - z_k (\sigma \wedge \tau_k) |^2] \, d\tau .
\end{array}
\end{equation}
Applying the Gronwall Inequality, we find that for $s \in [t_k, t_{k+1}]$
\begin{equation}\label{5.42}
\begin{array}{l}
\Ee [\sup\limits_{t_k \le \sigma \le s} \, | \, X(\sigma \wedge \tau_k) - z_k (\sigma \wedge \tau_k)|^2]  \le 
\\
c(u_{\max}) (s -t_k)(s - t_k + 1) \, \exp\Big\{ \mbox{\f $\dis\frac{c(u_{\max})}{\ve}$} \;\Big(\mbox{\f $\dis\frac{s - t_k}{\ve}$} + 1\Big) (s - t_k)\Big\}.
\end{array}
\end{equation}
In particular, choosing $s = t_{k+1}$, with (\ref{5.15}), we obtain (recall $\tau$ from (\ref{5.15})):
\begin{equation}\label{5.43}
\begin{array}{l}
\Ee [\sup\limits_{t_k \le \sigma \le t_{k+1}} \, | \, X(\sigma \wedge \tau_k) - z_k (\sigma \wedge \tau_k)|^2]  \le 
\\
c(u_{\max})\,\tau(\tau + 1) \, \exp\Big\{ c(u_{\max}) \;\Big(\mbox{\f $\dis\frac{\tau}{\ve}$} + 1\Big) \; \mbox{\f $\dis\frac{\tau}{\ve}$}\Big\}.
\end{array}
\end{equation}
We can now bound $J_2$, see (\ref{5.37}). We recall the notation (\ref{5.35}). We have
\begin{equation}\label{5.44}
\begin{split}
|J_2| & \le \Sigma_{0 \le k < K} \dis\int^{t_{k+1}}_{t_k} \; \mbox{\f $\dis\frac{1}{\ve}$} \; \Ee\, \Big[ \dis\int^{s \wedge \tau_k}_{t_k} \big|h_\ell\big(X(\sigma)\big) - h_\ell \big(z_k(\sigma)\big)\big| \, d \sigma\Big] \, ds
\\
&\hspace{-2.5ex} \stackrel{(\ref{5.43}),\tau \le 1}{\le} \mbox{\f $\dis\frac{c(u_{\max})}{\ve}$} \; \Sigma_{0 \le k < K} \dis\int^{t_{k+1}}_{t_k} \dis\int^{s}_{t_k} d \sigma \, ds \; \tau^{1/2} \exp \Big\{ c(u_{\max}) \Big(  \mbox{\f $\dis\frac{\tau}{\ve}$} + 1\Big) \;  \mbox{\f $\dis\frac{\tau}{\ve}$}\Big\}
\\
& = \,c(u_{\max}) (t^\prime - t) \; \mbox{\f $\dis\frac{\tau^{3/2}}{\ve}$} \; \exp \Big\{ c(u_{\max}) \Big(\mbox{\f $\dis\frac{\tau}{\ve}$} + 1\Big) \; \mbox{\f $\dis\frac{\tau}{\ve}$}\Big\}.
\end{split}
\end{equation}
By (\ref{5.14}), (\ref{5.15}), as $\ve$ tends to $0$, we have $\frac{\tau}{\ve} \sim (t^\prime - t)(\log \frac{1}{\ve})^\nu$ (where $\nu \in (0, \frac{1}{2})$), and coming back to the last line of (\ref{5.44}) we find that
\begin{equation}\label{5.45}
\lim\limits_{\ve \r 0} \; J_2 = 0.
\end{equation}
We can now collect the bounds (\ref{5.40}), (\ref{5.45}) to infer that
\begin{equation}\label{5.46}
\overline{\lim\limits_{\ve \r 0}} \;I_{1,1} = 0.
\end{equation}
Thus, combining (\ref{5.28}), (\ref{5.30}), (\ref{5.34}) with the above, we find that (see (\ref{5.12}))
\begin{equation}\label{5.47}
\begin{split}
\overline{\lim\limits_{\ve \r 0}} \; |H| \stackrel{(\ref{5.18})}{=} \overline{\lim\limits_{\ve \r 0}} \; |H_1| & \stackrel{(\ref{5.22})}{\le} \overline{\lim\limits_{\ve \r 0}} \; |I_1| + \overline{\lim\limits_{\ve \r 0}}  \; |I_2|
\\
& \;\;\;  \le c\, (t - t^\prime) \Big\{(1 + u_{\max}) \; \sup\limits_{0 < \ve \le 1} \; \Pe\Big[U_0 \ge \mbox{\f $\dis\frac{u_{\max}}{2}$}\Big] 
\\
& \quad \;\; + \gamma(u_{\min}, u_{\max}, \eta)^{\frac{1}{2}}\Big\}.
\end{split}
\end{equation}
We can now let $u_{\min}, u_{\max}, \eta$ respectively tend to $0$, infinity, and zero, so that by (\ref{3.18}) and (\ref{3.6}) we conclude that
\begin{equation}\label{5.48}
\lim\limits_{\ve \r 0} \; H = 0,
\end{equation}
and thus (\ref{5.10}) holds. As already mentioned, (\ref{5.4}) follows and this implies (\ref{5.1}). This concludes the proof of Theorem \ref{theo5.1}. \hfill $\square$

\medskip
A a direct consequence of Theorem \ref{theo5.1} and Proposition \ref{prop3.1} we have, with the notation of (\ref{3.2}), (\ref{3.3}), the following:
\begin{corollary}\label{cor5.3}
If $\psi$ is a continuous real valued function on $C$, which is such that for some $z_0 \in (0,1/B^\prime_0)$, $z_1 \in (0, B^\prime_1)$
\begin{equation}\label{5.49}
\mbox{$\psi (u,v) / (e^{z_0 v} + e^{z_1 u})$ is a bounded function on $C$},
\end{equation}
then with $\wt{\pi}$ as in (\ref{2.12})
\begin{equation}\label{5.50}
\lim\limits_{\ve \r 0} \; \Ee \,[\psi(W_0)] = \dis\int \psi(w) \, \wt{\pi} (dw).
\end{equation}
\end{corollary}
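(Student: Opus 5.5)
The plan is to combine the weak convergence of one-dimensional marginals with uniform integrability. Theorem \ref{theo5.1} gives that $\wt{P}^\ve$ converges weakly to $\wt{P}$ on $C(\IR_+,C)$. The evaluation map $w_\point \mapsto w_0$ is continuous, so the law of $W_0$ under $\Pe$ converges weakly on $C$ to the law of $W_0$ under $\wt{P}$. By stationarity (\ref{2.15}) the latter is $\wt{\pi}$, viewed as a probability on $C$ carried by $\stackrel{_\circ}{C}$. Hence $\Ee[\varphi(W_0)] \r \int \varphi\, d\wt{\pi}$ for every bounded continuous $\varphi$ on $C$.

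To pass from bounded to unbounded $\psi$, I will truncate. For $A>0$, write $\psi = \psi_A + (\psi - \psi_A)$ with $\psi_A = (\psi \wedge A)\vee(-A)$, which is bounded and continuous. The bounded part converges by the previous step, so it suffices to show
\[
\lim_{A\r\infty}\sup_{\ve>0}\Ee\big[|\psi(W_0)|\,1\{|\psi(W_0)|>A\}\big]=0,
\qquad
\int |\psi|\,1\{|\psi|>A\}\,d\wt{\pi} \r 0 .
\]
By (\ref{5.49}), $|\psi| \le c\,(e^{z_0 V_0}+e^{z_1 U_0})$. I will pick $z_0' \in (z_0, 1/B'_0)$ and $z_1' \in (z_1, 1/B'_1)$. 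I am reading the condition $z_1 \in (0,B'_1)$ in the statement as $z_1 \in (0,1/B'_1)$, which is presumably the intended meaning and is what (\ref{3.6}) requires. Then (\ref{3.5}) and (\ref{3.6}) give $\sup_\ve \Ee[e^{z_0' V_0}+e^{z_1' U_0}]<\infty$. Since $e^{z_0 v}$ is $o(e^{z_0' v})$ as $v\r\infty$, and likewise for $u$, the family $\{\psi(W_0)\}$ under $\Pe$ is uniformly integrable, which gives the first limit.

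For the limit measure, integrability of $|\psi|$ under $\wt{\pi}$ follows from weak convergence together with lower semicontinuity (Fatou/portmanteau). Apply it to the continuous non-negative functions $e^{z_0' v}\wedge A$ and $e^{z_1' u}\wedge A$, then let $A\r\infty$; this transfers the uniform bound to $\wt{\pi}$. The same domination then yields the second limit, and combining the three pieces gives (\ref{5.50}).

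The main subtlety is the ranges of the exponents. The argument needs the strict inequalities $z_0<1/B'_0$ and $z_1<1/B'_1$ so that the slightly larger exponents $z_0',z_1'$ exist and keep the bounds of Proposition \ref{prop3.1} available. Everything else is standard uniform-integrability bookkeeping.
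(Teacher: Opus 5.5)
Your proposal is correct and follows the same route as the paper. The paper combines the weak convergence of the law of $W_0$ to $\wt{\pi}$ (Theorem~\ref{theo5.1}) with the uniform exponential bounds of Proposition~\ref{prop3.1} at slightly larger exponents $z_0' \in (z_0, 1/B_0')$, $z_1' \in (z_1, 1/B_1')$, and simply calls the conclusion ``classical''; you spell that step out through truncation and uniform integrability, and your reading of the condition on $z_1$ as $z_1 \in (0,1/B_1')$ is exactly what the paper's proof uses.
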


\begin{proof}
By Theorem \ref{theo5.1} the law on $C$ of $W_0$ under $\wt{\IP}_\ve$ converges weakly to $\wt{\pi}$. In addition, by Proposition \ref{prop3.1}, for $z^\prime_0 \in (z_0, 1/B^\prime_0)$, and $z^\prime_1 \in (z_1, 1/B^\prime_1)$, $\Ee[e^{z^\prime_0 V_0} + e^{z^\prime_1 U_0}]$ remains uniformly bounded in $\ve > 0$. This classically readily implies (\ref{5.50}).
\end{proof}

\begin{remark}\label{rem5.4} \rm
The same statement as above holds if the fixed $\kappa \in (0,1]$ entering the definition of $\Pe$ in (\ref{1.29}) is replaced by a $\kappa_\ve \in (0,1]$ tending to $0$ as $\ve$ goes to $0$, for which the corresponding $\Pe$ weakly converges to $\wt{P}$ as $\ve \r 0$. As mentioned in Remark \ref{rem5.2}, Theorem \ref{theo5.1} ensures that one can construct some such examples of  $\kappa_\ve$. \hfill $\square$
\end{remark}

\section{Inviscid condensation bound}
\setcounter{equation}{0}

In this section we use the weak convergence result of the previous section and the condensation bound (\ref{2.19}), see also Theorem \ref{theo5.1} of the companion article \cite{SzniWidm26a}, to obtain a quantitative lower bound on the inviscid limit of the ratio $\Ee [V_0] / \Ee[U_0]$.

\medskip
We recall that $B_0 = a \, \Sigma_\ell (1 + \delta_\ell) < B_1 = a \, \Sigma_\ell \, \lambda_\ell (1 + \delta_\ell)$, see (\ref{3.2}), (\ref{3.3}), and that $2 \, \Ee[U_0] = B_0$ for all $\ve > 0$, see (\ref{3.4}).
\begin{theorem}\label{theo6.1} (Inviscid condensation bound)

\medskip\n
For any $\ell_0$ in $\{3, \dots, N\}$ (recall $N = 2n$), writing $\ell_0 = 2i_0$ or $\ell_0 = 2i_0 -1$, with $2 \le i_0 \le n$, depending on whether $\ell_0$ is even or odd, one has
\begin{equation}\label{6.1}
2 \lim\limits_{\ve \r 0} \; \Ee[U_0 - V_0] \le \mbox{\f $\dis\frac{B_1 - B_0}{\lambda_{\ell_0} - 1}$} + \mbox{\f $\dis\frac{\lambda_3}{\lambda_3 - 1}$} \; \sum\limits^{i_0 - 2}_{k=1} \;\mbox{\f $\dis\frac{1}{n-k}$} \; B_0 \le \mbox{\f $\dis\frac{B_1 - B_0}{\lambda_{\ell_0} - 1}$} +  \mbox{\f $\dis\frac{\lambda_3}{\lambda_3 - 1}$}  \;  \mbox{\f $\dis\frac{\ell_0}{N - \ell_0}$} \; B_0
\end{equation}
(where the limit in the left member exists by Corollary {\rm \ref{cor5.3}} and {\rm (\ref{3.6})}, and the sum in the middle member is omitted when $i_0 = 2$).
\end{theorem}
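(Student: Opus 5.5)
The plan is to pass to the limit in the left member of (\ref{6.1}) by means of Corollary \ref{cor5.3}, and then to invoke the condensation bound (\ref{2.19}) for the limit stationary measure $\wt{\pi}$.

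First, we choose $\psi(u,v) = u - v$ on $C$. This function is continuous and non-negative on $C$, and it is bounded by $u$. Hence $\psi(u,v)/(e^{z_0 v} + e^{z_1 u})$ is bounded on $C$ for any $z_1 > 0$, and in particular for some $z_1 < 1/B^\prime_1$ and any admissible $z_0 > 0$. Thus (\ref{5.49}) holds. Here the condition is understood as $z_1 \in (0,1/B^\prime_1)$, as in the proof of Corollary \ref{cor5.3}.

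Corollary \ref{cor5.3} then shows that the limit $\lim_{\ve \r 0} \Ee[U_0 - V_0]$ exists and equals $\int (u-v)\, \wt{\pi}(dw)$. By (\ref{2.15}) and the stationarity of $\wt{P}$, this integral is $\wt{E}[U_0 - V_0]$. The uniform integrability needed here is the exponential bound (\ref{3.6}), which is built into Corollary \ref{cor5.3}. This is also why the statement mentions (\ref{3.6}).

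Second, we apply (\ref{2.19}) with the given $\ell_0$ and the corresponding $i_0$. This gives $2\wt{E}[U_0 - V_0]$ bounded by the middle member of (\ref{6.1}), and then by the right member. The constants $B_0$ and $B_1$ in (\ref{2.17}) coincide with those in (\ref{3.2}), (\ref{3.3}), so the two bounds match exactly. The sum $\sum_{1 \le k < i_0-1}$ in (\ref{2.19}) is the same as $\sum_{k=1}^{i_0-2}$ in (\ref{6.1}).

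There is no real obstacle, since all the analytic work sits in Theorem \ref{theo5.1} and in the companion article. The only point to check is that weak convergence of the law of $W_0$ upgrades to convergence of the expectation of the unbounded function $u - v$. This is exactly the content of Corollary \ref{cor5.3}, and it follows from uniform control of $\Ee[e^{z U_0}]$ for some $z > 0$ via (\ref{3.6}).
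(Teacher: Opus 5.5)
Your proposal is correct and follows the paper's own proof exactly: it applies Corollary \ref{cor5.3} to $\psi(u,v) = u - v$ to identify $\lim_{\ve \r 0} \Ee[U_0 - V_0]$ with $\wt{E}[U_0 - V_0]$, and then invokes the condensation bound (\ref{2.19}). You also rightly read the condition $z_1 \in (0, B^\prime_1)$ in (\ref{5.49}) as the intended $z_1 \in (0, 1/B^\prime_1)$.
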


\begin{proof}
This is a direct application of Corollary \ref{cor5.3} and (\ref{2.19}) (see also Theorem 5.1 of the companion article \cite{SzniWidm26a}).
\end{proof}

\begin{remark}\label{rem6.2} \rm 1) Note that when $\ell_0$ can be chosen so that $B_1/B_0 \ll \lambda_{\ell_0}$ and $\ell_0 \ll N$,  the left member of (\ref{6.1}) is small compared to $B_0 = 2 \, \Ee[U_0]$. One can compare the left member of (\ref{6.1}) to the corresponding expectation when ``$\ve = \infty$'', i.e.~when $\Le$ in (\ref{1.21}) is replaced by $\wt{L}$ in (\ref{1.9}). In this case, from the observation mentioned below (\ref{1.9}) (with hopefully obvious notation), one has
\begin{equation}\label{6.2}
2 \wt{\IE}\,\!^{\ve = \infty} [U_0 - V_0] = a \, \Sigma_\ell (1 - 1/\lambda_\ell) (1 + \delta_\ell) = B_0 (1 - B_{-1} / B_0), \; \mbox{where} \; B_{-1} = a\, \Sigma_\ell \, \lambda_\ell^{-1} (1 + \delta_\ell).
\end{equation}
The ratios $\lambda_f = B_1/B_0 \ge \wt{\lambda}_f = B_0 / B_{-1}$ can be viewed as ways of measuring (somewhat in the spirit of the ``centroid'' considerations on pp.~22,23 of \cite{Naza11}) effective spectral values for the Brownian forcing acting on the stationary diffusion $(X_t)_{t \ge 0}$ under $\Pe, \ve > 0$. Thus, when $\lambda_f \ll \lambda_{\ell_0}$ with $\ell_0 \ll N$, and $\wt{\lambda}_f$ remains bounded away from $1$, (\ref{6.1}) and (\ref{6.2}) can be viewed as a ``condensation effect'' due to the inviscid limit, which induces an attrition of all but the low modes (i.e.~$\ell = 1,2$).

\medskip\n
2) In view of Remark \ref{rem5.4} the same statement as in Theorem \ref{theo6.1} holds when $\kappa_\ve \in (0,1]$ tends to $0$ as $\ve \r 0$, and is such that the corresponding $\wt{P}_\ve$ weakly converges to $\wt{P}$ as $\ve \r 0$. From Remark \ref{rem5.2} one knows that such choices of $\kappa_\ve, \ve \r 0$, are possible. \hfill $\square$
\end{remark}

\begin{appendix}
\section{Appendix: The spaces $\IX_{u,v}$}
\setcounter{equation}{0}

In this appendix we collect several facts concerning the spaces $\IX_{u,v}$, see (\ref{A.1}) below. They will be helpful when analyzing the convergence to equilibrium of the diffusion induced by drift and stirring on these spaces in Appendix B. We refer to Section 1 for notation. In this appendix, positive constants will tacitly depend on $N$ and the $\lambda_\ell, 1 \le \ell \le N$, see (\ref{1.1}), (\ref{1.2}). Additional dependence will appear in the notation.

\medskip
We first recall some notation. We denote by $C$ the two-dimensional cone in (\ref{2.1}) and by $\stackrel{_\circ}{C}$ its interior. Of special interest in this appendix are the spaces
\begin{equation}\label{A.1}
\mbox{$\IX_{u,v} = \{x \in \IR^N$; $|x|^2 = u, |x|^2_{-1} = v\}$, for $(u,v) \in C$}.
\end{equation}
We recall, see Appendix A of \cite{SzniWidm26a}, the definition of the polytope in $\IR_+^\Sigma$ (with $\Sigma = \{3, \dots,n\}$, see also Figure 1):
\begin{equation}\label{A.2}
\begin{split}
\IV_{u,v} = & \{\sigma = (s_3, \dots, s_n) \in \IR_+^\Sigma; \;\mbox{$u - \mu_1 v \ge \ell_1 (\sigma)$ and $u - \mu_2 v \le \ell_2(\sigma)\}$, where}
\\
\ell(\sigma)  = &\Sigma^n_3 \Big(1 - \mbox{\f $\dis\frac{\mu_1}{\mu_i}$}\Big) \, s_i, \; \ell_2(\sigma) = \Sigma^n_3 \, \Big(1 - \mbox{\f $\dis\frac{\mu_2}{\mu_i}$}\Big) \, s_i, \; \mbox{for $\sigma \in \IR^\Sigma$}.
\end{split}
\end{equation}
For $\sigma \in \IV_{u,v}$ we define $s_1,s_2$ via
\begin{equation}\label{A.3}
\begin{split}
\mu_2(1 - 1/\mu_2) \, s_1 & = -u + \mu_2 v + \ell_2(\sigma),
\\
(1 - 1/\mu_2) \, s_2 & =\;\; u - \mu_1 v - \ell_1(\sigma).
\end{split}
\end{equation}
They are non-negative and determined by the identities
\begin{equation}\label{A.4}
\Sigma^n_1 \, s_i = u \; \mbox{and} \; \Sigma^n_1 \; \mbox{\f $\dis\frac{1}{\mu_i}$} \; s_i = v.
\end{equation}
We write
\begin{equation}\label{A.5}
\mbox{$V(u,v)$ for the ($(n-2)$-dimensional) volume of $\IV_{u,v}$}
\end{equation}
(It is a continuous function on $C$, positive on $\stackrel{_\circ}{C}$, see (\ref{A.11}), (\ref{A.18}) of \cite{SzniWidm26a}). 

\medskip
We then define the probability kernel from $C$ to $\IR_+^\Sigma$:
\begin{equation}\label{A.6}
\begin{split}
\wh{\mu}_{u,v} (d\sigma)  &=  1_{\IV_{u,v}} d \sigma / V_{u,v}, \; \mbox{when $(u,v) \in \, \stackrel{_\circ}{C}$}
\\
&=  \mbox{the Dirac mass at $0\; (\in \IR_+^\Sigma)$, when $u = v \ge 0$},
\\
&=  \mbox{the Dirac mass at $(0,\dots,u)$, when $u = \lambda_N v \ge 0$},
\end{split}
\end{equation}
so that $\wh{\mu}_{u,v} (\IV_{u,v}) = 1$, for all $(u,v) \in C$.

\medskip
With $\wh{\mu}_{u,v}$ we can construct an important probability kernel from $C$ to $\IR^N$. Namely, for $(u,v) \in C$, we define
\begin{equation}\label{A.7}
\begin{array}{l}
\mbox{$\mu_{u,v}(dx)$ to be the image measure on $\IR^N$ of $\wh{\mu}_{u,v}(d \sigma) \otimes \prod\limits_{i = 1}^n \; \mbox{\f $\dis\frac{d \theta_i}{2 \pi}$}$ on $\IR_+^\Sigma \times [0,2 \pi)^n$}
\\
\mbox{under the map $(\sigma, \theta_1,\dots,\theta_n) \r (\sqrt{s}_1  \cos \theta_1,  \sqrt{s}_1 \sin \theta_1, \dots , \sqrt{s}_n \cos \theta_n, \sqrt{s} _n \sin \theta_n)$}
\end{array}
\end{equation}
(note that $s_1,s_2$ are $\wh{\mu}_{u,v}$-a.s.~non-negative, see below (\ref{A.3})).

\begin{center}
\psfrag{l5}{$e_5$}
\psfrag{l4}{$e_4$}
\psfrag{l3}{$e_3$}
\includegraphics[width=4cm]{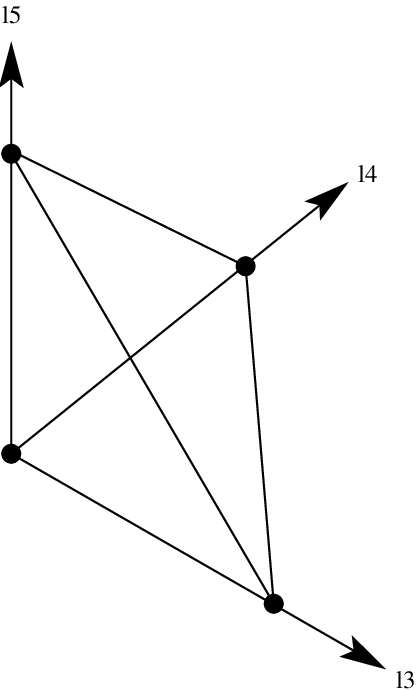} \qquad  \quad \includegraphics[width=4cm]{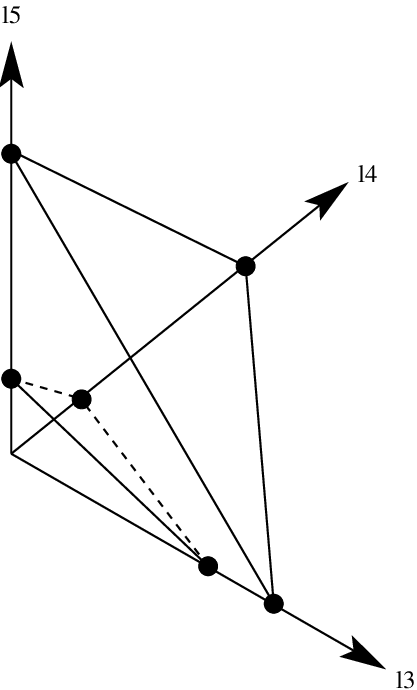} \qquad \quad  \includegraphics[width=4cm]{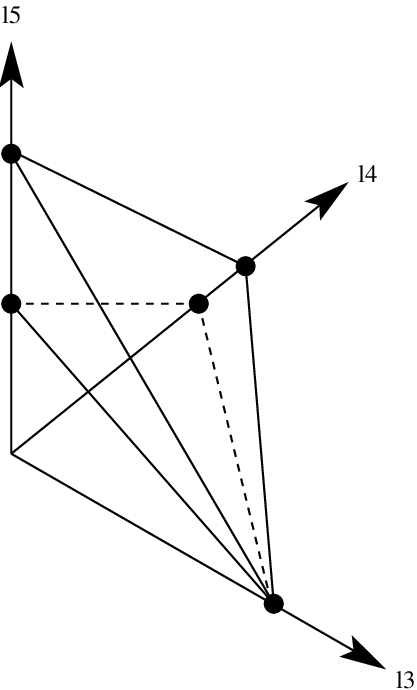}
\end{center}

\qquad $1 = \mu_1 < u/v \le \mu_2$ \qquad \qquad  \qquad $\mu_2 < u/v < \mu_3$  \qquad \qquad \qquad \quad $u/v = \mu_3$

\bigskip
\begin{center}
\psfrag{l5}{$e_5$}
\psfrag{l4}{$e_4$}
\psfrag{l3}{$e_3$}
\psfrag{s35}{$\sigma_{3,5}$}
\psfrag{s34}{$\sigma_{3,4}$}
\psfrag{s45}{$\sigma_{4,5}$}
\includegraphics[width=4cm]{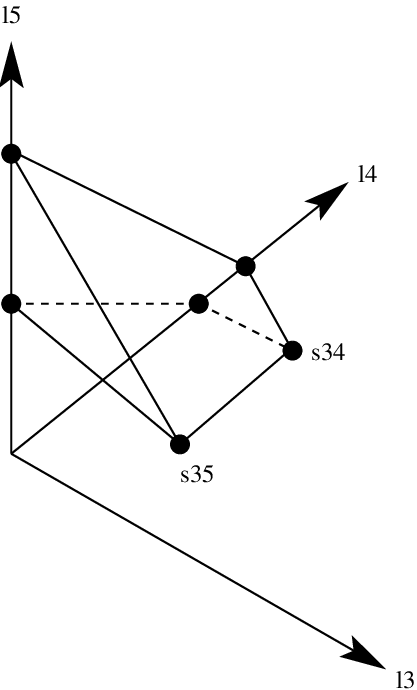} \qquad  \quad \includegraphics[width=4cm]{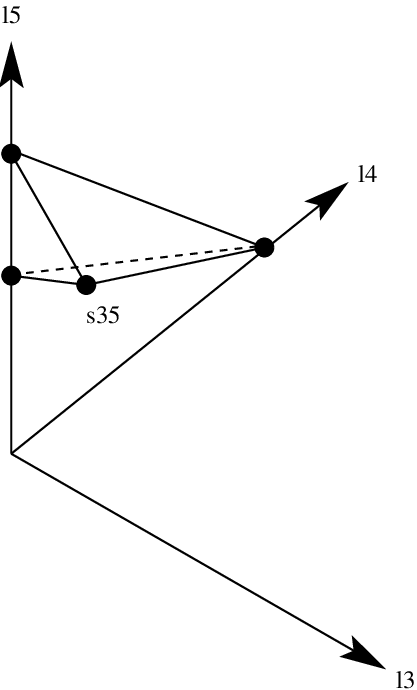} \qquad \quad  \includegraphics[width=4cm]{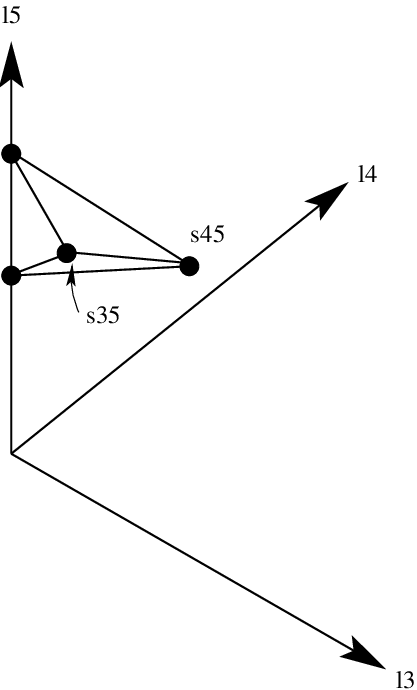}
\end{center}

\qquad $\mu_3 < u/v < \mu_4$ \qquad \qquad  \qquad \qquad  $u/v = \mu_4$  \qquad \qquad \qquad \qquad $\mu_4 < u/v < \mu_5$

\bigskip\bigskip\n
Fig.~1: A schematic illustration of the polytope $\IV_{u,v}$ when $n=5$, with varying ratio $u/v$.

\medskip
The next lemma highlights the interest of keeping the ratio $u/v$ away from the ``singular values'' $\mu_i$, and the special role of the kernel $\mu_{u,v}(dx)$.

\begin{lemma}\label{lemA.1}
If $(u,v) \in \, \stackrel{_\circ}{C}, i_0 \in \{1, \dots, n-1\}$ and $\eta > 0$ are such that
\begin{equation}\label{A.8}
\mu_{i_0} + \eta \le u/v \le \mu_{i_0 + 1} - \eta,
\end{equation}
then
\begin{equation}\label{A.9}
\begin{array}{l}
\mbox{for any $x \in \IX_{u,v}$ there exists $i_0 \in \{1, \dots , n-1\}$, $a \le 2 i_0$, $b \ge 2i_0 + 1$ in $\{1,\dots, N\}$}
\\
\mbox{such that $\min\{x^2_a, x^2_b\} \ge c_0(\eta) \, u$}.
\end{array}
\end{equation}
For $(u,v) \in  \, \stackrel{_\circ}{C}$ with $u/v \not= \mu_i$, for $1 \le i \le n$,
\begin{equation}\label{A.10}
\mbox{$\IX_{u,v}$ is a smooth, compact, connected submanifold of $\IR^N$}.
\end{equation}
\begin{equation}\label{A.11}
\mbox{For $(u,v) \in C, \mu_{u,v} (\IX_{u,v}) = 1$}.
\end{equation}
Moreover,
\begin{equation}\label{A.12}
\begin{array}{l}
\mbox{$\mu_{u,v}(dx)$ is a regular conditional probability for the law $\mu(dx)$ on $\IR^N$ given}
\\
\mbox{$|x|^2 = u$ and $|x|^2_{-1} = v$, with $(u,v) \in C$. It maps, as a kernel, continuous}
\\
\mbox{bounded functions on $\IR^N$ into continuous bounded functions on $C$}.
\end{array}
\end{equation}
(Incidentally, $\mu_{u,v}$ does not depend on the parameter $a$ in {\rm (\ref{1.5})}, but $\mu$ in {\rm (\ref{1.6})} does).
\end{lemma}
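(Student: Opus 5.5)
Lemma \ref{lemA.1} makes four assertions, and I would prove them one at a time, working in the polar coordinates $s_i = x_{2i-1}^2 + x_{2i}^2$. The map $x \mapsto (s_1,\dots,s_n)$ sends $\IX_{u,v}$ onto the polytope $P_{u,v} = \{s \in \IR_+^n;\ \Sigma s_i = u,\ \Sigma s_i/\mu_i = v\}$, and by (\ref{A.3}), (\ref{A.4}) this polytope is parametrized by $\sigma \in \IV_{u,v}$.

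\emph{Proof of (\ref{A.9}).} Write $u = \Sigma s_i$ and $\Sigma s_i(u/v - \mu_i)/\mu_i = 0$ (this uses $v = \Sigma s_i/\mu_i$). Under (\ref{A.8}) the coefficients $(u/v-\mu_i)/\mu_i$ are $\ge \eta/\mu_{i}$ for $i \le i_0$ and $\le -\eta/\mu_i$ for $i \ge i_0+1$. Both sides are therefore balanced.

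Suppose $\Sigma_{i\le i_0} s_i \le \delta u$. Then $\Sigma_{i>i_0} s_i \ge (1-\delta)u$, and the negative part of the sum is at least $\eta(1-\delta)u/\mu_n$. The positive part is at most $\lambda_N \delta u$. Choosing $\delta$ small in terms of $\eta$ gives a contradiction, so $\Sigma_{i\le i_0} s_i \ge \delta u$; symmetrically, $\Sigma_{i>i_0} s_i \ge \delta u$. Pigeonholing first over $i$ and then over the two coordinates of each pair produces indices $a \le 2i_0$ and $b \ge 2i_0+1$ with $x_a^2, x_b^2 \ge \delta u/(2n)$.

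\emph{Proof of (\ref{A.10}).} $\IX_{u,v}$ is compact, being closed and contained in a sphere. For smoothness, the gradients $2x$ and $2\Lambda^{-1}x$ must be linearly independent at every point. Dependence would force $x$ to be supported on a single eigenspace $\{\lambda_\ell = \lambda\}$, giving $u/v = \lambda \in \{\mu_i\}$, which is excluded. The implicit function theorem then gives a smooth submanifold of codimension $2$.

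For connectedness, use the projection $\IX_{u,v} \to P_{u,v}$. Its fibres are products of circles, degenerating to points where $s_i = 0$, and are therefore connected. The base $P_{u,v}$ is convex, hence connected. The projection is continuous, closed and open, being a quotient by the torus action $(\theta_i)$, so $\IX_{u,v}$ is connected: a disconnection into two closed torus-invariant pieces would project to a disconnection of the convex base.

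\emph{Proof of (\ref{A.11}).} This is immediate from (\ref{A.7}) and (\ref{A.4}), since $|x|^2 = \Sigma s_i$ and $|x|_{-1}^2 = \Sigma s_i/\mu_i$ hold identically on the image.

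\emph{Proof of (\ref{A.12}).} This is the main point. Under $\mu$, the $s_i$ are i.i.d.\ exponential with mean $a$, since each is a sum of two squares of $N(0,a/2)$ variables. The angles $\theta_i$ are i.i.d.\ uniform and independent of the $s$'s. The density of $(s_1,\dots,s_n)$ is $a^{-n}e^{-\Sigma s_i/a}$, which is constant on each level set $\{\Sigma s_i = u\}$. Change variables $(s_1,\dots,s_n) \mapsto (u,v,\sigma)$ via (\ref{A.3}), (\ref{A.4}); this is linear with a constant Jacobian. The conditional law of $\sigma$ given $(u,v)$ is then uniform on $\IV_{u,v}$, which is $\wh{\mu}_{u,v}$ for $(u,v)$ in the interior. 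Independence of the angles then yields (\ref{A.7}). The same computation shows that $a$ drops out.

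For the continuity of the kernel, I would argue as follows. $V(u,v)$ is continuous and positive on $\stackrel{_\circ}{C}$, and the polytopes $\IV_{u,v}$ vary continuously in Hausdorff distance. Hence $\int f \, d\mu_{u,v}$ is continuous on the interior for bounded continuous $f$.

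The step I expect to be the main obstacle is continuity at the boundary rays $u = v$ and $u = \lambda_N v$. There I would use homogeneity together with the fact that $\IV_{u,v}$ shrinks in Hausdorff distance to the point given in (\ref{A.6}). This is forced by the constraints $\ell_1(\sigma) \le u - v \to 0$, respectively by $\ell_2$ near the other ray. Uniform continuity of $f$ on compact sets then gives convergence of $\int f \, d\mu_{u,v}$ to the Dirac-type limit.
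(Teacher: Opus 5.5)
Your proposal is correct and follows essentially the paper's route. You use the pair variables $s_i$ and the balancing identity for (\ref{A.9}); for (\ref{A.10}) you use full rank of the differential of $(|\cdot|^2,|\cdot|^2_{-1})$ together with the torus fibration over a convex polytope, with your closed-map/connected-fibre argument replacing the paper's lifting of a linear path and avoiding the need to lift through degenerate circles; and for (\ref{A.12}) you use the exponential/uniform-angle representation of $\mu$, a constant-Jacobian linear change of variables, and Hausdorff shrinking of $\IV_{u,v}$ at $\partial C$.

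One small correction: near $u=\lambda_N v$ the $\ell_2$-constraint alone does not make $\IV_{u,v}$ shrink, since together with $\sigma\ge 0$ it only bounds a weighted sum from below. Argue instead from $\mu_n v-u=\sum_{i<n}(\mu_n/\mu_i-1)\,s_i$ with all $s_i\ge 0$, which forces $s_i\to 0$ for $i<n$.
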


\begin{proof}
We begin with the proof of (\ref{A.9}) when assuming (\ref{A.8}). Writing $s_i = x^2_{2i-1} + x^2_{2i}$ for $1 \le i \le n$, we have by the first inequality of (\ref{A.8})
\begin{equation}\label{A.13}
u = s_1 + \dots + s_n \ge (\mu_{i_0} + \eta) (s_1 + s_2/ \mu_2 + \dots + s_n/\mu_n).
\end{equation}
Setting $j_0 = i_0 + 1$, we find that
\begin{equation}\label{A.14}
\begin{array}{l}
s_{j_0} \big(1 - (\mu_{i_0} + \eta)/ \mu_{j_0}\big) + \dots + s_n \big(1- (\mu_{i_0} + \eta)/ \mu_n\big) \ge
\\ 
(\mu_{i_0} + \eta-1)s_1 + ((\mu_{i_0} + \eta)/\mu_2 -1) s_2+ \dots + ((\mu_{i_0} + \eta)/\mu_{i_0} -1 ) s_{i_0},
\end{array}
\end{equation}
and hence by (\ref{A.8})
\begin{equation}\label{A.15}
s_{j_0} + \dots + s_n \ge c\,(\eta) (s_1 + \dots + s_n) = c\,(\eta) \,u.
\end{equation}
Likewise, by the second inequality of (\ref{A.8}), we also have (recall that $j_0 = i_0 + 1$) $s_1 + \dots + s_n \le (\mu_{j_0} - \eta) (s_1 + s_2/\mu_2 + \dots + s_n/\mu_n)$ so that
\begin{equation}\label{A.16}
\begin{array}{l}
s_{j_0} \big(1 - (\mu_{j_0} - \eta)/\mu_{j_0}\big) + \dots + s_n \big(1- (\mu_{j_0} - \eta)/ \mu_n)\big) \le 
\\
s_1 (\mu_{j_0} - \eta - 1) \;+ s_2 \big((\mu_{j_0} - \eta) / \mu_2 - 1\big) + \dots + s_{i_0} \big((\mu_{j_0} - \eta) / \mu_{i_0} - 1\big)\big)
\end{array}
\end{equation}
and hence (\ref{A.8})
\begin{equation}\label{A.17}
s_1 + \dots + s_{i_0} \ge c\,(\eta) (s_1 + \dots + s_n) = c\, (\eta) \, u.
\end{equation}
Combining (\ref{A.15}) and (\ref{A.17}), and recalling the definition of $s_i$ above (\ref{A.13}), the claim (\ref{A.9}) follows.

\medskip
We now turn to the proof of (\ref{A.10}). Since $u/v \not= \mu_i$ for $1 \le i \le n$, we know by (\ref{A.9}) that for any $x$ in $\IX_{u,v}$ we can find two components $x_\ell, x_{\ell^\prime}$ different from zero and with $\lambda_\ell \not= \lambda_{\ell^\prime}$. Hence, $x$ and $\Lambda^{-1} x$ are not colinear (see below (\ref{1.13}) for the definition of $\Lambda$), and the differential at $x$ of the $\IR^2$-valued map $(| \cdot |^2, | \cdot |^2_{-1})$ has full rank. It follows that $\IX_{u,v}$ is a smooth compact submanifold of $\IR^N$ (see for instance \cite{Warn83} on p.~31). There remains to prove that it is connected. To this effect we set for $x \in \IR^N$,
\begin{equation}\label{A.18}
s_i(x) = x^2_{2i-1} + x^2_{2i}, 1 \le i \le n, \; \mbox{and} \; \sigma(x) = \big(s_i(x)\big)_{3 \le i \le n} \in \IR_+^\Sigma,
\end{equation}
so that
\begin{equation}\label{A.19}
\begin{split}
\IX_{u,v} = \big\{x \in \IR^N; &\; \sigma(x) \in \IV_{u,v}, \mu_2(1 - 1/\mu_2) \,s_1(x) = -u + \mu_2 v + \ell_2 \big(\sigma(x)\big),
\\
&\;\mbox{and} \; (1 - 1/\mu_2) \, s_2(x) = u - \mu_1 v - \ell_1\big(\sigma(x)\big)\big\}.
\end{split}
\end{equation}
Now, any two points of the polytope $\IV_{u,v}$ can be joined by a linear segment within $\IV_{u,v}$, and the inverse image in $\IR^N$ under $\sigma$ of a point in $\IV_{u,v}$ is a product of $n$ (possibly degenerate) circles. One can then ``lift a linear path'' linking the images under $\sigma$ of two points in $\IX_{u,v}$ and construct a continuous path in $\IX_{u,v}$ between these two points. The claim (\ref{A.10}) follows.

\medskip
The claim (\ref{A.11}) readily follows from (\ref{A.6}) and (\ref{A.4}).

\medskip
As for (\ref{A.12}), we first note that when $S_1,\dots,S_n, \Theta_1, \dots, \Theta_n$ are independent random variables with the $S_i, 1 \le i \le n$, having exponential distribution with parameter $1/a$, and the $\Theta_i, 1 \le i \le n$, being uniformly distributed over $[0, 2 \pi)$, then the random vector $(S_1^{1/2} \cos \Theta_1, S_1^{1/2} \sin \Theta_1, \dots, S^{1/2}_n \cos \Theta_n$, $S_n^{1/2} \sin \Theta_n)$ has distribution $\mu$. If $\Phi$ is a bounded measurable function on $R^{N+2}$, setting $U = S_1 + \dots + S_n$, $V = S_1 + S_2 / \mu_2 + \dots + S_n / \mu_n$, one has with hopefully obvious notation, and a similar change of variables as in (\ref{A.3}) of \cite{SzniWidm26a}:
\begin{equation}\label{A.20}
\begin{array}{l}
E^\mu[\Phi(|x|^2, |x|^2_{-1}, x_1,x_2, \dots,x_{N-1},x_N)] = 
\\[1ex]
E[\Phi(U, V, S_1^{1/2} \cos \Theta_1, S_1^{1/2} \sin \Theta_1, \dots , S_n^{1/2} \cos \Theta_n, S^{1/2}_n \sin \Theta_n)] = 
\\[0.5ex]
\dis\int \Phi (u,v, \sqrt{s}_1\, \cos \theta_1, \sqrt{s}_1 \sin \theta_1, \dots, \sqrt{s}_n \, \cos \theta_n, \sqrt{s}_n \sin \theta_n) \; 1\{(u,v) \in 
\\[0.5ex]
C,(s_3, \dots, s_n) \in \IV_{u,v}, \theta_i \in [0, 2 \pi), 1 \le i \le n\} 
\\[0.5ex]
 \mbox{\f $\dis\frac{1}{(2 \pi)^n}$} \; \mbox{\f $\dis\frac{e^{-u/a}}{a^n(1 - \mu_2^{-1})}$} \;du \,dv\,ds_3 \dots ds_n \ d \theta_1 \dots d \theta_n \stackrel{(\ref{A.7})}{=}
\\[2ex]
E^\mu \Big[ \dis\int \Phi (|x|^2, |x|^2_{-1}, z) \, d\mu_{|x|^2,|x|^2_{-1}}(z)\Big],
\end{array}
\end{equation}
and combined with (\ref{A.11}), $\mu_{u,v}(dz)$ is a regular conditional probability for the law $\mu$ given $|x|^2 = u$ and $|x|^2_{-1}  = v$. As for the fact it maps continuous bounded functions on $\IR^N$ into continuous bounded functions on $C$, this readily follows from (\ref{A.7}) and a similar property for $\wh{\mu}_{u,v}$. This last point follows from the fact that $(u,v) \in \, \stackrel{_\circ}{C} \;\r 1_{\IV_{u,v}}(\sigma) \,d \sigma \in L^1(d\sigma)$ is continuous, see for instance (\ref{A.11}) of \cite{SzniWidm26a}, and from the fact that when $(u_m,v_m)$ in $C$ converges to $(u_*,v_*) \in \partial C$, then the Hausdorff distance of $\IV_{u_m, v_m}$ to $0 \in \IR^\Sigma$, when $u_* = v_*$, or to $(0, \dots,u_*) \in \IR^\Sigma$, when $u_* = \lambda_N v_*$, tends to zero with $m$, and from the continuity of the functions $s_1,s_2$ in (\ref{A.3}) (extended to $C \times \IR^\Sigma$).

\medskip
This completes the proof of (\ref{A.12}) and hence of Lemma \ref{lemA.1}. 
\end{proof}

\begin{remark}\label{remA.2} \rm
We highlight the following significant consequence of (\ref{A.12}) and (\ref{2.9}), which in particular is used in Section 5, see  (\ref{5.25}):
\begin{equation}\label{A.21}
q_\ell (u,v) = \dis\int x^2_\ell \, d\mu_{u,v} (x), \; \mbox{for $(u,v) \in C$ and $1 \le \ell \le N$}.
\end{equation}
Indeed, replacing $u,v$ by $|x|^2$ and $|x|^2_{-1}$ in both members, one obtains $E^\mu [x^2_\ell \, | \, |x|^2, |x|^2_{-1}]$ by (\ref{2.9}) and (\ref{A.12}). This implies the equality of the two members of (\ref{2.9}) for a.e.~$(u,v)$ in $C$, and hence for all $(u,v)$ in $C$ by continuity. \hfill $\square$
\end{remark}

Next, we will show that the collection of vector fields $Z_m, 1 \le m \le M$, see (\ref{1.16}), is rich enough to ensure an efficient stirring on $\IX_{u,v}$, when $(u,v)$ in $C$ stays away from the singular set where $u = \mu_i v$ for some $i$. We refer to (\ref{1.14}), (\ref{1.15}) for notation.

\begin{proposition}\label{propA.3} {\it (full rank property)}

\medskip\n
Given $\eta > 0$ and $(u,v)$ in $\stackrel{_\circ}{C}$ such that
\begin{equation}\label{A.22}
\mbox{$u/v$ is at distance at least $\eta$ from each $\mu_i, 1 \le i \le n$},
\end{equation}
then, for $x \in \IX_{u,v}$ and $z$ in $\IR^N$ with $\langle z,x\rangle = 0 = \langle z,x\rangle_{-1}$, one has
\begin{equation}\label{A.23}
\Sigma_{J \in \cJ} \; \mbox{\f $\dis\frac{1}{|x|^4}$} \; \langle T_J (x), z \rangle^2 + \Sigma^n_1 \;  \mbox{\f $\dis\frac{1}{|x|^2}$} \; \langle R_i(x), z\rangle^2 \ge c_1(\eta) \, |z|^2.
\end{equation}
\end{proposition}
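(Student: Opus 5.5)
The plan is to reduce by homogeneity to a compact set of $x$, prove pointwise that the vectors $T_J(x)$, $R_i(x)$ span the tangent space $E_x$ to $\IX_{u,v}$ at $x$, and then conclude by continuity and compactness.

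\emph{Reduction to a compact set.} The $T_J$ are quadratic and the $R_i$ linear in $x$. Hence the left member of (\ref{A.23}), with its normalisations $|x|^{-4}$ and $|x|^{-2}$, is unchanged when $x$ is replaced by $tx$, $t>0$. The condition (\ref{A.22}) and the constraint on $z$ are scale invariant as well. So we may assume $|x|=1$, and $x$ then ranges over the compact set
\begin{equation*}
K_\eta=\big\{x;\ |x|=1,\ \big|\,|x|^2/|x|^2_{-1}-\mu_i\big|\ge\eta \ \mbox{for } 1\le i\le n\big\}.
\end{equation*}
For $x\in K_\eta$, let $E_x=\{z;\ \langle z,x\rangle=0=\langle z,\Lambda^{-1}x\rangle\}$. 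By (\ref{A.9}), $x$ has two non-zero coordinates with different $\lambda$'s, so $x$ and $\Lambda^{-1}x$ are independent. By compactness their Gram determinant is bounded below on $K_\eta$. Consequently the orthogonal projection onto $E_x$ depends continuously on $x\in K_\eta$.

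Let $Q_x(z)$ denote the left member of (\ref{A.23}), a quadratic form in $z$ with coefficients continuous in $x$. Then
\begin{equation*}
x\mapsto\min\{Q_x(z);\ z\in E_x,\ |z|=1\}
\end{equation*}
is continuous on $K_\eta$. It therefore suffices to show that it is positive at each point, i.e.\ that $Q_x$ is positive definite on $E_x$. Equivalently, the only $z\in E_x$ orthogonal to all $T_J(x)$ and all $R_i(x)$ is $z=0$.

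\emph{Pointwise spanning.} Fix $x\in K_\eta$ and such a $z$. Set $s_i=x_{2i-1}^2+x_{2i}^2$, and call the pair $i$ active if $s_i>0$.

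On an active pair, decompose $(z_{2i-1},z_{2i})=\beta_i(x_{2i-1},x_{2i})+\alpha_i(x_{2i},-x_{2i-1})$. Since $\langle R_i(x),z\rangle=\alpha_i s_i$, orthogonality to $R_i(x)$ forces $\alpha_i=0$. On inactive pairs $z$ is a priori arbitrary.

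Now take $J=(k,\ell,m)$ with one coordinate chosen (with non-zero value) in each of three distinct active pairs. Then
\begin{equation*}
\langle T_J(x),z\rangle=x_kx_\ell x_m\,\Sigma_{\rm cyc}\Big(\tfrac1{\lambda_a}-\tfrac1{\lambda_b}\Big)\beta_c .
\end{equation*}
Its vanishing says exactly that $\beta$ restricted to the three pairs is an affine function of $1/\mu$. Hence over all active pairs $\beta_i=A+B/\mu_i$. The constraints $z\in E_x$ read $\Sigma\beta_is_i=0$ and $\Sigma\beta_is_i/\mu_i=0$. Combining them gives
\begin{equation*}
\Sigma_i s_i(A+B/\mu_i)^2=0,
\end{equation*}
so $\beta\equiv0$.

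If only two pairs are active (there are at least two, one on each side of $u/v$, by (\ref{A.9})), the two constraints form a $2\times2$ system in $(\beta_i,\beta_j)$ with determinant $s_is_j(1/\mu_j-1/\mu_i)\neq0$, so again $\beta=0$.

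Finally, for an inactive coordinate $c$, pick $a,b$ in two active pairs with $x_a,x_b\neq0$ and $\lambda_a\neq\lambda_b$, and let $J$ be the ordering of $\{a,b,c\}$. Since $x_c=0$, only one term of $\langle T_J(x),z\rangle$ survives:
\begin{equation*}
\langle T_J(x),z\rangle=\pm\, x_ax_b\Big(\tfrac1{\lambda_a}-\tfrac1{\lambda_b}\Big)z_c .
\end{equation*}
This forces $z_c=0$, using that $\lambda_c$ differs from $\lambda_a$ and $\lambda_b$. Indeed, if $c$ lies in the same eigenpair as $a$, that pair would be active.

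\emph{Main obstacle.} The delicate point is the bookkeeping in the spanning step. One must make sure that the triples used really belong to $\cJ$, i.e.\ have distinct $\lambda$'s, and handle the case of few active pairs. Both are guaranteed by (\ref{A.9}), which yields active pairs on both sides of $u/v$. The uniformity in $x$, giving the constant $c_1(\eta)$, then follows from the continuity and compactness argument above.
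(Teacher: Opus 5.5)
Your proof is correct. Its architecture is the paper's: scale to $|x|=1$, show pointwise that the $T_J(x)$, $R_i(x)$ span $E_x$, then use compactness. The difference is in the pointwise linear algebra.

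The paper fixes one anchor pair $a\le 2i_0<b$ with $x_ax_b\neq0$ from (\ref{A.9}). It uses only the $N-4$ triples $\{a,b,\ell\}$ with $\ell$ outside the eigenpairs of $a$ and $b$, plus the two rotations of those eigenpairs. The observation you reserve for inactive coordinates in fact holds for every such $\ell$, active or not. The vector $T_{\{a,b,\ell\}}(x)$ has $\ell$-component $\pm x_ax_b(\frac{1}{\lambda_a}-\frac{1}{\lambda_b})\neq0$, and all its other components lie in $\{a,b\}$. The two rotations have non-zero components at the partners of $a$ and $b$. This triangular structure gives $N-2$ independent vectors in the $(N-2)$-dimensional $E_x$. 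There is no active/inactive case split, no collinearity argument, and no separate two-pair case.

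Your annihilator argument is longer but more informative. Before $z\in E_x$ is used, it in effect shows that every $z$ orthogonal to all $T_J(x)$, $R_i(x)$ has the form $Ax+B\Lambda^{-1}x$. So the orthogonal complement of the span is exactly the normal space. Your identity $\Sigma_i s_i(A+B/\mu_i)^2=0$ is then just $|z|^2=A\langle z,x\rangle+B\langle z,x\rangle_{-1}=0$.

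In the compactness step, the paper works directly with the compact set of pairs $(x,z)$ with $|x|=|z|=1$ and $z\in E_x$, on which the left member of (\ref{A.23}) is continuous and positive. This makes your continuity of the projection onto $E_x$, and the Gram-determinant bound, unnecessary. Only lower semicontinuity of your minimum is needed, and that follows from the closedness of this set.
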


\begin{proof}
We begin with some reductions. By homogeneity, see (\ref{1.14}), (\ref{1.15}), we can assume that $|x|^2 = u = 1$ and $|z|^2 = 1$. With the help of (\ref{A.8}), (\ref{A.9}) we can also assume that $i_0$ in $\{1, \dots, n-1\}, a \le 2i_0, b \ge 2 i_0 + 1$ are fixed and that $\min \{x^2_a,x^2_b\} \ge c_0(\eta)$.

\medskip
We now choose a sequence $\ell_1,\dots,\ell_{N-4}$ of distinct integers in $\{1, \dots , N\}$ such that the values $\lambda_{\ell_1}, \lambda_{\ell_2}, \dots, \lambda_{\ell_{N-4}}$ are all different from $\lambda_a$ and $\lambda_b$. We denote by $J_1, J_2, \dots, J_{N-4}$ the triples that are the increasing orderings of the sets $\{a,b,\ell_1\}, \{a,b,\ell_2\}, \dots , \{a,b,\ell_{N-4}\}$. Letting $T_{J_1}(x)$ act by scalar product (i.e. $\langle \cdot, \cdot \rangle$) on vectors in $\IR^N$ with vanishing $a,b$ components, we see that ${\rm Span} \{T_J(x)\}$ has dimension $1$. Then, by letting $T_{J_1}(x), T_{J_2}(x)$ act on vectors in $\IR^N$ with vanishing $a,b,\ell_1$ components, we see that $T_{J_2}(x)$ is not colinear to $T_{J_1}(x)$ and ${\rm Span}\{T_{J_1}(x),T_{J_2}(x)\}$ has dimension 2. And by induction, letting $T_{J_1}(x), \dots, T_{J_{k+1}}(x)$ act on vectors with vanishing $a,b,\ell_1,\dots,\ell_k$-components, we see that ${\rm Span}\{T_{J_1}(x), \dots, T_{J_{k+1}}(x)\}$ has dimension $k + 1$ as long as $k + 1 \le N-4$. Adding successively $R_a(x)$ and $R_b(x)$ to this collection further increases the dimension by $2$ (using a similar argument). As a result we find that ${\rm Span}\{T_J(x), \dots, T_{J_{N-4}}(x)$, $R_a(x), R_b(x)\}$ which is contained in $\{y \in \IR^N; \langle y,x\rangle = 0 = \langle y,x\rangle_{-1}\}$ by (\ref{1.17}) has dimension $N-2$ and hence coincides with $\{y \in \IR^N; \langle y, x\rangle = 0 = \langle y,x\rangle_{-1}\}$. This shows that the left member of (\ref{A.23}) is positive when $x,z$ are such that $|x|^2 = 1, |x|^2 / |x|^2_{-1}$ is at distance at least $\eta$ from each $\mu_i, 1 \le i \le n$, $|z|^2 = 1, \langle x, z\rangle = 0 = \langle x,z\rangle_{-1}$. This is a compact set, and the left member of (\ref{A.22}) is a continuous positive function on this compact set, and thus admits a positive lower bound $c_1(\eta)$. The claim (\ref{A.23}) follows. 
\end{proof}

We conclude this appendix with a result concerning local charts, which will be helpful when proving the convergence to equilibrium of the diffusion on $\IX_{u,v}$ induced by drift and stirring in Appendix B.

\begin{lemma}\label{lemA.4} (local charts)

\medskip\n
Given $i_0 \in \{2,\dots,n\}$, $(u^0, v^0) \in \Delta_{i_0}$ (see {\rm (\ref{2.5})} for notation), $x^0 \in \IX_{u^0,v^0}, a, b$ in $\{1,\dots,N\}$ with $\lambda_a \not= \lambda_b$ and $x^0_a \, x^0_b \not= 0$, one can find an open Euclidean ball $B^0$ centered at $(x^0_\ell)_{\ell \not= a,b}$ in $\IR^{\{1,\dots,N\} \backslash \{a,b\}}$, an open rectangle $R^0$ centered at $(u^0,v^0)$ with closure contained in $\Delta_{i_0}$, a rectangle $Q^0$ in $R^{\{a,b\}}$, product of two open intervals bounded away from zero and respectively centered at $x^0_a$ and $x^0_b$, such that 
\begin{equation}\label{A.24}
\begin{array}{l}
\mbox{for $(x_\ell)_{\ell \not= a,b}$ in $B^0, (x_a,x_b)$ in $Q^0$, $(u,v)$ in $R^0$, the set where $x \in \IX_{u,v}$}
\\
\mbox{coincides with the graph of a smooth $Q^0$-valued function $F$ on $B^0 \times R^0$.}
\\
\mbox{And, for $(u,v)$ in $R^0$, $\mu_{u,v}$ restricted to the set where $(x_\ell)_{\ell \not= a,b} \in B^0$,}
\\
\mbox{$(x_a,x_b) \in Q^0$, is bounded above and below by constant multiples of the}
\\
\mbox{image of the Lebesgue measure on $B^0$ under the ``graph map'' of $F(\cdot,(u,v))$.}
\end{array}
\end{equation}
\end{lemma}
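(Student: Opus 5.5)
The plan is to obtain the graph description from the implicit function theorem, applied to the map $\Phi(x) = (|x|^2, |x|^2_{-1})$ after splitting the coordinates as $y=(x_\ell)_{\ell\ne a,b}$ and $(x_a,x_b)$. Consider
\[
G\big(y,(x_a,x_b),(u,v)\big) = \Big(|y|^2 + x_a^2 + x_b^2 - u,\; \textstyle\sum_{\ell\ne a,b} x_\ell^2/\lambda_\ell + x_a^2/\lambda_a + x_b^2/\lambda_b - v\Big).
\]
Its partial Jacobian with respect to $(x_a,x_b)$ is
\[
\begin{pmatrix} 2x_a & 2x_b \\ 2x_a/\lambda_a & 2x_b/\lambda_b\end{pmatrix},
\]
with determinant $4x_ax_b(1/\lambda_b - 1/\lambda_a)$. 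This is nonzero at $x^0$ because $x^0_ax^0_b\ne 0$ and $\lambda_a\ne\lambda_b$. The implicit function theorem therefore gives neighborhoods and a smooth function $F$ with the following property: near $(y^0,(u^0,v^0))$, the relation $G=0$ with $(x_a,x_b)$ near $(x^0_a,x^0_b)$ is equivalent to $(x_a,x_b)=F(y,(u,v))$. I then shrink these neighborhoods to a ball $B^0$, a rectangle $R^0$ with $\overline{R^0}\subset\Delta_{i_0}$ (possible since $\Delta_{i_0}$ is open), and a product rectangle $Q^0$ of intervals centered at $x^0_a,x^0_b$. The intervals are chosen small enough that they avoid $0$, and I also require that $F$ maps $B^0\times R^0$ into $Q^0$ and that the Jacobian determinant stays bounded away from $0$ on the closures. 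This gives the first part of (\ref{A.24}).

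For the measure comparison I use Lemma \ref{lemA.1}, namely (\ref{A.12}) together with the explicit formula (\ref{A.20}). In the coordinates $(u,v,s_3,\dots,s_n,\theta_1,\dots,\theta_n)$, the measure $\mu$ has density $c\,e^{-u/a}$ (up to constants), and $\mu_{u,v}$ is the disintegration with constant density $1/((2\pi)^nV(u,v))$ in $(s_3,\dots,s_n,\theta)$. The cleanest route is to bypass these coordinates and compare directly with Lebesgue measure on $\IR^N$. Since $\mu$ has density $(\pi a)^{-N/2}e^{-|x|^2/a}$, the coarea/change-of-variables formula for the map $x\mapsto (y,\Phi(x))$ on the chart region applies: this map is a diffeomorphism onto $B^0\times R^0$ with Jacobian equal to $|\det \partial_{(x_a,x_b)}G|$. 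Hence, on the chart,
\[
\mu(dx) = (\pi a)^{-N/2}e^{-u/a}\,|\det \partial_{(x_a,x_b)}G|^{-1}\,dy\,du\,dv,
\]
evaluated at $x=(y,F(y,(u,v)))$.

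By uniqueness of disintegration, and using the continuity in (\ref{A.12}) to pass from almost every $(u,v)$ to every $(u,v)$ (both sides are continuous in $(u,v)$ when tested against continuous functions), the restriction of $\mu_{u,v}$ to the chart equals
\[
\frac{(\pi a)^{-N/2}e^{-u/a}\,|\det|^{-1}}{\rho(u,v)}
\]
times the image of $dy$ under the graph map. Here $\rho$ is the density of the law of $\Phi$ under $\mu$, which by (\ref{A.20}) equals $c\,e^{-u/a}V(u,v)$. The ratio $e^{-u/a}/\rho$ is therefore $c/V(u,v)$, and $V$ is continuous and positive on $\overline{R^0}\subset\stackrel{_\circ}{C}$. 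The factor $|\det|^{-1}$ is bounded above and below on the chart closure. This gives the two-sided bounds.

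The main obstacle is making the identification of the disintegrated density hold for \emph{every} $(u,v)\in R^0$ rather than almost every. I plan to handle it through the explicit form (\ref{A.7}): in the chart, the map from $(y,u,v)$ to the coordinates $(\sigma,\theta)$ is smooth. Indeed, since $x_a,x_b\ne0$, the corresponding $s_i$ are positive and the angles are smooth, so the density of $\mu_{u,v}$ computed from (\ref{A.7}) is an explicit continuous function of $(y,u,v)$. Both sides are then continuous, and equality everywhere follows.
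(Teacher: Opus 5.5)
Your proposal is correct and follows the paper's proof. The first part is the implicit function theorem applied to $(|x|^2-u,|x|^2_{-1}-v)$ in the variables $(x_a,x_b)$. The second part combines the change of variables $x\mapsto(y,|x|^2,|x|^2_{-1})$ with the continuity of the kernel in (\ref{A.12}), which identifies $\mu_{u,v}$ on the chart for every $(u,v)\in R^0$. The paper phrases this as a limit of conditionings on shrinking $\rho$-boxes around $(u,v)$, which is the same argument as your a.e.-plus-continuity step. Your explicit density $c\,e^{-u/a}V(u,v)$ for the law of $(|x|^2,|x|^2_{-1})$, taken from (\ref{A.20}), makes the uniform two-sided bounds transparent. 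Your last paragraph, which goes through the coordinates of (\ref{A.7}), is not needed, because the continuity argument already covers every $(u,v)$.
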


\begin{proof}
We view the $\IR^2$-valued smooth function $(|x|^2 - u, |x|^2_{-1} - v)$ on $\IR^{N+2}$ as a function $\varphi(X,Y)$ of the variables $X = ((x_\ell)_{\ell \not= a,b},u,v) \in \IR^N$ and $Y = (x_a,x_b) \in \IR^2$. The determinant of $\frac{\partial \varphi}{\partial Y}$ equals $2 (\frac{1}{\lambda_a} - \frac{1}{\lambda_b})\, x_a \, x_b$ and does not vanish near $(X^0,Y^0)$ (corresponding to $x^0, (u^0,v^0)$). The first part of the claim follows from the application of the Implicit Function Theorem, see Th.~9.15-1 on p.~818 of \cite{Ciar25}. For the second part, one uses the change of variable formula, and the approximation for $(u,v)$ in $R^0$ of $\mu_{u,v}$ restricted to the set where $(x_\ell)_{\ell \not= a,b} \in B^0$, $(x_a,x_b) \in Q^0$ by the conditional measure $\mu(\,\cdot\,\, |\,|\, |x|^2 - u| < \rho, | \, |x|^2_{-1} - v| < \rho)$ restricted to the set $(x_\ell)_{\ell \not= a,b} \in B^0$, $(x_a,x_b) \in Q^0$, as $\rho \r 0$. (Incidentally, one can set the parameter $a$ in (\ref{1.5}) governing the variance of $\mu$ equal to $1$ for this step, so that $a$ does not enter the constants in the statement of Lemma \ref{lemA.4}). 
\end{proof}

\section{Appendix: Convergence to equilibrium on $\IX_{u,v}$}
\setcounter{equation}{0}

In this appendix the main objective is to establish a quantitative convergence to equilibrium on $\IX_{u,v}$ for the semigroup generated by $B + \kappa \cD$, see (\ref{1.10}), (\ref{1.19}), (\ref{1.20}), when $(u,v)$ remains away from the singular rays where $u = \mu_i v$, as well as bounded and bounded away from $0$, see Proposition \ref{propB.2}. Throughout this appendix positive constants will tacitly depend on $N, \lambda_\ell, 1 \le \ell \le N$, the coefficients entering the definition of $B$ in (\ref{1.10}), and $\kappa$ in (\ref{1.20}).

\medskip
In the next lemma $Z$ stands for one of the vector fields in (\ref{1.10}), (\ref{1.16}).

\begin{lemma}\label{lemB.1}
For $\wt{f}, \wt{g}$ smooth functions on $\IR^N$ growing at most polynomially together with their derivatives and $Z$ as above, one has
\begin{equation}\label{B.1}
\dis\int Z \wt{f} \; \wt{g} \, d\mu = - \dis \int \, \wt{f} \; Z\, \wt{g} \; d\mu .
\end{equation}
If $(u,v) \in \, \stackrel{_\circ}{C}$ and $u/v \not= \mu_i$, for $1 \le i \le n$, then for $f,g$ smooth functions on $\IX_{u,v}$ (see {\rm \ref{A.10})}) and $Z$ as above (and thus tangent vector field to $\IX_{u,v})$, one has
\begin{equation}\label{B.2}
\dis\int Z f \; g \, d\mu_{u,v} = - \dis \int \, f \; Z g \; d\mu_{u,v} .
\end{equation}
\end{lemma}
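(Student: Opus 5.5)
For (\ref{B.1}) I will write $Z = \Sigma_\ell Z_\ell(x)\,\partial_\ell$ and integrate by parts in $\IR^N$ against the Gaussian density $(\pi a)^{-N/2} e^{-|x|^2/a}$. Since $\wt{f}\,\wt{g}$ and $Z_\ell$ grow at most polynomially while the density decays like $e^{-|x|^2/a}$, there are no boundary terms. This gives
\[
\int Z\wt{f}\;\wt{g}\,d\mu = -\int \wt{f}\;Z\wt{g}\,d\mu - \int \wt{f}\,\wt{g}\,\big({\rm div}\,Z\big)\,d\mu + \frac{1}{a}\int \wt{f}\,\wt{g}\,Z(|x|^2)\,d\mu .
\]
By (\ref{1.11}) and (\ref{1.17}) we have ${\rm div}\, Z = 0$ and $Z(|x|^2) = 2\langle x, Z(x)\rangle = 0$, so the last two terms vanish and (\ref{B.1}) follows. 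This is exactly the mechanism behind (\ref{1.35}).

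For (\ref{B.2}) I will disintegrate $\mu$ along $(|x|^2,|x|^2_{-1})$, using that $\mu_{u,v}$ is a regular conditional probability (\ref{A.12}). Fix $(u_0,v_0)$ as in the statement. Given smooth $f,g$ on $\IX_{u_0,v_0}$, I extend them to smooth functions on a neighborhood of $\IX_{u_0,v_0}$. Since $(|\cdot|^2,|\cdot|^2_{-1})$ is a submersion near the compact manifold $\IX_{u_0,v_0}$ (by (\ref{A.10}) and its proof), a tubular neighborhood argument shows that nearby fibres $\IX_{u,v}$ are diffeomorphic to $\IX_{u_0,v_0}$ via a flow transversal to the fibres. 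I transport $f,g$ along this flow to get $\wt f,\wt g$ defined on $\{x: (|x|^2,|x|^2_{-1}) \in R\}$ for a small rectangle $R$ around $(u_0,v_0)$, and I extend them by zero using a cutoff.

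I then test (\ref{B.1}) with $\wt{f}\,\psi(|x|^2,|x|^2_{-1})$ and $\wt g$, where $\psi$ is an arbitrary smooth function compactly supported in $R$. By (\ref{1.18}), $Z(\wt f\psi) = \psi\, Z\wt f$, so
\[
\int \psi(|x|^2,|x|^2_{-1})\big(Z\wt f\,\wt g + \wt f\, Z\wt g\big)\,d\mu = 0 .
\]
Disintegrating via (\ref{A.20}) turns this into $\int \psi(u,v)\,\Phi(u,v)\,\rho(u,v)\,du\,dv = 0$, where $\Phi(u,v) = \int (Zf\,g + f\,Zg)\,d\mu_{u,v}$ and $\rho$ is the (positive on $\stackrel{_\circ}{C}$) density of the law of $(|x|^2,|x|^2_{-1})$ under $\mu$. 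Since $\psi$ is arbitrary, $\Phi = 0$ for a.e.\ $(u,v)$ near $(u_0,v_0)$.

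The last step is to check that $\Phi$ is continuous at $(u_0,v_0)$, which then gives $\Phi(u_0,v_0) = 0$, i.e.\ (\ref{B.2}). Because $Z$ is tangent to every fibre by (\ref{1.18}), $Z\wt f$ restricted to $\IX_{u_0,v_0}$ equals $Zf$. The functions $Z\wt f\,\wt g + \wt f\,Z\wt g$ are continuous and bounded on the support, and (\ref{A.12}) says the kernel $\mu_{u,v}$ maps continuous bounded functions to continuous functions of $(u,v)$; this yields continuity of $\Phi$.

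I expect the main obstacle to be justifying the smooth extensions $\wt f,\wt g$ near the fibre, together with the continuity of $\Phi$. A cleaner alternative is to work locally: use a partition of unity subordinate to the charts of Lemma \ref{lemA.4} and check that $\mu_{u,v}$ has a density that is smooth in $(x,u,v)$ in those charts, which would give continuity in $(u,v)$ directly.
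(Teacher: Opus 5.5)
Your proposal is correct and follows essentially the paper's route: (\ref{B.1}) by Gaussian integration by parts using ${\rm div}\, Z = 0$ and $\langle x, Z(x)\rangle = 0$, and (\ref{B.2}) by testing (\ref{B.1}) against $\psi(|x|^2,|x|^2_{-1})$, commuting $Z$ past it via (\ref{1.18}), disintegrating with (\ref{A.12}) and localizing at $(u,v)$. The paper localizes by letting $\psi$ concentrate rather than arguing a.e.\ plus continuity, which is equivalent; the tubular-neighborhood transport you worry about is unnecessary, since the paper simply takes any smooth compactly supported extension of $f,g$ from the closed submanifold $\IX_{u,v}$ (tangency already gives $Z\wt f = Zf$ on the fibre, as you note), and the continuity of $\Phi$ you need is exactly the last assertion of (\ref{A.12}).
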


\begin{proof}
The identity (\ref{B.1}) follows directly from the fact that the vector fields $Z$ in (\ref{1.10}), (\ref{1.16}), satisfy (\ref{1.11}), (\ref{1.17}), and from integration by parts. We then turn to (\ref{B.2}). By (\ref{A.10}), $\IX_{u,v}$ is a smooth compact submanifold of $\IR^N$, and we can extend $f,g$ as smooth compactly supported functions $\wt{f}, \wt{g}$ on $\IR^N$ (see for instance \cite{Warn83}, p.~29). If $\psi$ is a smooth non-negative compactly supported function on $\stackrel{_\circ}{C}$ with $\int \psi (|x|^2, |x|^2_{-1}) \,d\mu = 1$, we find by the application of (\ref{B.1}) to $\wt{f}$ and $\wt{g} \, h$, with $h(x) = \psi (|x|^2, |x|^2_{-1})$, that
\begin{equation}\label{B.3}
\dis\int Z \wt{f} \; \,\wt{g} \, h \, d \mu = - \dis\int \wt{f} \, Z(\wt{g} \, h) \, d \mu \stackrel{(\ref{1.18})}{=} - \dis\int \wt{f} \, Z \wt{g} \, \, h \, d\mu .
\end{equation}
Using (\ref{A.12}) and letting $\psi$ concentrate near $(u,v)$, the identity (\ref{B.2}) follows since $Z \wt{f}$, $Z \wt{g}$ respectively coincide with $Z f$ and $Zg$ on $\IX_{u,v}$. 
\end{proof}

We will now describe the diffusion process on $\IX_{u,v}$ attached to $B + \kappa \cD$ by means of the law of the solution of a Stratonovich differential (with quadratic and linear coefficients) on $\IX_{u,v}$ (see Chapter 5 of \cite{IkedWata89} for background material). With this in mind, we consider
\begin{equation}\label{B.4}
\mbox{$(u,v)$ in $\stackrel{_\circ}{C}$ with $u/v \not= \mu_i$, for all $1 \le i \le n$,}
\end{equation}
and the solution of the Stratonovich differential equation
\begin{equation}\label{B.5}
\begin{split}
d Y_t & = \sqrt{\kappa} \; \dis\sum\limits^M_{m=1} Z_m (Y_t) \circ d \beta_m (t) + b(Y_t) \, dt 
\\
Y_0 & = y \in \IX_{u,v},
\end{split}
\end{equation}
with $0 < \kappa \le 1, Z_m, 1 \le m \le M, b$ as in (\ref{1.20}), (\ref{1.16}), (\ref{1.10}), and $\beta_m, 1 \le m \le M$, independent Brownian motions. Due to (\ref{1.17}), (\ref{1.11}), the diffusion $(Y_t)_{t \ge 0}$ lives on the smooth compact manifold $\IX_{u,v}$, and its associated generator is the operator
\begin{equation}\label{B.6}
\Gamma = \kappa \cD + B.
\end{equation}
It is elliptic by (\ref{A.23}). To use duality, as in (\ref{B.2}), it is convenient to introduce the diffusion $(Y^*_t)_{t \ge 0}$ solution of the Stratonovich differential equation:
\begin{equation}\label{B.7}
\begin{split}
d Y_t^* & = \sqrt{\kappa} \; \dis\sum\limits^M_{m=1} Z_m (Y_t^*) \circ d \beta_m (t) - b(Y_t^*) \, dt 
\\
Y^*_0 & = y^* \in \IX_{u,v},
\end{split}
\end{equation}
which also lives on $\IX_{u,v}$ and has the associated generator 
\begin{equation}\label{B.8}
\Gamma^* = \kappa \cD - B.
\end{equation}
With the help of (\ref{B.2}), one has for all smooth functions $f,g$ in $\IX_{u,v}$:
\begin{equation}\label{B.9}
\dis\int \Gamma f \; g \; d\mu_{u,v} = \dis\int f \; \Gamma^* g \; d\mu_{u,v}.
\end{equation}
The operators $\Gamma$ and $\Gamma^*$ are smooth and uniformly elliptic on $\IX_{u,v}$ due to the full rank property (\ref{A.23}). Moreover, the identity (\ref{B.9}) extends to the semigroups (by performing a differentiation in $s$ of $\int e^{(t-s)\Gamma} \! f \, \,e^{s \Gamma^*} g \, \, d \mu_{u,v})$:
\begin{equation}\label{B.10}
\dis\int e^{t \Gamma}\! f \; g \, d\mu_{u,v} = \dis\int f \; e^{t \Gamma^*} \! g \, d\mu_{u,v}, \; \mbox{for $t \ge 0$}.
\end{equation}
Further, one has for $t > 0$ the transition densities:
\begin{equation}\label{B.11}
\begin{array}{l}
e^{t \Gamma} \!f(x) = \dis\int p_t (x,z)\, f(z) \,d\mu_{u,v}(z), \; e^{t \Gamma^*} \!g(y) = \dis\int p^*_t (y,z) g(z) \, d\mu_{u,v}(z)
\\
\mbox{with $p_t(x,y) = p^*_t(y,x)$ smooth in $x$ and in $y$}
\end{array}
\end{equation}
(see also Chapter 5 \S 2 and \S 3 of \cite{IkedWata89}).

\medskip
Note that by (\ref{B.10}) with $g = 1$, $\mu_{u,v}$ is a stationary distribution of $e^{t \Gamma}, t \ge 0$ (and choosing $f = 1$, it is a stationary distribution of $e^{t \Gamma^*}, t \ge 0$ as well). We now come to the main purpose of this appendix (see (\ref{B.11}) for notation, and we refer to the beginning of this appendix for our convention concerning constants).

\begin{proposition}\label{propB.2}
Consider
\begin{equation}\label{B.12}
0 < u_{\min} < u_{\max},  \,\,\eta > 0,
\end{equation}
and suppose that $(u,v) \in C$ satisfies
\begin{equation}\label{B.13}
\mbox{$u_{\min} \le u \le u_{\max}$ and $u/v$ is at distance at least $\eta$ from each $\mu_i, 1 \le i \le n$}.
\end{equation}
Then, one has positive constants $c_2(u_{\min}, u_{\max}, \eta)$ and $c_3(u_{\min}, u_{\max}, \eta)$, such that
\begin{equation}\label{B.14}
|p_t (x,y) - 1| \le c_2 \, \exp\{-c_3 \,t\}, \; \mbox{for all $t \ge 1$ and $x,y$ in $\IX_{u,v}$}.
\end{equation}
\end{proposition}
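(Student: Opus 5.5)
The plan is to use the uniform ellipticity from the full rank property (Proposition \ref{propA.3}), together with compactness in $(u,v)$, to get a uniform spectral gap in $L^2(\mu_{u,v})$. I would then convert this into pointwise bounds on $p_t$ via uniform ultracontractivity at time $1/2$, and use the Chapman--Kolmogorov identity.

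First, I reduce by homogeneity. The vector fields $T_J$ are quadratic and $R_i$ linear. Under the dilation $x\mapsto \rho x$ with $\rho^2 = u$, the space $\IX_{u,v}$ maps to $\IX_{1,v/u}$. Under this map $T_J$ becomes $\rho\, T_J$ and $R_i$ stays unchanged, and $b$ scales like $T_J$. So the generator $\Gamma$ on $\IX_{u,v}$ is conjugate to an operator on $\IX_{1,v/u}$ whose coefficients are multiplied by factors $\rho$ or $\rho^2$. These factors lie in a compact subset of $(0,\infty)$ by (\ref{B.13}). Moreover, by (\ref{B.13}) the ratio $u/v$ stays in the compact set $[1,\lambda_N]$ minus the $\eta$-neighbourhoods of the $\mu_i$, so only finitely many sectors $\Delta_i$ occur, each with a compact range of parameters.

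Next, I establish a uniform Dirichlet form bound. By (\ref{B.2}), $B$ is antisymmetric in $L^2(\mu_{u,v})$ and each $Z_m$ is antisymmetric, so
\[
-\int \Gamma f\, f\, d\mu_{u,v} = \frac{\kappa}{2}\sum_m \int (Z_m f)^2 d\mu_{u,v}.
\]
By (\ref{A.23}), this is at least $c(u_{\min},u_{\max},\eta)\,\kappa\int |\nabla_{\IX} f|^2 d\mu_{u,v}$, where $\nabla_{\IX}$ is the tangential gradient; tangent vectors are exactly those $z$ with $\langle z,x\rangle=0=\langle z,x\rangle_{-1}$.

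For the Poincaré inequality on $\IX_{u,v}$ with respect to $\mu_{u,v}$, with a constant uniform over the parameter set, I use Lemma \ref{lemA.4}. The local charts give graph parametrizations in which $\mu_{u,v}$ is comparable to Lebesgue measure, and these depend continuously on $(u,v)$. By compactness of $\{(u,v)\}\times\IX_{u,v}$ in each closed subsector, I obtain a finite atlas working uniformly in $(u,v)$. I would then patch local Poincaré inequalities on balls using connectedness (\ref{A.10}). Because the atlas and overlaps are uniform, this chaining argument yields a uniform Poincaré constant. This gives
\[
\|e^{t\Gamma}f - \textstyle\int f d\mu_{u,v}\|_{L^2(\mu_{u,v})}\le e^{-c_3 t}\|f\|_{L^2},
\]
and the same for $\Gamma^*$, since $\Gamma^*$ has the same symmetric part.

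Then I need uniform ultracontractivity, namely $\sup_{x,z}p_{1/2}(x,z)\le c$. I get this from a uniform Sobolev (Nash) inequality on $\IX_{u,v}$, again obtained through the uniform charts and the comparability of $\mu_{u,v}$ with Lebesgue measure. Combined with the Dirichlet form bound, Nash's argument gives $\|e^{t\Gamma}\|_{L^1\to L^\infty}\le c\,t^{-d/2}$ for $t\le 1$, with $d=N-2$; the same holds for $\Gamma^*$. Finally, for $t\ge1$ I write
\[
p_t(x,y)-1=\int\!\!\int p_{1/2}(x,w)\,\big(p_{t-1}(w,w')-1\big)\,p^*_{1/2}(y,w')\,d\mu_{u,v}(w)\,d\mu_{u,v}(w').
\]
This is bounded by $\|p_{1/2}(x,\cdot)\|_{L^2}\,e^{-c_3(t-1)}\,\|p^*_{1/2}(y,\cdot)\|_{L^2}$, which yields (\ref{B.14}). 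Here I use (\ref{B.11}) and the $L^2$ gap applied to mean-zero functions.

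The main obstacle is uniformity of the Poincaré and Sobolev constants in $(u,v)$. Within a sector the manifolds $\IX_{u,v}$ are diffeomorphic, but the charts degenerate as $u/v\to\mu_i$; staying $\eta$-away from the $\mu_i$ is exactly what keeps the Jacobian $x_ax_b$ bounded below via (\ref{A.9}). The uniform atlas built from Lemma \ref{lemA.4} together with a compactness argument is the step that must be carried out carefully.
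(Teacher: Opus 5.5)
Your proposal is correct, and it follows a genuinely different route from the paper's proof.

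\textbf{The paper's route.} The paper never uses an $L^2$ spectral gap. It proves two-sided pointwise bounds $c\le p_1(x,y)\le c'$ on $\IX_{u,v}$, uniformly over the parameters in (\ref{B.13}).
\begin{itemize}
\item The lower bound (\ref{B.16}) comes from small-time lower bounds on Dirichlet heat kernels in the charts of Lemma \ref{lemA.4}, obtained via classical parabolic fundamental-solution estimates and Duhamel's formula. These are chained along a path in the connected manifold $\IX_{u,v}$, with the overlap constant $\gamma$ of (\ref{B.23}).
\item This minorization is a Doeblin condition. It gives a uniform bound $\ov{d}(1)<1$ for the total-variation quantity (\ref{B.15}), and sub-multiplicativity of $\ov{d}$ then gives exponential decay.
\item The upper bound (\ref{B.17}), from the last-return decomposition (\ref{B.26}) and a count of returns, converts the total-variation decay into the pointwise estimate (\ref{B.18}).
\end{itemize}

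\textbf{Your route.} You use the structure more directly.
\begin{itemize}
\item Since $B$ is antisymmetric in $L^2(\mu_{u,v})$ by (\ref{B.2}), the Dirichlet form is $\frac{\kappa}{2}\sum_m\int (Z_m f)^2\,d\mu_{u,v}$. By (\ref{A.23}) this is comparable to the tangential energy.
\item A uniform Poincar\'e inequality then gives the $L^2$ gap.
\item A uniform Nash inequality, applied to both $\Gamma$ and $\Gamma^*$ (which share this Dirichlet form), together with duality, gives $\sup p_{1/2}\le c$.
\item Your sandwich identity via Chapman--Kolmogorov finishes the argument.
\end{itemize}

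\textbf{What each buys.} Your approach avoids pointwise heat-kernel lower bounds and the return-counting argument altogether. It only needs local Euclidean Poincar\'e and Nash inequalities in the charts, plus the comparability of $\mu_{u,v}$ with Lebesgue measure from Lemma \ref{lemA.4}. It also makes the rate explicit in terms of $\kappa$, $c_1(\eta)$, $u_{\min}$ and the Poincar\'e constant. The paper's route yields the stronger Doeblin minorization $p_1\ge c$ and needs no global functional inequality.

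The geometric input is the same in both: a finite atlas that is uniform over the compact parameter set, and connectedness (\ref{A.10}). When you patch the local Poincar\'e (or Nash) inequalities, you will need consecutive charts to overlap in sets of uniformly positive $\mu_{u,v}$-measure. That is exactly the role $\gamma$ plays in (\ref{B.23}), so this is the step to carry out carefully, as you already anticipated.
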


\begin{proof}
Consider $(u,v)$ as above and set for $\tau \ge 0$
\begin{equation}\label{B.15}
\ov{d} (\tau) = \sup\limits_{x,x^\prime \in \IX_{u,v}} \; \fr \; \dis\int  |p_\tau (x,z) - p_\tau (x^\prime, z) \, | \, d\mu_{u,v} (z)
\end{equation}
(understood as $1$ when $\tau = 0$).

\medskip
The quantity under the supremum is the so-called {\it total variation distance} of the probabilities $p_\tau(x,z)$ $d\mu_{u,v}(z)$ and $p_\tau(x^\prime,z) \, d\mu_{u,v}(z)$. The function $\tau \ge 0 \r \ov{d} (\tau)$ is known to be non-increasing and sub-multiplicative  (i.e.~$\ov{d} (\tau_1 + \tau_2) \le \ov{d}(\tau_1) \, \ov{d}(\tau_2)$), see for instance Lemma 4.12 on p.~54 of \cite{LeviPereWilm17}. As we now explain, the claim (\ref{B.14}) will follow once we show that for $u,v,\eta$, as in (\ref{B.12}), (\ref{B.13}) one has
\begin{align}
p_1(x,y) & \ge c(u_{\min},u_{\max},\eta) > 0, \; \mbox{for $x,y \in \IX_{u,v}$, and} \label{B.16}
\\[1ex]
p_1(x,y) & \le c(u_{\min},u_{\max},\eta) > 0, \; \mbox{for $x,y \in \IX_{u,v}$}. \label{B.17}
\end{align}
Indeed (and letting from now on all positive constants implicitly depend on $u_{\min},u_{\max}, \eta$ to alleviate notation), it follows from (\ref{B.16}) that $\ov{d}(1) \le e^{-c}$, so that for $t \ge 1$ and $x,y$ in $\IX_{u,v}$ (and an obvious Dirac mass meaning when $t = 1$), one has by the stationarity of $\mu_{u,v}$ and the semigroup property:
\begin{equation}\label{B.18}
\begin{array}{l}
\frac{1}{2}\; |p_t(x,y) -1 | = \Big| {\dis\int} \frac{1}{2} \big(p_{t - 1} (x,z) - p_{t-1} (x^\prime,z)\big) \, p_1(z,y) \, d \mu_{u,v}(z) \, d\mu_{u,v}(x^\prime)\Big| \le
\\
\ov{d} (t-1) \sup \, p_1(\cdot, \cdot) \le \exp \{ - c[t-1]\} \, c \le c^\prime \exp\{-c t\},
\end{array}
\end{equation}
and this will prove (\ref{B.14}).

\medskip
We thus need to show (\ref{B.16}) and (\ref{B.17}). For this purpose we use the local charts constructed in Lemma \ref{lemA.4}. The set of $x$ in $\IX_{u,v}$ and $(u,v)$ as in (\ref{B.13}) is a compact subset of $\IR^N \times (0, \infty)^2$ (it is a closed bounded subset of $\IR^N \times \IR^2$ which is contained in $\IR^N \times (0, \infty)^2$). At each point $(x^0,u^0,v^0)$ of this compact subset we have $i_0$ in $\{2,\dots,n\}$ such that $(u^0,v^0) \in \Delta_{i_0}$, and by Lemma \ref{lemA.1}, $a^0, b^0$ in $\{1,\dots, N\}$ with $\lambda_{a^0} \not= \lambda_{b^0}$ such that $x^0_{a^0} \, x^0_{b^0} \not= 0$. So we are in the set-up where Lemma \ref{lemA.4} applies. We use compactness to extract a finite collection of cardinality $K$ of points $(x^0,u^0,v^0)$ with corresponding $i_0, a^0,b^0$, together with concentric non-degenerate balls $B^0_1 \subseteq B^0_2 \subseteq B^0_3 \subseteq B^0_4$ of $\IR^{N-2}$ centered at $(x^0_\ell)_{\ell \not= a^0,b^0}$ (with triple radius of the preceding one for $B^0_4, B^0_3,B^0_2$) with $B^0_1, B^0_2, B^0_3$ closed, $B^0_4$ open, and $\ov{B}_4\,^{\!\!\!\!0}$ contained in the concentric (open) ball $B^0$ of Lemma \ref{lemA.4}, as well as rectangles $Q^0, R^0 \supseteq R^0_1$, respectively centered at $(x^0_{a^0}, x^0_{b^0})$ and $(u^0,v^0)$ with $Q^0, R^0$ open, and $R^0_1$ closed, concentric to $R^0$ and of half-size, and a smooth $Q^0$-valued function $F^0$ on $B^0 \times R^0$ such that (\ref{A.24}) holds, and the graphs of the functions $F^0$ restricted to $B^0_1 \times R^0_1$ of this finite collection covers the above compact set of $x$ in $\IX_{u,v}$ and $(u,v)$ as in (\ref{B.13}).

\medskip
For each $u_{\min}, u_{\max}, \eta$ as in (\ref{B.13}), we view the above choices as fixed. For convenience, we refer to it as the ``charts'' associated to $u_{\min},u_{\max}, \eta$ (with $K(u_{\min}, u_{\max}, \eta)$ the number of these charts), and this fixed choice of charts may enter constants below.

\medskip
Given such a chart and $(u,v) \in R^0_1$, we write
\begin{equation}\label{B.19}
\mbox{$C_1^{u,v} \subseteq C_2^{u,v} \subseteq C_3^{u,v} \subseteq O^{u,v}$ for the images of $B^0_1, B^0_2, B^0_3, B^0_4$ under $(\cdot, F^0\big( \cdot, u,v)\big)$}
\end{equation}
(so $C_1^{u,v}, C_2^{u,v}, C_3^{u,v}$ are closed and $O^{u,v}$ open in $\IX_{u,v}$), and denote by
\begin{equation}\label{B.20}
p_t^{O^{u,v}} (x,y), \; t \ge 0, \, x,y \in \IX_{u,v},
\end{equation}

\n
the Dirichlet transition densities of the diffusion in (\ref{B.5}) (with generator $\Gamma$) killed outside $O^{u,v}$ (it vanishes if $x$ or $y$ lies outside $O^{u,v}$). To alleviate notation, we tacitly omit $u,v$ from the notation in what follows.

\medskip
We now collect some useful upper and lower bounds on the above Dirichlet heat kernels. Namely, one has (with $K$ as above (\ref{B.19}))
\begin{align}
\inf \{p^O_t (x,y); & \; 1 \ge t \ge 1/K, x \in C_3, y \in C_3\} \ge c \; \;\mbox{and} \label{B.21}
\\[1ex]
\sup \{p^O_1 (x,y); & \; x \in C_2, y \in C_2\}  \vee \sup\{p_s^O (x,y), 0 < s \le 1, x \in \partial C_2, y \in C_1\} \le c^\prime  \label{B.22}
\end{align}

\n
(and $\partial C_2$ refers to the relative boundary of $C_2^{u,v} \subseteq O^{u,v}$).

\medskip
To prove these estimates, one uses the local chart to express $p_s^O(x,y)$ in terms of the fundamental solution of a second order parabolic equation, which is uniformly elliptic thanks to Proposition \ref{propA.3}, and has uniformly Lipschitz coefficients, with Dirichlet boundary conditions on $\partial B^0_4$ ($\subseteq \IR^{N-2}$). The above mentioned uniform ellipticity and Lipschitz property actually holds in a uniform neighborhood of $B^0_3$ and of $R^0_1$ (for $(u,v)$), see above (\ref{B.19}). One can thus extend the diffusion operators to the full space $\IR^{N-2}$ in a uniformly elliptic and Lipschitz fashion. Using Duhamel's formula, one can derive small time lower bound on the Dirichlet heat kernel for nearby points in a compact neighborhood of $B^0_3$ contained in $B^0_4$, with the help of the full space heat kernel estimates (see Theorem 1 on p.~67 and (4.75) on p.~82 of \cite{IlinKalaOlei62}), and then a chaining argument, to deduce (\ref{B.21}). The estimates (\ref{B.22}) are simpler to obtain; they rely on the upper bound of the full space heat kernel estimates in Theorem 1 of the above quoted reference. This proves (\ref{B.21}) and (\ref{B.22}).

\medskip
To proceed, we now introduce the positive constant (see above (\ref{B.19}))
\begin{equation}\label{B.23}
\gamma = \inf\{\mu^{u,v} (C^{u,v}_3 \cap \wt{C}^{u,v}_1); \; (u,v) \in R^0_1 \cap \wt{R}^0_1 \not= 0, \; C_2^{u,v} \cap \wt{C}^{u,v}_1 \not= \emptyset\} \in (0,1]
\end{equation}
(where the infimum refers to different charts, with the above constraints).

\medskip
As we now explain, for $(u,v)$ as in (\ref{B.13}), one has
\begin{equation}\label{B.24}
\inf \{p_1(x,y); x,y \in \IX_{u,v}\} \ge \inf\{p^O_t(z, C); \; 1 \ge t \ge 1/K, z \in C_3, \wt{z} \in C_3\}^K \, \gamma^{K-1}
\end{equation}
(the infimum in the right member runs over all charts).

\medskip
Combined with (\ref{B.21}), the lower bound (\ref{B.24}) readily implies (\ref{B.16}).

\medskip
There remains to prove (\ref{B.24}). We consider $(u,v)$ as in (\ref{B.13}) and $x,y \in \IX_{u,v}$. We will construct a sequence of distinct charts indexed with $0 \le j  < K_0$, where $K_0$ is at most $K$, such that (with hopefully obvious notation) $x \in C^{u,v}_{1,j=0}$, $y = C^{u,v}_{2,j = K_0 -1}$, and for $1 \le j < K_0$, $C^{u,v}_{2,j-1} \cap C^{u,v}_{1,j} \not= \emptyset$. Thus, if $K_0 = 1$, $p_1(x,y)$ is clearly lower bounded by the right member of (\ref{B.24}). And, if $K_0$ is bigger than $1$, one can use Chapman-Kolmogorov's identity to bound $p_1(x,y)$ from below through
\begin{equation}\label{B.25}
\begin{split}
p_1(x,y) \ge & \dis\int p_{1/K_0} ^{O_{0}}(x_0,y_1) \dots p_{1/K_0}^{O_{K_0 - 1}} (y_{K_0 - 1},y) 
\\[1ex]
& 1\{y_1 \in C_{3,0}^{u,v} \cap C^{u,v}_{1,1}, \dots , y_{K_0 - 1} \in C^{u,v}_{3, K_0 - 2} \cap C^{u,v}_{1, K_0 - 1}\} 
\\[1ex]
& d\mu^{u,v} (y_1) \dots d\mu^{u,v} (y_{K_0 - 1})
\end{split}
\end{equation}
whence (\ref{B.24}).

\medskip
To construct the above sequence of distinct charts, we note that $\IX_{u,v}$ is connected, see (\ref{A.10}), and we can find a continuous path $\pi: [0,1] \r \IX_{u,v}$ with $\pi(0) = x$ and $\pi(1) = y$. We choose a chart corresponding to $j=0$, such that $x \in C^{u,v}_{1,j = 0}$. If the path $\pi$ never leaves $C^{u,v}_{2,j=0}$, the construction stops and $K_0 = 1$. Otherwise, we consider the last time $\tau_1( < 1)$ when $\pi(\tau_1)$ belongs to $C^{u,v}_{2,j = 0}$. Then, $\pi(\tau_1)$ belongs to $C^{u,v}_{1,j=1}$ of a distinct chart, and either the path remains in $C^{u,v}_{2, j = 1}$ up to time $1$, in which case we set $K_0=2$, or there is a last time $\tau_2 (< 1)$ after which the path never belongs to $C^{u,v}_{2,j=1}$ (and $C^{u,v}_{2,j = 0}$). In this case, $\pi (\tau_2)$ belongs to $C^{u,v}_{1,j=2}$ of a chart distinct from the two preceding charts. The construction goes on, but since $K$ is the total number of charts its stops in at most $K$ steps. This completes the proof of (\ref{B.24}) (and as noted above (\ref{B.16}) follows).

\medskip
There now remains to prove (\ref{B.17}). We consider the canonical diffusion on $\IX_{u,v}$ attached to $\Gamma$ in (\ref{B.6}) with starting point $x$ in $\IX_{u,v}$, and write $P_x$ and $E_x$ for the corresponding law and expectation, as well as $(X_t)_{t \ge 0}$ for the canonical process. Then, for $y \in \IX_{u,v}$, there is a chart such that $y \in C_1^{u,v}$. We write $0 \le R_0 \le D_0 \le R_1 \le D_1 \le \dots$ for the successive times of return to $C^{u,v}_2$ and departure from $O^{u,v}$ (see (\ref{B.19}) for notation). Through the consideration of the last return to $C^{u,v}_2$ before time $1$, one has (see (\ref{B.20}) for notation)
\begin{equation}\label{B.26}
p_1(x,y) = \Sigma_{n \ge 0} \,E_x [R_n < 1, p_{1 - R_n}^{O^{u,v}} (X_{R_n},y)].
\end{equation}
(The identity is first proved in integrated form with a ``bump function'' supported near $y$. The argument below can be used to show that the right member of (\ref{B.26}) is a continuous function of $y$ and (\ref{B.26}) follows by letting the bump function concentrate at $y$.)

\medskip
Note that for $n \ge 1$, $X_{R_n}$ belongs to $\partial C_2^{u,v}$ and that $X_{R_0}$ either equals $x \in C_2^{u,v}$ or belongs to $\partial C_2^{u,v}$. As a result of (\ref{B.26}), we thus find that
\begin{equation}\label{B.27}
\begin{split}
p_1(x,y) & \le E_x [\Sigma_{n \ge 0}\, 1\{R_n < 1\}] \, \max\{p_1^{O^{u,v}}(x,y), \sup\limits_{0 < s \le 1, x^\prime \in \partial C_2^{u,v}} p_s^{O^{u,v}}(x^\prime,y)\}
\\[-2ex]
&\hspace{-2ex} \stackrel{\rm (\ref{B.22})}{\le} c^\prime \, E_x [\Sigma_{n \ge 0} \, 1\{R_n < 1\}].
\end{split}
\end{equation}
Bounding the drift and diffusion matrix of the diffusion with generator $\Gamma$, one can choose a positive integer $L_0 (u_{\min},u_{\max},\eta)$ (we refer to the beginning of the appendix and above (\ref{B.19}) for our convention about constants) such that uniformly in $z \in C_2^{u,v}$ the probability for $X$ starting at $z$ to remain in $O^{u,v}$ up to time $1/L_0$ is at least $1/2$. It then follows that for any $z \in C^{u,v}_2$, using the strong Markov property at the first return to $C^{u,v}_2$ after time $\frac{k}{L_0}$ in the second line, one has
\begin{equation}\label{B.28}
\begin{array}{l}
E_z [\Sigma_{n \ge 0} \,1\{R_n < 1\}] = \sum\limits_{0 \le k < L_0} E_z \Big[\sum\limits_{n \ge 0} 1 \Big\{ \mbox{\f $\dis\frac{k}{L_0}$} \le R_n < \mbox{\f $\dis\frac{k+1}{L_0}$}\Big\}\Big] \le
\\[2ex]
L_0 \sup\limits_{z^\prime \in C_2^{u,v}} E_{z^\prime} \Big[ \sum\limits_{n \ge 0} 1 \Big\{R_n < \mbox{\f $\dis\frac{1}{L_0}$}\Big\}\Big] \le L_0 \sum\limits_{n \ge 0} \; \mbox{\f $\dis\frac{1}{2^n}$} = 2 L_0.
\end{array}
\end{equation}
Bounding the last expectation of (\ref{B.27}) by $2L_0$ thus concludes the proof of (\ref{B.17}), and hence of Proposition \ref{propB.2}.
\end{proof}

The above Proposition \ref{propB.2} will be especially useful in Section 5, see below (\ref{5.25}) in the proof of Theorem \ref{theo5.1}. 
\end{appendix}

\end{document}